\newcommand{\ubar}[1]{\underaccent{\bar}{#1}}
\newcounter{RomanNumber}
\newcommand{\MyRoman}[1]{\setcounter{RomanNumber}{#1}\Roman{RomanNumber}}
\newtheorem{theorem}{Theorem}[section]
\newtheorem{lemma}[theorem]{Lemma}
\newtheorem{proposition}[theorem]{Proposition}
\newtheorem{corollary}[theorem]{Corollary}
\theoremstyle{definition}
\newtheorem{definition}[theorem]{Definition}
\newtheorem{example}[theorem]{Example}
\theoremstyle{remark}
\newtheorem{remark}[theorem]{Remark}
\theoremstyle{notation}
\numberwithin{equation}{section}
\begin{document}

\title[Double loop spaces and Cohen groups]{Combinatorics of double loop suspensions, evaluation maps and Cohen groups}

\author{Ruizhi Huang}
\address{Institute of Mathematics, Academy of Mathematics and Systems Science, Chinese Academy of Sciences, Beijing, China, 100190}

\email{huangrz@amss.ac.cn}

\author{Jie Wu}
\address{Department of Mathematics, National University of Singapore,
             Singapore, 119076}

\email{matwuj@nus.edu.sg}

\thanks{
The first named author is supported by Postdoctoral International Exchange Program for Incoming Postdoctoral Students under Chinese Postdoctoral Council and Chinese Postdoctoral Science Foundation. He is also supported in part by Chinese Postdoctoral Science Foundation (Grant No. 2018M631605), and National Natural Science Foundation of China (Grant No. 11801544).
The second named author is partially supported by the Singapore Ministry of Education research grant (AcRF Tier 1 WBS No. R-146-000-222-112) and a grant (No. 11329101) of NSFC of China.}

\subjclass[2010]{primary 55P; secondary 55Q, 55U}



\keywords{}
\numberwithin{theorem}{section}
\begin{abstract}
We reformulate Milgram's model of a double loop suspension in terms of a preoperad of posets, each stage of which is the poset of all ordered partitions of a finite set. Using this model, we give a combinatorial model for the evaluation map and use it to study the Cohen representation for the group of homotopy classes of maps between double loop suspensions. Demonstrating the general theory, we recover Wu's shuffle relations and further provide a type of secondary relations in Cohen groups by using Toda brackets. In particular, we prove certain maps are null-homotopic by combining our relations and the classical James-Hopf invariants.

\end{abstract}

\maketitle
\tableofcontents
\newpage

\section{Introduction}
\noindent In the 1990s, Cohen developed a combinatorial method to study homotopy theory based on the classical James construction. Recall for any based space $(Y,\ast)$, the James construction $J(Y)$ is the free monoid generated by points in $Y$ modulo the relation $\ast=1$. If $Y$ is path connected, then a theorem of James \cite{James} claims that $J(Y)$ is weakly homotopy equivalent to $\Omega \Sigma Y$ and there is a suspension splitting
\[\Sigma \Omega \Sigma Y \simeq \bigvee_{n=1}^{\infty} \Sigma Y^{\wedge n}.\]
Using these nice descriptions,  Cohen \cite{Cohen95} studied the combinatorial structure of $[J(Y), \Omega Z]$ and introduced a universal pro-group $\mathfrak{h}$ (now konwn as a special example of Cohen groups) such that there exists a functorial group homomorphism
\[e_f: \mathfrak{h}\rightarrow [J(Y), \Omega Z]\]
for any given $f: Y\rightarrow \Omega Z$. The importance of the Cohen groups is that $e_f$ is a faithful representation in good cases. For instance, if $Z=\Sigma Y$ with $Y$ a co-$H$-space and $f=E$ is the suspension map, then $\mathfrak{h}$ is isomorphic to the group of natural coalgebra self-transformations of the tensor functor $T$ over $\mathbb{Z}$, $\mathbb{Z}_{(p)}$, or $\mathbb{Q}$ depending on the context \cite{Wu}. Then many classical maps such as Whitehead products, Hopf invariants, power maps and the loop of degree maps can be studied through $\mathfrak{h}$. In particular, this faithful representation can be used to study homotopy exponent problems in unstable homotopy theory (e.g., see \cite{CMW}). Indeed, Cohen's combinatorial program was originally aimed to attack a strong form of Barratt conjecture:
\theoremstyle{plain}
\newtheorem{assump}{Barratt-Cohen Conjecture}
\newenvironment{myassump}[2][]
  {\renewcommand\theassump{#2}\begin{assump}[#1]}
  {\end{assump}}
\begin{myassump}{}\label{Barratt}
Let $f:\Sigma^2 X\rightarrow Z$ be map such that $p^r [f]=0$ in the group $[\Sigma^2 X, Z]$. Then 
\[ \Omega^2 f:\Omega^2\Sigma^2 X\rightarrow \Omega^2 Z \]
has order bounded by $p^{r+1}$ in the group $[\Omega^2\Sigma^2 X,\Omega^2 Z]$.
\end{myassump}
Cohen's program has two steps: first decompose the powers of $[\Omega f]$ as a product of other types of maps (possibly by Cohen groups \cite{Wu, CMW} or the distributivity law \cite{Cohen86}), and second, investigate the group homomorphism 
\[\Omega: [\Omega\Sigma^2 X,\Omega Z]\rightarrow [\Omega^2\Sigma^2 X,\Omega^2 Z]\]
in the hope of showing that some of the factors in the decomposition of powers of $[\Omega f]$ vanish after looping (e.g. \cite{MW}). For this second step, one reasonable approach is to find a suitable normal subgroup $\mathfrak{n}$ of the Cohen group $\mathfrak{h}$ that detects $p^{r+1}[\Omega^2 f]$ and 
fits into the following commutative diagram 
\begin{equation}\label{goal}
\xymatrix{
\mathfrak{h}\ar@{->>}[d] \ar[rr]^{e_f \ \ \ \ \ }
&&
[J(\Sigma X), \Omega Z]
\ar[d]^{\Omega}
\\
\mathfrak{h}/\mathfrak{n}
\ar@{.>}[rr]^{\Omega e_f \ \ \ \ \ }
&&
\lbrack \Omega^2\Sigma^2 X, \Omega^2 Z\rbrack.
}
\end{equation}
In principle, this program is enough for proving or disproving the Barratt-Cohen Conjecture. Indeed, Cohen's program is basically a type of encoding-decoding process, and we only need the functorial information of $[\Omega^2\Sigma^2 X,\Omega^2 Z]$. Furthermore, it is also possible to apply this program for concrete examples, in which case we need to shrink the category of spaces and then add more functorial conditions. 

The diagram (\ref{goal}) is our major goal. In order to obtain such a diagram, it is natural to apply some suitable model of $\Omega^2\Sigma^2 X$ and hope to get nice descriptions of $\lbrack \Omega^2\Sigma^2 X, \Omega^2 Z\rbrack$ and the loop homomorphism $\Omega$. Indeed, general iterated loop suspensions have been widely investigated, and several topologists have constructed models for $\Omega^n\Sigma^n X$. For instance, May \cite{May} constructed an elegant model $\mathcal{C}_n X$ using the little cube operads $\mathcal{C}_n$ and then developed a recognition principle for $n$-fold loop spaces. Smith \cite{Smith} gave a simplicial model $\Gamma^{(n)} X$ by construting a natural filtration of Barratt-Eccles' model $\Gamma X$ \cite{Barratt} of the infinite loop space $\Omega^{\infty}\Sigma^{\infty} X$. In this paper, we will work with Milgram's model \cite{Milgram} which is built up by permutohedra $P_k$ (see Section \ref{Permuto}). 

These various models can be organized into a general framework by using the language of preoperad by Berger \cite{Berger}, which was previously known as a coefficient system in \cite{CMT}. A preoperad is basically an operad without multiplicative structures. In this way, Berger restated Milgram's model as the premonad construction (namely coend) of the preoperad $J^{(2)}$ with 
\[J_k^{(2)}=P_k\times \Sigma_k /\sim,
\]
where $\Sigma_k$ is the $k$-th symmetric group and the equivalence relation identifies certain boundary cells.  However, this form of Milgram's model is not good enough for our combinatorial analysis, for the equivalence relations are defined by using cosets of subgroups of $\Sigma_k$. In order to study unstable homotopy along Cohen's combinatorial program, we then reformulate Milgram's model in terms of particular posets related to $P_k$ \cite{KZ}.
\begin{theorem}[Theorem \ref{config}, Proposition \ref{preoperadJ}, Theorem \ref{Milgram}]
There exists a preoperad of posets $\mathcal{L}$ such that for any connected CW complex $X$,
\[ \Omega^2\Sigma ^2 X\simeq \coprod_k|\mathcal{L}(k)|\times X^{\times k}/\sim,\]
where the right hand side is defined by the usual premonad construction for the geometric realization of $\mathcal{L}$, and each piece $\mathcal{L}(k)$ of $\mathcal{L}$ is the set of all the ordered partitions of a set of size $k$.
\end{theorem}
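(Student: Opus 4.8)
The plan is to establish the three constituent statements in turn: a levelwise identification of the realization $|\mathcal{L}(k)|$ with the permutohedron $P_k$ (Theorem \ref{config}), an upgrade of $\mathcal{L}$ to a preoperad of posets matching Berger's $J^{(2)}$ (Proposition \ref{preoperadJ}), and finally an application of Berger's premonad description of Milgram's model (Theorem \ref{Milgram}). Throughout, the guiding principle is that the ordered partitions of $\{1,\dots,k\}$ are exactly the faces of $P_k$, so that the entire discussion can be carried out on posets and only realized at the end.

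First I would set up $\mathcal{L}(k)$ as the poset of ordered partitions of $\{1,\dots,k\}$ ordered by refinement, and recall from \cite{KZ} that this poset is precisely the face poset of the permutohedron $P_k$: an ordered partition into $m$ blocks corresponds to a face of dimension $k-m$, so that the partition into singletons gives the vertices and the one-block partition gives the top cell $P_k$ itself. Since the order complex (nerve) of the face poset of a polytope is its barycentric subdivision, one obtains a canonical homeomorphism $|\mathcal{L}(k)|\cong P_k$ under which the relabeling action of $\Sigma_k$ on ordered partitions corresponds to the coordinate action of $\Sigma_k$ on $P_k$. The point is that this realizes Berger's boundary identifications $\sim$ \emph{intrinsically}: the stabilizer of a face of block type $\lambda$ is the Young subgroup $\Sigma_\lambda$, and the coset bookkeeping that defines $\sim$ on $P_k\times\Sigma_k$ is already recorded by the refinement order on ordered partitions.

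Second I would exhibit the preoperad (coefficient system) structure on $\mathcal{L}$ at the level of posets. The group $\Sigma_k$ acts by relabeling, and the remaining structure maps—those induced by the injections and surjections of finite sets governing a coefficient system in the sense of \cite{CMT, Berger}—are given by deleting and merging blocks of an ordered partition, hence are order-preserving. I would check the preoperad axioms (unit, associativity, equivariance) combinatorially on ordered partitions, and verify that after geometric realization these maps agree with Berger's structure maps. Here the auxiliary $\Sigma_k$-factor in $J_k^{(2)}=P_k\times\Sigma_k/\sim$ plays only the role of encoding the symmetric group action operadically, and it is absorbed when one forms the coend against the \emph{ordered} product $X^{\times k}$; thus it suffices to match $|\mathcal{L}(k)|\cong P_k$ together with its relabeling action and structure maps, which identifies $|\mathcal{L}|$ with $J^{(2)}$ as preoperads of spaces.

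Third, with $|\mathcal{L}|\cong J^{(2)}$ in hand, I would invoke Berger's theorem \cite{Berger}, his reformulation of Milgram \cite{Milgram}, that for a connected CW complex $X$ the premonad (coend) construction applied to $J^{(2)}$ recovers $\Omega^2\Sigma^2 X$. Since the premonad construction is functorial in the preoperad, transporting along the isomorphism $|\mathcal{L}|\cong J^{(2)}$ yields $\Omega^2\Sigma^2 X\simeq\coprod_k|\mathcal{L}(k)|\times X^{\times k}/\sim$, the connectivity of $X$ ensuring that no group completion is required. I expect the main obstacle to be the combination of the first two steps, namely showing that the purely order-theoretic data of the ordered-partition posets reproduces, $\Sigma_k$-equivariantly and compatibly with all structure maps, Berger's coset-defined gluing $\sim$; concretely, one must translate the collapsing of the $\Sigma_k$-factor over each face of $P_k$ by cosets of Young subgroups into the intrinsic refinement relation, and then check that this dictionary is natural enough to survive geometric realization and commute with the premonad functor. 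Once that identification is secured, the homotopy-theoretic conclusion follows immediately from the work of Milgram and Berger.
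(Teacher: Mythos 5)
There is a genuine error at the very first step, and it propagates through the rest of your argument. You identify $|\mathcal{L}(k)|$ with the single permutohedron $P_k$ by reading $\mathcal{L}(k)$ as the face poset of $P_k$ (ordered partitions of $[k]$ into a sequence of \emph{unordered} blocks). That is not the poset the paper uses. Here $\mathcal{L}(k)=\bigcup_{\sigma\in\Sigma_k}\mathcal{L}_\sigma$ is the set of \emph{permutations with bars}: each block is itself an ordered sequence, so there are $k!\cdot{k-1\choose i}$ elements of dimension $i$ and in particular $k!$ maximal elements (the bar-free permutations), the downward closure of each being one copy of the face poset of $P_k$. Consequently $|\mathcal{L}(k)|$ is a $(k-1)$-dimensional complex assembled from $k!$ permutohedra glued along common faces; it carries a \emph{free} $\Sigma_k$-action and is a $\Sigma_k$-equivariant deformation retract of the configuration space $F(\mathbb{R}^2,k)$ (this is exactly Theorem \ref{config}, quoted from \cite{BZ}). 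It is emphatically not contractible: for $k=3$ it has $6$ hexagons, $12$ edges, $6$ vertices, and the homotopy type of $F(\mathbb{R}^2,3)$. If each level of the preoperad were the contractible polytope $P_k$ (on which the coordinate $\Sigma_k$-action is not even free), the coend $\coprod_k|\mathcal{L}(k)|\times X^{\times k}/\sim$ could not have the homotopy type of $\Omega^2\Sigma^2X$; having a free $\Sigma_k$-space of the equivariant homotopy type of $F(\mathbb{R}^2,k)$ at level $k$ is precisely what makes the filtration quotients $D_k(X)=\mathcal{F}(k)^{+}\wedge_{\Sigma_k}X^{\wedge k}$ come out right. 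For the same reason, your remark that the $\Sigma_k$-factor in Berger's $J_k^{(2)}=P_k\times\Sigma_k/\sim$ ``is absorbed'' by forming the coend against $X^{\times k}$ is a misreading: that factor is what produces the $k!$ top cells, and the identification you actually need is $|\mathcal{L}(k)|\cong J_k^{(2)}$, not $|\mathcal{L}(k)|\cong P_k$.

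Once this is corrected, the skeleton of your plan (identify the levels, endow $\mathcal{L}$ with a preoperad structure, invoke a coend comparison) does match the paper's strategy, but the two remaining steps still need more than you give them. The proof of Proposition \ref{preoperadJ} does not reduce to ``deleting and merging blocks is order-preserving'': the morphisms of $\mathbf{\Lambda}$ are based injections only, $\phi^\ast$ acts on a barred permutation $a=(\tilde a,\pi_a)$ by restricting and renormalizing both the permutation and the bar pattern, and verifying order-preservation is the real content of that proof (the paper characterizes $a\prec b$ by two conditions on the pairs $(\tilde a,\pi_a)$, $(\tilde b,\pi_b)$ and has to construct an auxiliary nondecreasing surjection $\rho'$ via the intermediate elements $a'$, $b'$). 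Likewise, for Theorem \ref{Milgram} the paper does not pass through Berger's coset description at all: it compares the stratification of $F(\mathbb{R}^2,l)$ by the cells $F^a_l$ (defined by anti-lexicographic order conditions) with $\mathcal{L}(l)$, checks that this comparison respects the preoperad structures, and then applies Lemma $2.7$ of \cite{CMT} to conclude $\mathcal{F}(X)\simeq F(\mathbb{R}^2,X)\simeq\Omega^2\Sigma^2X$. Your route through $J^{(2)}$ could in principle be made to work, but only after replacing $P_k$ by $P_k\times\Sigma_k/\sim$ throughout and supplying the equivariant comparison that your first step currently gets wrong.
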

Through this purely combinatorial description, the group $[\Omega^2\Sigma^2, \Omega^2\Sigma^2]$ of natural transformations are completely determined by the preoperad $\mathcal{L}$. We immediately get the following corollary concerning functorial homotopy decompositions. 
\begin{corollary}
For any idempotent $e: \mathcal{L}\rightarrow \mathcal{L}$ of preoperads of posets, there exists a natural decomposition 
\[\Omega^2\Sigma^2X \simeq E(X) \times I(X),\]
such that 
\[ E(X)\simeq \coprod_k|{\rm Im}(e)_k|\times X^{\times k}/\sim.\]
\end{corollary}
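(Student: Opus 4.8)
The plan is to promote the combinatorial idempotent $e$ to a self-map of $\Omega^2\Sigma^2 X$ and then use the double loop structure to turn the resulting retraction into a genuine product splitting. First I would invoke the preceding Theorem to identify $\Omega^2\Sigma^2 X$ with the premonad construction $\mathcal{L}(X):=\coprod_k |\mathcal{L}(k)|\times X^{\times k}/\!\sim$, and record that this construction is functorial in the preoperad variable and carries the (group-like, homotopy associative, homotopy commutative) double loop $H$-space structure. Functoriality applied to $e$ yields a self-map $e_X\colon \Omega^2\Sigma^2 X\to\Omega^2\Sigma^2 X$ with $e_X\circ e_X=(e\circ e)_X=e_X$; since $e$ is a morphism of preoperads it respects the structure maps that assemble the loop multiplication, so $e_X$ is an $H$-map.

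Next I would split the idempotent combinatorially. Posets form an idempotent-complete category: for each $k$ the fixed sub-poset ${\rm Im}(e)_k=\{x\in\mathcal{L}(k): e_k(x)=x\}$ factors $e_k$ as $\mathcal{L}(k)\twoheadrightarrow {\rm Im}(e)_k\hookrightarrow \mathcal{L}(k)$, and because $e$ is a preoperad morphism these splittings are compatible with the preoperad structure, so ${\rm Im}(e)$ is itself a preoperad of posets and a retract of $\mathcal{L}$. Applying geometric realization and the premonad construction gives $E(X)=\coprod_k |{\rm Im}(e)_k|\times X^{\times k}/\!\sim$ together with $H$-maps $r\colon \Omega^2\Sigma^2 X\to E(X)$ and $s\colon E(X)\to\Omega^2\Sigma^2 X$ satisfying $r\circ s\simeq {\rm id}_{E(X)}$ and $s\circ r\simeq e_X$; in particular $E(X)$ realizes the image of the strict idempotent $e_X$ with no telescope needed.

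The remaining step is to upgrade this retraction to a product. Working in the abelian group $[\Omega^2\Sigma^2 X,\Omega^2\Sigma^2 X]$ whose addition is induced by the loop multiplication $\mu$ (abelian by homotopy commutativity), I would form the complementary map $\bar e_X:={\rm id}-e_X$, i.e. $\bar e_X=\mu\circ({\rm id}\times(\chi\circ e_X))\circ\Delta$ with $\chi$ the homotopy inverse and $\Delta$ the diagonal. Using that $e_X$ is an $H$-map (so composition distributes over this addition) one checks that $e_X$ and $\bar e_X$ are orthogonal homotopy idempotents with $e_X+\bar e_X={\rm id}$. Letting $I(X)$ be the homotopy image (mapping telescope) of $\bar e_X$, the map $\mu\circ(s\times s_I)\colon E(X)\times I(X)\to\Omega^2\Sigma^2 X$ is then a homotopy equivalence, and naturality in $X$ is automatic since every map in sight is induced by the functorial premonad construction.

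I expect the main obstacle to be the verification that $e_X$ is compatible with the $H$-structure, namely that morphisms of preoperads of posets induce $H$-maps on the premonad model; this is precisely what makes the complementary map $\bar e_X$ a composition idempotent and hence makes the homotopy retract $E(X)$ a bona fide product factor rather than a mere retract. The homotopy commutativity of the double loop space is essential here, since it guarantees that $[\Omega^2\Sigma^2 X,\Omega^2\Sigma^2 X]$ is an abelian group in which the orthogonal-idempotent bookkeeping $e_X+\bar e_X={\rm id}$ and $e_X\circ\bar e_X\simeq 0\simeq\bar e_X\circ e_X$ is valid.
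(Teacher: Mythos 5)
Your first two steps are sound and are what the paper intends: functoriality of the coend construction gives a strict idempotent $e_X$ on $\mathcal{F}(X)\simeq\Omega^2\Sigma^2X$, the fixed sub-poset ${\rm Im}(e)$ is a sub-preoperad, and its coend $E(X)$ splits off as a natural retract with $r\circ s={\rm id}$ and $s\circ r=e_X$. The gap is in your third step, specifically the claim that $e_X$ is an $H$-map ``since $e$ is a morphism of preoperads.'' A preoperad is precisely an operad with the composition operations forgotten; the loop multiplication on Milgram's model is assembled from composition-type maps $\mathcal{F}(j)\times\mathcal{F}(k)\to\mathcal{F}(j+k)$ (realizing faces $P_j\times P_k\subset P_{j+k}$), and these are not part of the $\mathbf{\Lambda}$-structure, so a morphism of preoperads has no reason to commute with the multiplication, even up to homotopy. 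Without the $H$-map property your orthogonal-idempotent bookkeeping collapses: composition distributes over the $H$-space addition only under precomposition, so you cannot conclude $e_X\circ\bar e_X\simeq 0$ nor that $\bar e_X={\rm id}-e_X$ is a homotopy idempotent, and then the map $s_I$ out of the telescope of $\bar e_X$ (which needs a cocone built from homotopies $\bar e_X\circ\bar e_X\simeq\bar e_X$) is never actually constructed.

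Fortunately the $H$-map hypothesis is unnecessary, and the corollary still follows immediately from the retraction. Take $I(X)$ to be the homotopy fibre of the natural retraction $r\colon\Omega^2\Sigma^2X\to E(X)$ and consider $\mu\circ(s\times i)\colon E(X)\times I(X)\to\Omega^2\Sigma^2X$, where $i$ is the fibre inclusion and $\mu$ is the loop multiplication. Since $\Omega^2\Sigma^2X$ is a path-connected homotopy-associative $H$-space with homotopy inverse, its homotopy groups are abelian and the long exact sequence of the fibration splits via $s_*$; on $\pi_*$ the displayed map is $(a,b)\mapsto s_*a+i_*b$, an isomorphism onto ${\rm Im}(e_{X*})\oplus{\rm Ker}(e_{X*})=\pi_*(\Omega^2\Sigma^2X)$, using only that $e_{X*}$ is an idempotent endomorphism of an abelian group. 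Whitehead's theorem then gives the equivalence, and naturality is inherited from the naturality of $r$, $s$ and $\mu$. So your decomposition is correct, but it should be obtained by this elementary homotopy-group argument rather than through orthogonal idempotents in $[\Omega^2\Sigma^2X,\Omega^2\Sigma^2X]$.
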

Continuing with Cohen's program, we need to study the loop homomorphism $\Omega: [J(\Sigma X), \Omega Z]\rightarrow \lbrack \Omega^2\Sigma^2 X, \Omega^2 Z\rbrack$, which it is equivalent to studying the evaluation map
\[{\rm ev}: \Sigma\Omega^2\Sigma^2 X\rightarrow \Omega \Sigma^2X,\]
as the adjoint of the identity map. Now let us denote 
\[\mathcal{F}(k)=|\mathcal{L}(k)|, ~~ \ \  \mathcal{F}(X)=\coprod_k\mathcal{F}(k)\times X^{\times k}/\sim,\]
\[ \mathcal{F}_n(X)=\coprod_{1\leq k\leq n}\mathcal{F}(k)\times X^{\times k}/\sim, ~~\ \
D_n(X)=\mathcal{F}_n(X)/\mathcal{F}_{n-1}(X).\]
As we can see from the poset of $\mathcal{F}(k)$,  $\mathcal{F}(k)$ is a $k-1$ dimensional polyhedron with $k!$ top cells, and in general has $k!\cdot {k-1\choose{i}}$ cells of dimension $i$.
Following an elegant explanation of Milgram's model in \cite{Milgram74}, we obtain a clear model of the evaluation map.
\begin{lemma}[Section \ref{evaluation}]\label{mainlemma}
We can choose a filtration preserving evaluation map
\[{\rm ev}: \Sigma\mathcal{F}(X)\rightarrow J(\Sigma X)\]
such that the quotient map 
\[ \bar{{\rm ev}}_n: \Sigma D_n(X)\simeq \Sigma \mathcal{F}(n)^{+}\wedge_{\Sigma_n} X^{\wedge n}\rightarrow J_n(\Sigma X)/J_{n-1}(\Sigma X)\simeq (\Sigma X)^{\wedge n}\]
is the natural projection
\[\Sigma q\wedge {\rm id}: \Sigma \mathcal{F}(n)^{+}\wedge_{\Sigma_n} X^{\wedge n} \rightarrow \Sigma \bigvee_{n!} S^{n-1} \wedge_{\Sigma_n} X^{\wedge n}\simeq  (\Sigma X) ^{\wedge n},\]
where $q$ maps all the cells except the ones of highest dimension to the basepoint.
\end{lemma}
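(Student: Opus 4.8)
\subsection*{Proof proposal}

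The plan is to build the evaluation map directly from the configuration picture underlying Milgram's model, following the geometric description in \cite{Milgram74}, and then read off its behaviour on subquotients from the cell structure of $\mathcal{F}(n)=|\mathcal{L}(n)|$ supplied by Theorem \ref{config} and Proposition \ref{preoperadJ}. Concretely, a point of $\Sigma\mathcal{F}(X)$ is represented by data $(s;\vec{P};x_1,\dots,x_k)$ with $s\in S^1$ the suspension parameter, $\vec{P}\in\mathcal{F}(k)$ an interior point of the cell indexed by an ordered partition of $\{1,\dots,k\}$, and $x_i\in X$. I would define ${\rm ev}$ by using $s$ together with the barycentric data of $\vec{P}$ to assign to each label $x_i$ a height $t_i\in[0,1]$, and then reading the labels off in the order prescribed by $\vec{P}$ to form the James word whose letters are the points $t_i\wedge x_i\in\Sigma X$, giving an element of $J(\Sigma X)=\Omega\Sigma(\Sigma X)$, with the convention $0\wedge x=\ast$ realizing the James relations. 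The first thing to check is that this descends through the premonad equivalence relation $\sim$: a label at the basepoint of $X$, or a height at $0$, must give the empty contribution, and the height-assignment must be $\Sigma_k$-equivariant, both of which I would arrange to be built into the construction.

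Since a configuration with at most $n$ labels produces a word of length at most $n$, the map visibly satisfies ${\rm ev}(\Sigma\mathcal{F}_n(X))\subseteq J_n(\Sigma X)$, which gives filtration preservation and hence an induced map $\bar{{\rm ev}}_n$ on the $n$-th subquotient. To identify $\bar{{\rm ev}}_n$, I would analyse when the resulting word has full length $n$, i.e.\ when its image is nonzero in $J_n(\Sigma X)/J_{n-1}(\Sigma X)\simeq(\Sigma X)^{\wedge n}$. The word survives to the quotient precisely when all $n$ labels avoid the basepoint and all $n$ assigned heights are distinct and positive; by the correspondence between the face poset of $\mathcal{F}(n)$ and coarsenings of ordered partitions (finest $=$ top cells $=$ total orders $=\Sigma_n$, coarser $=$ lower cells), this generic locus is exactly the union of the interiors of the top $(n-1)$-cells, whereas every cell of dimension $\leq n-2$ forces two heights to collide or a block to merge, shortening the word and sending it into $J_{n-1}$. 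Thus $\bar{{\rm ev}}_n$ kills the $(n-2)$-skeleton of $\mathcal{F}(n)$, which is exactly the collapse map $q$, and the remaining $n-1$ permutohedral coordinates together with the suspension coordinate $s$ supply the $n$ suspension heights of $(\Sigma X)^{\wedge n}$.

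Finally I would match the two sides under the stated identifications. After collapsing, $\Sigma\mathcal{F}(n)^+/\Sigma\mathcal{F}(n)^{(n-2)}\simeq\bigvee_{n!}S^n$ with $\Sigma_n$ permuting the summands freely (it permutes the top cells freely), so smashing with $X^{\wedge n}$ over $\Sigma_n$ collapses the free orbit and yields $S^n\wedge X^{\wedge n}=(\Sigma X)^{\wedge n}$, matching $J_n/J_{n-1}$; one then checks that the surviving cell of order $\sigma$ maps by the identity onto the copy of $(\Sigma X)^{\wedge n}$ it indexes, so that $\bar{{\rm ev}}_n=\Sigma q\wedge{\rm id}$ as claimed. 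The main obstacle is the middle step: verifying that the height-assignment degenerates \emph{exactly} on the codimension-one-and-higher skeleton and nowhere else, and that it does so $\Sigma_n$-equivariantly and compatibly with $\sim$. This is where the explicit combinatorics of $\mathcal{L}(n)$ — the identification of faces with ordered partitions and of top cells with permutations — must be used in full, and where the geometric bookkeeping from \cite{Milgram74} does the real work.
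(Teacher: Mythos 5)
Your construction is essentially the paper's: the paper also follows Milgram's geometric recipe from \cite{Milgram74}, realizing your ``height assignment'' as the composite of Milgram's map $r_n:\mathcal{F}(n)\to{\rm Path}(n)$ (paths in $I^n$ from $(0,\dots,0)$ to $(1,\dots,1)$) with the map $\psi_n:{\rm Path}(n)\times X^{\times n}\to\Omega(J_n(\Sigma X))$ that reads off the $n$ coordinates of the path at time $s$ as the suspension heights of the letters. Your adjoint $(s;\vec P;x_1,\dots,x_k)\mapsto$ (James word) is exactly $(\tilde\phi)^{\sharp}$, and your observations on filtration preservation and on the boundary of $\mathcal{F}(n)$ landing in $J_{n-1}(\Sigma X)$ match the paper's (the latter because $r_k$ carries $\partial\mathcal{F}(k)$ to paths in $\partial I^k$). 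So the factorization of $\bar{\rm ev}_n$ through the collapse $\Sigma q\wedge{\rm id}$ is correctly argued.

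The genuine gap is your last step: ``one then checks that the surviving cell of order $\sigma$ maps by the identity onto the copy of $(\Sigma X)^{\wedge n}$ it indexes.'' This is not something that falls out of the combinatorics of $\mathcal{L}(n)$; after collapsing the $(n-2)$-skeleton what you obtain a priori is a factorization $\bar{\rm ev}_n=e\circ(\Sigma q\wedge{\rm id})$ for \emph{some} self-map $e$ of $(\Sigma X)^{\wedge n}$, and the whole content of the lemma is that $e$ is a homotopy equivalence. The paper settles this by quoting Milgram's Lemma $4.6$ of \cite{Milgram}, which says the adjoint $(r_n)^{\sharp}:I\times\mathcal{F}(n)\to I^n$ is cellular of degree $\pm1$; combined with naturality in $X$ this forces $e=i\wedge{\rm id}$ with $\deg(i)=\pm1$, hence an equivalence, and one then \emph{chooses} $\Sigma q\wedge{\rm id}$ as the representative of $\bar{\rm ev}_n$ --- this is precisely why the statement is phrased ``we can choose.'' Without some such degree computation your assertion that the top cell maps by the identity is unjustified (and, as stated, stronger than what is true); this is the one point where the proposal needs to be completed.
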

This nice choice of evaluation map allows us to detect certain subgroups $\mathfrak{n}\subseteq \mathfrak{h}$ which act trivially on double loop suspensions (as in \cite{Wu}, we also call such vanishing subgroups relations in Cohen groups). For this purpose, we need a systematic way to handle subgroups of a so-called bi-$\Delta$-group, which is exactly the notion needed in Cohen's combinatorial program. Indeed, bi-$\Delta$-groups were defined and used by the second author \cite{Wu} to detect shuffle relations in Cohen groups. A bi-$\Delta$-group is basically a sequence of groups with both $\Delta$- and co-$\Delta$-structures subject to some natural coherency conditions. For each bi-$\Delta$-group there is a natural notion of a (general) Cohen group (See section \ref{combisection}).
In particular, $[J(Y), \Omega Z]$ and $\mathfrak{h}$ are the Cohen groups of some bi-$\Delta$-groups. Thanks to these notions and examples, Cohen's theory was largely generalized in \cite{Wu}, and is further generalized in this paper. 

Instead of choosing a particular class of maps $[f]$ in $[Y, \Omega Z]$, we start with any group homomorphism 
\[\phi: G\rightarrow [Y^{\wedge (n+1)}, \Omega Z].\]
Indeed, the sequence $\Omega Z^\ast(Y)=[Y^{\times (i+1)}, \Omega Z]_{i\geq 0}$ has a natural bi-$\Delta$-structure such that $[Y^{\wedge (i+1)}, \Omega Z]=\mathcal{Z}_i\Omega Z^\ast(Y)$ is the group of Moore cycles. For any $\Delta$-group $\mathcal{G}$, there is a natural Cohen group $\mathfrak{h}\mathcal{G}$ which is a subgroup of $\mathcal{Z}\mathcal{G}$. It then can be shown that 
\[\mathfrak{h}\Omega Z^\ast(Y)\cong [J(Y), \Omega Z].\] 
In order to study the group $[J(Y), \Omega Z]$ through the representation $\phi$, we then associate a bi-$\Delta$-group $\Phi_nG$ as a bi-$\Delta$-extension of $G$ and extend $\phi$ to a morphism of bi-$\Delta$-groups
\[\phi: \Phi_nG\rightarrow  \Omega Z^\ast(Y),\]
which induces a filtration preserving homomorphism of Cohen groups
\[
\mathfrak{h}\phi: \mathfrak{h}\Phi_n G\rightarrow  [J(Y), \Omega Z],
\]
such that $\mathfrak{h}_n\phi=\phi: G\rightarrow [J_{n+1}(Y), \Omega Z]$. In this context, we then can detect vanishing subgroups $\mathfrak{n}$ by the following procedure.  

\theoremstyle{plain}
\theoremstyle{definition}
\newtheorem*{assum}{Detect relations in Cohen groups}
\newenvironment{myassum}[2][]
  {\renewcommand\theassump{#2}\begin{assum}[#1]}
  {\end{assum}}
\begin{myassum}{}\label{Proce}[Section \ref{Wuapp}, Section \ref{Sh+Co}]
 
\begin{itemize}
 \item Relations for $[J(Y), \Omega Z]$.
  \begin{itemize}
  \item find a sequence of representable elements, subsets, or subgroups $\mathbf{k}$ which are null-homotopic in $\{{\rm Map}(Y^{\times (i+1)}, \Omega Z)\}_{i\geq 0}$.
  \item find representatives of $\mathbf{k}$ and take normal closure to get a sequence of groups $\mathbf{h}$ in $\Phi_nG$. 
  \item take the normal bi-$\Delta$-extension of $\mathbf{h}$ to get a normal bi-$\Delta$-subgroup $\mathcal{H}$ of $\Phi_n G$.
    \item the Cohen group $\mathfrak{h}\mathcal{H}$ consists of all the relations determined by $\mathbf{k}$.
  \end{itemize} 
 \item Relations for $\Omega:  [J(\Sigma X), \Omega Z]\rightarrow \lbrack  \Omega^2\Sigma^2 X, \Omega^2 Z\rbrack$ ($Y=\Sigma X$).
 \begin{itemize}
  \item find a sequence of elements, subsets, or subgroups $\mathbf{s}$ in $ \Omega Z^\ast(\Sigma X)$ which are trivial after looping.
   \item take the pullback of $\mathbf{s}$ along $\phi$ and obtain a sequence of subgroups $\mathbf{r}$ of $\Phi_n G$.
   \item take the normal bi-$\Delta$-extension of $\mathbf{r}$ to get a normal bi-$\Delta$-subgroup $\mathcal{R}$ of $\Phi_n G$.
    \item $\mathfrak{h}\mathcal{R}\cap {\rm Ker}(\Omega\circ \mathfrak{h}\phi)$ consists of all the relations determined by $\mathbf{s}$.
  \end{itemize}
\end{itemize}
\end{myassum}

The procedure for $[J(Y), \Omega Z]$ can be used to cover Cohen's original theory easily; in contrast, the procedure for the loop homomorphism is our main concern here. Thanks to Lemma \ref{mainlemma}, we can detect the shuffle relations of Wu \cite{Wu} in Cohen groups using the cell structure of the permutohedra $P_m$.
\begin{theorem}[Lemma \ref{combish}, Lemma \ref{newnull0}, Proposition \ref{shthm}]
Let 
\[ \beta: \bigvee_{m!} S^{m-2}\rightarrow {\rm sk}_{m-2} \mathcal{F}(m)/ {\rm sk}_{m-3} \mathcal{F}(m)\simeq \bigvee_{(m-1)\cdot m!} S^{m-2}\] 
be the attaching map. 
Then the composite 
\[{\rm sh}\circ\bar{{\rm ev}}_m: \Sigma D_m(X)\twoheadrightarrow (\Sigma X)^{\wedge m}
\longrightarrow \bigvee_{m-1}(\Sigma X)^{\wedge m}\]
is null homotopic. Here, $\Sigma^2\beta\wedge {\rm id}\simeq {\rm sh}$ and the shuffle map ${\rm sh}$ is defined by 
\[{\rm sh}~(y_1\wedge \cdots \wedge y_m)=\sum\limits_{\sigma \in {\rm sh}\subseteq \Sigma_m}y_{\sigma(1)}\wedge \cdots \wedge y_{\sigma(m)}. \]
These null homotopic compositions for $m\geq 2$ determine a group $\mathfrak{b}\mathcal{N}_{\Phi_n G}{\rm Sh}$ such that the diagram
\[
\xymatrix{
\mathfrak{h}\Phi_n G 
\ar[rr]^{\mathfrak{h}\phi}
\ar@{->>}[d]
&&
[J(\Sigma X), \Omega Z]
\ar[d]^{\Omega}
\\
\mathfrak{h}\Phi_n G /\mathfrak{b}\mathcal{N}_{\Phi_n G}{\rm Sh}
\ar@{.>}[rr]^{\Omega\mathfrak{h}\phi}
&&
\lbrack \Omega^2\Sigma^2 X, \Omega^2 Z\rbrack
}
\]
commutes.
\end{theorem}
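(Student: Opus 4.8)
The plan is to prove the three asserted ingredients in sequence: the combinatorial identification $\Sigma^2\beta\wedge{\rm id}\simeq{\rm sh}$, the null-homotopy of ${\rm sh}\circ\bar{{\rm ev}}_m$, and the algebraic packaging into the quotient $\mathfrak{h}\Phi_n G/\mathfrak{b}\mathcal{N}_{\Phi_n G}{\rm Sh}$ together with the commuting square. I begin by fixing the relevant cellular data. By the cell count recorded before Lemma \ref{mainlemma}, the complex $\mathcal{F}(m)=|\mathcal{L}(m)|$ is $(m-1)$-dimensional; its $(m-1)$-skeleton is obtained from ${\rm sk}_{m-2}\mathcal{F}(m)$ by attaching the $m!$ top cells along a $\Sigma_m$-equivariant map $f\colon\bigvee_{m!}S^{m-2}\to{\rm sk}_{m-2}\mathcal{F}(m)$, while the $(m-1)\cdot m!$ cells of dimension $m-2$ carry a free $\Sigma_m$-action with exactly $m-1$ orbits, one for each nontrivial split $m=i+(m-i)$. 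The map $\beta$ is, by definition, $f$ followed by the cellular collapse ${\rm sk}_{m-2}\mathcal{F}(m)\to{\rm sk}_{m-2}\mathcal{F}(m)/{\rm sk}_{m-3}\mathcal{F}(m)$.

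For the identification (Lemma \ref{combish}) I would read the incidence numbers between a top cell, indexed by a permutation $\sigma\in\Sigma_m$, and the $(m-2)$-cells directly from the face poset of the permutohedron: the facets of $P_m$ correspond to ordered two-block partitions $(S,T)$, and the boundary of the cell of $\sigma$ meets precisely those facets compatible with $\sigma$, governed by the $(i,m-i)$-shuffles. Smashing $\Sigma^2\beta$ with $X^{\wedge m}$ and passing to $\Sigma_m$-coinvariants collapses the $(m-1)\cdot m!$ target cells to the $m-1$ splits, and I would verify that the signed cellular incidences symmetrize to the unsigned shuffle sum $\sum_{\sigma\in{\rm Sh}}y_{\sigma(1)}\wedge\cdots\wedge y_{\sigma(m)}$, yielding $\Sigma^2\beta\wedge{\rm id}\simeq{\rm sh}$.

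For the null-homotopy (Lemma \ref{newnull0}) I would invoke Lemma \ref{mainlemma}, by which $\bar{{\rm ev}}_m=\Sigma q\wedge{\rm id}$ with $q\colon\mathcal{F}(m)\to\bigvee_{m!}S^{m-1}$ the collapse of ${\rm sk}_{m-2}\mathcal{F}(m)$. Hence ${\rm sh}\circ\bar{{\rm ev}}_m=(\Sigma^2\beta\circ\Sigma q)\wedge{\rm id}=\Sigma(\Sigma\beta\circ q)\wedge{\rm id}$, and it suffices to see $\Sigma\beta\circ q\simeq\ast$. Since $\mathcal{F}(m)$ is the mapping cone of $f$ over ${\rm sk}_{m-2}\mathcal{F}(m)$, the Barratt--Puppe sequence
\[{\rm sk}_{m-2}\mathcal{F}(m)\longrightarrow\mathcal{F}(m)\xrightarrow{\ q\ }\bigvee_{m!}S^{m-1}\xrightarrow{\ \Sigma f\ }\Sigma\,{\rm sk}_{m-2}\mathcal{F}(m)\]
identifies $q$ with the map to the cofiber and the following map with $\Sigma f$; consecutive composites are null, so $\Sigma f\circ q\simeq\ast$. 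As $\beta=c\circ f$ for the collapse $c$, we get $\Sigma\beta\circ q=\Sigma c\circ(\Sigma f\circ q)\simeq\ast$. Because $q$, $f$, and $c$ are $\Sigma_m$-equivariant, this null-homotopy may be taken equivariantly and descends after smashing with $X^{\wedge m}$ over $\Sigma_m$, giving ${\rm sh}\circ\bar{{\rm ev}}_m\simeq\ast$.

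Finally, for the group and the square (Proposition \ref{shthm}) I would run the procedure ``Detect relations in Cohen groups'' for the loop homomorphism. The essential input is that $\bar{{\rm ev}}_m$ computes the associated graded of the loop homomorphism $\Omega$ on the filtration quotients $D_m$, so the null-homotopic composites ${\rm sh}\circ\bar{{\rm ev}}_m$ for $m\geq2$ define a sequence ${\rm Sh}$ of elements of $\Omega Z^\ast(\Sigma X)$ that are trivial after looping. Pulling ${\rm Sh}$ back along $\phi$ and forming the normal bi-$\Delta$-extension $\mathcal{N}_{\Phi_n G}{\rm Sh}$ yields a normal bi-$\Delta$-subgroup of $\Phi_n G$ whose associated Cohen group is $\mathfrak{b}\mathcal{N}_{\Phi_n G}{\rm Sh}$; the last bullet of the procedure places this subgroup inside $\ker(\Omega\circ\mathfrak{h}\phi)$, which is exactly the assertion that $\Omega\circ\mathfrak{h}\phi$ factors through $\mathfrak{h}\Phi_n G/\mathfrak{b}\mathcal{N}_{\Phi_n G}{\rm Sh}$; well-definedness and the homomorphism property of the dotted $\Omega\mathfrak{h}\phi$ follow from normality and the functoriality of $\mathfrak{h}(-)$. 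The main obstacle is the combinatorial step, Lemma \ref{combish}: fixing the attaching map $\beta$ with the correct incidence coefficients and signs, and checking that its $\Sigma_m$-equivariant smash reproduces precisely the shuffle sum with the correct number $m-1$ of summands. Once this is in place the null-homotopy is formal from the Puppe sequence, and the passage to Cohen groups is a direct application of the detection procedure given the bi-$\Delta$-structure on $\Omega Z^\ast(\Sigma X)$.
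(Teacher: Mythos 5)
Your proposal follows essentially the same route as the paper: you identify ${\rm sh}$ with the double suspension of the attaching map via the two-block ordered partitions (shuffles) in the face poset of $P_m$, deduce the null-homotopy of ${\rm sh}\circ\bar{{\rm ev}}_m$ from consecutive maps in the (equivariantly smashed) cofibration/Puppe sequence of the skeletal filtration of $\mathcal{F}(m)$, and package the vanishing via the pullback along $\phi$ and the normal bi-$\Delta$-extension exactly as in the detection procedure. The only looseness is that $\mathfrak{b}\mathcal{N}_{\Phi_n G}{\rm Sh}$ is not the full Cohen group of the extension but the pullback of ${\rm Ker}({\rm ev}^{\ast}\circ i)$ inside it, a point you implicitly acknowledge by citing the intersection with $\ker(\Omega\circ\mathfrak{h}\phi)$, so the argument is correct.
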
 
We notice that for shuffle relations we only use the cells of $P_m$ in the top two dimensions. The whole cell complex of $P_m$ should provide more relations. Indeed, we can obtain a type of secondary relations using the cells in the top four dimensions in terms of classical Toda brackets.
\begin{theorem}[Section \ref{Todasection}, Theorem \ref{Todarelation}]
The top four dimensions of $P_m$ determine a vanishing Toda bracket, and hence a subgroup 
\[{\rm Indet}_m\subseteq [(\Sigma X)^{\wedge m},\bigvee_{{m-1\choose{3}}}\Sigma^{-1}(\Sigma X)^{\wedge m}]\]
such that 
\[\bar{{\rm ev}}_m^\ast({\rm Indet}_m)=\{0\}\subset [\Sigma D_m(X), \bigvee_{{m-1\choose{3}}}\Sigma^{-1}(\Sigma X)^{\wedge m}].\]
We then have a commutative diagram of groups
\[
\xymatrix{
\mathfrak{h}\Phi_n G 
\ar[rr]^{\mathfrak{h}\phi}
\ar@{->>}[d]
&&
[J(\Sigma X), \Omega Z]
\ar[d]^{\Omega}
\\
\mathfrak{h}\Phi_n G /\mathfrak{b}\mathcal{N}_{\Phi_n G}{\rm T}
\ar@{.>}[rr]^{\Omega\mathfrak{h}\phi}
&&
\lbrack \Omega^2\Sigma^2 X, \Omega^2 Z\rbrack,
}
\]
where $\mathfrak{b}\mathcal{N}_{\Phi_n G}{\rm T}$ is determined by our procedure for ${\rm Indet}_m$.
\end{theorem}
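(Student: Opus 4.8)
The plan is to split the statement into a homotopy-theoretic core — that the top four dimensions of $P_m$ produce a vanishing Toda bracket whose indeterminacy $\mathrm{Indet}_m$ is annihilated by $\bar{\mathrm{ev}}_m$ — and a formal part, the commutative square, which follows by feeding this input into the procedure for the loop homomorphism $\Omega$ exactly as in the shuffle case. First I would record the cellular data of $\mathcal{F}(m)$ in the top four dimensions. By the cell count $m!\binom{m-1}{i}$, the skeletal slice $\mathrm{sk}_{m-1}\mathcal{F}(m)/\mathrm{sk}_{m-5}\mathcal{F}(m)$ carries cells only in dimensions $m-1,m-2,m-3,m-4$, and the realized relative attaching maps are maps of wedges of spheres $\partial_{m-1},\partial_{m-2},\partial_{m-3}$ whose degree matrices are the cellular boundary matrices of the permutohedron. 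Being differentials of a chain complex, they satisfy $\partial_{m-2}\partial_{m-1}=0$ and $\partial_{m-3}\partial_{m-2}=0$ as matrices; since the maps are between wedges of spheres, the corresponding composites are null-homotopic. This is precisely the data needed to define the Toda bracket $\langle\partial_{m-3},\partial_{m-2},\partial_{m-1}\rangle$, which after smashing with $X^{\wedge m}$, passing to $\Sigma_m$-orbits and suspending as in Lemma \ref{mainlemma} takes values in $[(\Sigma X)^{\wedge m},\bigvee_{\binom{m-1}{3}}\Sigma^{-1}(\Sigma X)^{\wedge m}]$. The index $\binom{m-1}{3}$ is the reduced number of $(m-4)$-cells, i.e. the target of $\partial_{m-3}$ after the orbit identification $\bigvee_{m!}S^{m-1}\wedge_{\Sigma_m}X^{\wedge m}\simeq(\Sigma X)^{\wedge m}$ of Lemma \ref{mainlemma}.

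Next I would show the bracket is vanishing and pin down $\mathrm{Indet}_m$. The key observation is that the complex $\mathcal{F}(m)$ genuinely exists, so the null-homotopies witnessing $\partial_{m-2}\partial_{m-1}\simeq 0$ and $\partial_{m-3}\partial_{m-2}\simeq 0$ may be chosen to be those coming from the actual attaching of cells inside $\mathcal{F}(m)$; these assemble into an explicit null-homotopy of the defining composite, so $0\in\langle\partial_{m-3},\partial_{m-2},\partial_{m-1}\rangle$. Hence the bracket equals its indeterminacy, and I define $\mathrm{Indet}_m$ to be this subgroup; by the standard indeterminacy formula it is generated by postcomposition with the lowest of the three boundary maps and by precomposition with the suspension of the highest one. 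This is the four-dimensional, one-level-deeper analogue of the two-dimensional shuffle picture, in which only the top attaching map $\beta=\partial_{m-1}$ intervenes.

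I would then prove $\bar{\mathrm{ev}}_m^\ast(\mathrm{Indet}_m)=\{0\}$, and this is where the model of the evaluation map is essential. By Lemma \ref{mainlemma}, $\bar{\mathrm{ev}}_m$ is $\Sigma q\wedge\mathrm{id}$ with $q$ collapsing every cell of $\mathcal{F}(m)$ except the top ones, and it is the associated graded of a \emph{filtration-preserving} evaluation map. Precomposing a generator of $\mathrm{Indet}_m$ with $\bar{\mathrm{ev}}_m$ therefore routes it through the top-cell projection: each generator is a composite involving a non-top cellular boundary map, so after precomposition with $\Sigma q\wedge\mathrm{id}$ it becomes a composite of two consecutive maps of the cofiber filtration of $\mathcal{F}(m)$ and is null-homotopic, exactly as in the shuffle computation of Lemma \ref{newnull0}. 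I expect this to be the main obstacle: one must verify that \emph{every} generator of the indeterminacy — both the postcomposition family and the precomposition family, not merely the bracket value — dies under the top-cell projection, which requires tracking the cellular incidence data of $P_m$ coherently across three consecutive boundary maps rather than the single one used for the shuffle relations.

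Finally, with this homotopy input secured, the commutative square is formal, following the second half of the procedure ``Detect relations in Cohen groups'' verbatim with $\mathrm{Sh}$ replaced by $\mathrm{T}$. I would take $\mathbf{s}$ to be the family $\{\mathrm{Indet}_m\}_{m\geq 2}$, which is trivial after looping precisely because $\Omega$ on $[J(\Sigma X),\Omega Z]$ is computed by composition with the evaluation map and $\bar{\mathrm{ev}}_m^\ast$ kills it; pull back along $\phi$ to obtain the subgroups $\mathbf{r}$ of $\Phi_n G$; and take the normal bi-$\Delta$-extension to produce the normal bi-$\Delta$-subgroup whose Cohen group is $\mathfrak{b}\mathcal{N}_{\Phi_n G}\mathrm{T}$. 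By construction $\mathfrak{b}\mathcal{N}_{\Phi_n G}\mathrm{T}\subseteq\mathrm{Ker}(\Omega\circ\mathfrak{h}\phi)$, so $\Omega\circ\mathfrak{h}\phi$ descends along the quotient map $\mathfrak{h}\Phi_n G\twoheadrightarrow\mathfrak{h}\Phi_n G/\mathfrak{b}\mathcal{N}_{\Phi_n G}\mathrm{T}$ to the dotted arrow $\Omega\mathfrak{h}\phi$, and the diagram commutes.
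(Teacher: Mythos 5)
Your outline follows the paper's skeleton (vanishing Toda bracket from the top four skeletal layers, ${\rm Indet}_m$ as its indeterminacy, then the formal ${\rm Sh}\rightsquigarrow{\rm T}$ machinery), but the step you yourself flag as ``the main obstacle'' --- showing $\bar{{\rm ev}}_m^\ast({\rm Indet}_m)=\{0\}$ --- is exactly where your argument breaks down, and the cellular bookkeeping you propose cannot repair it. The group ${\rm Indet}_m$ has two families of generators. For the precomposition family $(\Sigma^2 g_{m-2})^\ast[-,-]$, i.e.\ elements $u\circ {\rm sh}$, your argument works: composing with $\bar{{\rm ev}}_m$ produces ${\rm sh}\circ\bar{{\rm ev}}_m$, which is null by Corollary \ref{newnull0}. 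But for the postcomposition family $(\Sigma^3 g_{m-4})_\ast[(\Sigma X)^{\wedge m},\bigvee_{\binom{m-1}{2}}\Sigma^{-1}(\Sigma X)^{\wedge m}]$, a generator is $\Sigma^3 g_{m-4}\circ v$ with $v$ an \emph{arbitrary} map; after precomposition with $\bar{{\rm ev}}_m$ one gets $\Sigma^3 g_{m-4}\circ v\circ\bar{{\rm ev}}_m$, and the arbitrary $v$ sits between the two cellular maps, so the composite does \emph{not} factor through two consecutive maps of any cofibration of $\mathcal{F}(m)$. No amount of ``tracking the cellular incidence data across three consecutive boundary maps'' will make this family vanish, because $v$ carries no cellular information. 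The paper's proof of Lemma \ref{2ndnull} handles both families at once by a different mechanism: since the bracket contains $0$, it \emph{equals} ${\rm Indet}_m$ as a set, and Toda's juggling formula $\langle\gamma,\beta,\alpha\rangle_T\circ\Sigma\delta\subseteq\langle\gamma,\beta,\alpha\circ\delta\rangle_T$ slides $\bar{{\rm ev}}_m$ into the innermost slot, where $\Sigma g_{m-2}\circ\Sigma^{-1}\bar{{\rm ev}}_m$ is the (desuspended) shuffle relation and kills the entire bracket. You need this bracket-level argument, or some substitute for it, to dispose of the postcomposition family.

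Two smaller points. First, your justification that consecutive relative attaching maps compose to a null-homotopic map --- ``the degree matrices are cellular boundary matrices, hence multiply to zero, and maps between wedges of spheres are determined by degree matrices'' --- does not apply once you have smashed with $X^{\wedge m}$ and passed to $\Sigma_m$-orbits: the maps are then between wedges of suspensions of $X^{\wedge m}$, which are not determined by their effect on homology. The correct (and the paper's) reason is structural: in the ladder space $\Sigma D_m(X)$ the composite $\Sigma g_{i-1}\circ g_i$ factors through two consecutive maps of a cofibre sequence (this is the content of Corollary \ref{laddertoda}, via Spanier's splitting criterion, which also gives well-definedness and vanishing of the bracket in one stroke). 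Second, the suspension in $\Sigma D_m(X)$ is not cosmetic --- the paper explicitly discards the unsuspended ladder space because the $g_i$ only become composable after suspension --- so ``suspending as in Lemma \ref{mainlemma}'' should be made precise. The final formal paragraph (pulling back ${\rm T}$ along $\phi$, taking the normal bi-$\Delta$-extension, and descending $\Omega\circ\mathfrak{h}\phi$) does match the paper and is fine once the homotopy input is secured.
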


In conclusion, we have constructed the vanishing subgroup 
\[\mathfrak{n}=\langle \mathfrak{b}\mathcal{N}_{\Phi_n G}{\rm Sh},\mathfrak{b}\mathcal{N}_{\Phi_n G}{\rm T}\rangle_{N}\] 
for the key diagram (\ref{goal}). In particular, we have the following proposition which describes a way to construct null-homotopic maps by using the classical James-Hopf invariants.

\begin{proposition}[Proposition \ref{lastnull}]
The loop of any map
\[g : \Omega\Sigma^2X\rightarrow \Omega Z\]
with $[g]\in {\rm Im}~(\mathfrak{h}\phi:\mathfrak{n}\rightarrow [J(\Sigma X), \Omega Z])$ is null homotopic.

Further, for any map $f\in {\rm Sh}_{m}^{N}(\Sigma X; \Omega Z)+{\rm T}_{m}^{N}(\Sigma X; \Omega Z)\subseteq [(\Sigma X)^{\wedge (m+1)}, \Omega Z]$, the loop of the composition
\[\Omega\Sigma^2X \stackrel{H_{m+1}}{\longrightarrow} \Omega\Sigma\big((\Sigma X)^{\wedge(m+1)}\big) \stackrel{J(f)}{\longrightarrow}\Omega Z\]
is null homotopic on the natural $(m+1)$-th filtration $\mathcal{F}_{m+1}(X)$ of $\Omega^2\Sigma^2X\simeq \mathcal{F}(X)$, where $H_{m+1}$ is the James-Hopf invariant, $J(f)$ is the $H$-map such that $J(f)|_{(\Sigma X)^{\wedge(m+1)}}=f$, and ${\rm Sh}^{N}_m(\Sigma X; \Omega Z) (m\geq 1)$ and ${\rm T}_{m}^{N}(\Sigma X; \Omega Z) (m\geq 4)$ are defined to be the normal closures of the subgroups
\[{\rm Sh}_{m}(\Sigma X; \Omega Z)= \lbrack (\Sigma X)^{\wedge (m+1)}\stackrel{{\rm sh}}{\rightarrow}\bigvee_{m}(\Sigma X)^{\wedge (m+1)}\stackrel{g}{\rightarrow}\Omega Z~|~\forall g: \bigvee_{m}(\Sigma X)^{\wedge (m+1)}\rightarrow \Omega Z\rbrack,\]

\[{\rm T}_{m}(\Sigma X; \Omega Z)=
\lbrack (\Sigma X)^{\wedge (m+1)}\stackrel{h}{\rightarrow}\bigvee_{{m\choose{3}}}\Sigma^{-1}(\Sigma X)^{\wedge (m+1)}\stackrel{g}{\rightarrow}\Omega Z~|~\forall~g, {\rm and}~ \forall ~h\in {\rm Indet}_{m+1}\rbrack.\]
in $[(\Sigma X)^{\times (m+1)}, \Omega Z]$ respectively.
\end{proposition}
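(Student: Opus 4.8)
The plan is to prove the two assertions separately. The first is a formal consequence of the two commutative squares established in the shuffle and Toda theorems preceding the proposition, while the second requires an adjunction argument that, on the filtration $\mathcal{F}_{m+1}(X)$, reduces the looped composition to a precomposition of $f$ with the combinatorial evaluation map $\bar{{\rm ev}}_{m+1}$ of Lemma \ref{mainlemma}. For the first assertion, I would read each of those two squares as asserting that the corresponding subgroup lies in $\ker(\Omega\circ\mathfrak{h}\phi)$: the factorization through $\mathfrak{h}\Phi_n G/\mathfrak{b}\mathcal{N}_{\Phi_n G}{\rm Sh}$ forces $\mathfrak{b}\mathcal{N}_{\Phi_n G}{\rm Sh}\subseteq\ker(\Omega\circ\mathfrak{h}\phi)$, and similarly for $\mathfrak{b}\mathcal{N}_{\Phi_n G}{\rm T}$. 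Since a kernel is normal, it contains the normal closure $\mathfrak{n}=\langle\mathfrak{b}\mathcal{N}_{\Phi_n G}{\rm Sh},\mathfrak{b}\mathcal{N}_{\Phi_n G}{\rm T}\rangle_{N}$. Hence if $[g]=\mathfrak{h}\phi(x)$ with $x\in\mathfrak{n}$, then $[\Omega g]=\Omega[g]=\Omega(\mathfrak{h}\phi(x))=0$ in $[\Omega^2\Sigma^2 X,\Omega^2 Z]$, i.e. $\Omega g$ is null homotopic.

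For the second assertion, I would first pass to adjoints. Writing $h=J(f)\circ H_{m+1}$ and using $J(\Sigma X)\simeq\Omega\Sigma^2 X$, the looped map $\Omega h$ is adjoint to $h\circ{\rm ev}$, where ${\rm ev}\colon\Sigma\Omega^2\Sigma^2 X\to\Omega\Sigma^2 X\simeq J(\Sigma X)$ is the evaluation of Lemma \ref{mainlemma} (the counit of $\Sigma\dashv\Omega$). Thus $\Omega h|_{\mathcal{F}_{m+1}(X)}$ is null homotopic if and only if $(h\circ{\rm ev})|_{\Sigma\mathcal{F}_{m+1}(X)}$ is. Next I would invoke the defining filtration behaviour of the James-Hopf invariant: $H_{m+1}$ annihilates $J_m(\Sigma X)$ (a word of length $\le m$ has no $(m+1)$-fold subword), while it carries $J_{m+1}(\Sigma X)$ into the first filtration $J_1\big((\Sigma X)^{\wedge(m+1)}\big)=(\Sigma X)^{\wedge(m+1)}$, inducing the identity on the subquotient $J_{m+1}/J_m=(\Sigma X)^{\wedge(m+1)}$. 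Because ${\rm ev}$ is filtration preserving, $H_{m+1}\circ{\rm ev}$ is null on $\Sigma\mathcal{F}_m(X)$, so along the cofibration $\Sigma\mathcal{F}_m(X)\hookrightarrow\Sigma\mathcal{F}_{m+1}(X)$ it factors, up to homotopy, through the cofiber $\Sigma D_{m+1}(X)$; by Lemma \ref{mainlemma} the induced map is exactly $\bar{{\rm ev}}_{m+1}\colon\Sigma D_{m+1}(X)\to(\Sigma X)^{\wedge(m+1)}$, landing in $J_1$.

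It remains to show $f\circ\bar{{\rm ev}}_{m+1}$ is null for every $f\in{\rm Sh}_m^N(\Sigma X;\Omega Z)+{\rm T}_m^N(\Sigma X;\Omega Z)$. The crucial observation is that, since the image above lies in $J_1$, only the linear part $J(f)|_{J_1}=f$ of the $H$-map contributes; consequently the assignment $f\mapsto(J(f)\circ H_{m+1}\circ{\rm ev})|_{\Sigma\mathcal{F}_{m+1}(X)}$ coincides with precomposition $f\mapsto f\circ\bar{{\rm ev}}_{m+1}$ (followed by the quotient $\Sigma\mathcal{F}_{m+1}(X)\twoheadrightarrow\Sigma D_{m+1}(X)$), which is a group homomorphism because the group structure on the target is induced by the loop multiplication on $\Omega Z$. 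Its kernel is therefore normal. It contains the generators: for $f=g\circ{\rm sh}$ we get $g\circ({\rm sh}\circ\bar{{\rm ev}}_{m+1})$, null by the shuffle relation (Proposition \ref{shthm}) with index $m+1$; for $f=g\circ h$ with $h\in{\rm Indet}_{m+1}$ we get $g\circ(h\circ\bar{{\rm ev}}_{m+1})=0$ by the Toda relation (Theorem \ref{Todarelation}), since $\bar{{\rm ev}}_{m+1}^{\ast}({\rm Indet}_{m+1})=\{0\}$. A normal subgroup containing both sets of generators contains ${\rm Sh}_m^N$, ${\rm T}_m^N$ and their sum, completing the proof.

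I expect the main obstacle to be the filtration bookkeeping in the second paragraph: one must verify in the strong sense that $H_{m+1}\circ{\rm ev}$ lands \emph{exactly} in the first filtration on $\Sigma\mathcal{F}_{m+1}(X)$ and that the induced quotient map is \emph{precisely} the $\bar{{\rm ev}}_{m+1}$ of Lemma \ref{mainlemma}, not merely homotopic to it up to lower-order corrections; this exactness is what legitimises discarding the higher, non-additive terms of $J(f)$ and hence linearises the assignment $f\mapsto f\circ\bar{{\rm ev}}_{m+1}$. A secondary delicacy is the passage between the smash group $[(\Sigma X)^{\wedge(m+1)},\Omega Z]$ and the product group $[(\Sigma X)^{\times(m+1)},\Omega Z]$ in which the normal closures are defined; here one uses that $\bar{{\rm ev}}_{m+1}$ factors through the smash, so that the homomorphism above is genuinely defined on the product group and its kernel is identified with the stated normal closure by the bi-$\Delta$ detection procedure.
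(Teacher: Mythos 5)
Your proposal is correct and follows essentially the same route as the paper: the paper's proof is a one-line reduction citing the two null-homotopy statements (${\rm sh}\circ\bar{{\rm ev}}_{m+1}$ null from Corollary \ref{newnull0} and $h\circ\bar{{\rm ev}}_{m+1}$ null for $h\in{\rm Indet}_{m+1}$ from Lemma \ref{2ndnull}) together with the fact, quoted from \cite{Wu}, that $H_m(f)$ is represented by $J(f)\circ H_{m+1}$. You merely unpack that quoted fact via the adjunction and the filtration behaviour of the James--Hopf invariant, and make explicit the formal normal-closure and kernel arguments that the paper leaves implicit; the substance is identical.
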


Computations on homotopy exponent problems may be done by first writing the explicit formula for the shuffle relations and the secondary relations in the Cohen group $\mathfrak{h}\Phi_n G$, and then trying to check if the representative of $p^{r+1}[\Omega^2 f]$ can be expressed as compositions of these relations.

The paper is organized as follows. In Section \ref{Milsection}, we prove our combinatorial reformulation of Milgram's model by showing $\mathcal{L}$ is a preoperad of posets. We also prove some combinatorial aspects of our model. In Section \ref{combisection}, we review Cohen's combinatorial homotopy theory and also its generalization by Wu with our further study. We provide some useful definitions and lemmas to detect relations in Cohen groups which allow us to cover Cohen's original construction easily. In Section \ref{Eva+Shsection}, our aim is to develop a systematic way to detect relations in Cohen groups. We first use our model of a double loop suspension to give a nice description of the evaluation map. Then we turn to study the shuffle map of Wu as an example. We prove that our definition of the shuffle map using the attaching map of a permutohedron coincides with the original definition of Wu up to homotopy. We then construct shuffle relations in Cohen groups to illustrate our method. Section \ref{Todasection} is devoted to higher relations in Cohen groups by using more information of the cell complex of permutohedra. To this end, we introduce the notion of ladder spaces whose associated Toda brackets are always vanishing, and a type of secondary relations for Cohen groups are given. We end the paper with an appendix (Section \ref{AppendA}), where we discuss various aspects of combinatorial James-Hopf operations of abelian bi-$\Delta$-groups and also prove a proposition concerning the loop homomorphism.

\section{Milgram's model}\label{Milsection}
\subsection{Permutohedron and Milgram's combinatorial model of real configuration}\label{Permuto}
Since we adopt a combinatorial approach to study the double loop suspension, we first recall some background information on the permutohedra which serve as the building blocks of Milgram's model. The material in this subsection follows Section $1$ of \cite{KZ}. 

\begin{definition}
\textit{The permutohedron} $P_n$ is the convex hull of the set of points $\{v_\sigma\in\mathbb{R}^n~|~\sigma \in \Sigma_n\}$. 

Here $\Sigma_n$ is the $n$-th symmetric group, and $v_\sigma:=(\sigma^{-1}(1),\sigma^{-1}(2),\cdots, \sigma^{-1}(n))$.
\end{definition}
One can check that $P_n$ is a polytope of dimension $n-1$ which is contained in the hyperplane in $\mathbb{R}^n$ determined by $x_1+x_2+\cdots+x_n=\frac{(n+1)n}{2}$, and its faces of dimension $n-k$ are affinely isomorphic to some $P_{m_1}\times P_{m_2}\times \cdots\times P_{m_k}$.

Now we want to describe the face poset of $P_n$ for which we will use shuffles and unshuffles. 
\begin{definition}
\textit{An unshuffle of a sequence} $\phi=(\phi_1, \phi_2,\cdots,\phi_n)$ of integers is an ordered list of subsequences $s_1$, $s_2$, $\cdots$, $s_k$ of $\phi$ for some $k$ such that their disjoint union is equal to $\phi$. We call $\phi$ \textit{a shuffle of} $s_1$, $s_2$, $\cdots$, $s_k$, and may denote the unshuffle by $s_1|s_2|\cdots|s_k$.
\end{definition}

Let  ${\rm dSh}_{\phi}$ be the set of the unshuffles of $\phi$, we can endow a partial order $\prec$ on ${\rm dSh}_{\phi}$ by removing bars and shuffling the lists. That is, $\prec$ is the transitive closure of the relation
\begin{equation*}
s_1|\cdots|s_{i-1}|s_i|s_{i+1}|s_{i+2}|\cdots|s_k ~\prec~s_1|\cdots|s_{i-1}|h|s_{i+2}|\cdots|s_k, 
\end{equation*}
where $h$ is a shuffle of $s_{i}$ and $s_{i+1}$.

Now the face poset of $P_n$ can be identified with the poset of the unshuffles of $[n]=(1, 2, \cdots, n)$ which we denote by $\mathcal{L}_{\rm id}=({\rm dSh}_{\rm [n]}, \prec)$ (we identify a permutation with the sequence of its images). Under the identification, the vertex $v_\sigma=(\sigma^{-1}(1),\sigma^{-1}(2),\cdots, \sigma^{-1}(n))$ corresponds to $\sigma_1|\sigma_2|\cdots|\sigma_n$. For any face $f$, there is an unique minimal element $a\in \mathcal{L}_{\rm id}$ such that $a \succ b$ for any vertex $b$ of $f$ under the identification. Indeed, $f=P_{m_1}\times P_{m_2}\times \cdots\times P_{m_k}$ corresponds to some unshuffle $s_1|s_2|\cdots|s_k$ with each $s_i$ of length $m_i$. Hence, we may define the dimension (or degree) of $s_1|s_2|\cdots|s_k$ to be $n-k=$ (the length of the unshuffle) $-$ (the number of bars) $-1$. In particular, we may view this $P_n$ as labelled by ${\rm id}\in \Sigma_n$ under the correspondence. Then similarly for any $\sigma_n\in \Sigma_n$, we can define a poset $\mathcal{L}_{\sigma}=({\rm dSh}_{{\rm Im}\sigma}, \prec)$ corresponding to a $P_n$ labelled by $\sigma$. If we view the poset $\mathcal{L}_{\sigma}$ as a category in the standard way, this correspondence gives a geometric realization functor $\mathcal{F}$ from $\mathcal{L}_{\sigma}$ to the category of topological spaces. 

Now we can describe Milgram's model of the real configuration space $F(\mathbb{R}^2, n)$.
\begin{theorem}[Theorem $3.13$ in \cite{BZ}]\label{config}
The geometric realization $\mathcal{F}(n)=\mathcal{F}(\mathcal{L}(n))$ is homeomorphic to a strong deformation retract of $F(\mathbb{R}^2, n)$, where 
\begin{equation*}
\mathcal{L}(n)=\bigcup_{\sigma\in \Sigma_n} \mathcal{L}_\sigma
\end{equation*}
as posets and the geometric realization functor $\mathcal{F}$ is the natural generalization of the one over each $\mathcal{L}_\sigma$. 
\end{theorem}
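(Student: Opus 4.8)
The plan is to realize $\mathcal{F}(n)=|\mathcal{L}(n)|$ as the \emph{dual spine} of the Fox--Neuwirth stratification of $F(\mathbb{R}^2,n)$ and then to produce an explicit strong deformation retraction onto it. I would organize the argument in three stages, the first combinatorial and the last two geometric.

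First I would pin down both sides combinatorially. Writing each point of a configuration as $z_j=(x_j,y_j)\in\mathbb{R}^2$, the Fox--Neuwirth cells stratify $F(\mathbb{R}^2,n)$ by recording the ordered partition of $\{1,\dots,n\}$ obtained by grouping the indices sharing an $x$-coordinate (blocks ordered left to right) and ordering within each block by the $y$-coordinate. A stratum of type $s_1|\cdots|s_k$ then has exactly $k$ free $x$-coordinates and $n$ free $y$-coordinates, hence codimension $n-k$, which matches the dimension $n-k$ of the face of a permutohedron indexed by the same unshuffle. Thus the face poset of this stratification is precisely $\mathcal{L}(n)=\bigcup_{\sigma}\mathcal{L}_\sigma$: the top ($(n-1)$-dimensional) unshuffles $k=1$, i.e.\ the single-block orderings $\sigma$, correspond to the most degenerate strata (all $x$-coordinates equal, $y$-ordered by $\sigma$) and to the $n!$ copies $P_n^{(\sigma)}$, while the open strata (all $x$-coordinates distinct) correspond to the vertices. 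At this stage I would also verify that $\mathcal{F}(n)$ is a regular CW complex of dimension $n-1$: each $\mathcal{L}_\sigma$ realizes to $P_n$ because it is its face poset, and the copies are glued along the faces indexed by the common unshuffles $\mathcal{L}_\sigma\cap\mathcal{L}_\tau$, which is exactly the incidence pattern read off from the closures of the strata.

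Second I would embed $\mathcal{F}(n)$ into $F(\mathbb{R}^2,n)$ as a transverse spine, cell by cell and compatibly with the gluings, so that the $(n-k)$-cell indexed by $s_1|\cdots|s_k$ becomes a transverse slice to the corresponding codimension-$(n-k)$ stratum. The conceptual backbone is that $F(\mathbb{R}^2,n)$ is the complement of the complexified braid arrangement, whose zonotope is the permutohedron and whose real face poset is the ordered-partition poset $\mathcal{L}(n)$; the required spine is then a Salvetti-type complex for this arrangement. I would then build the strong deformation retraction by a normalized flow that equalizes the $x$-coordinates: after passing to a slice (say centroid at the origin and bounded spread, which is a deformation retract since translations and positive scalings act freely through a contractible group), push the vector $(x_1,\dots,x_n)$ toward its mean, but block the motion of any pair sharing an $x$-coordinate at the instant it would also share a $y$-coordinate. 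A configuration with all $y$-coordinates distinct collapses all the way onto the top stratum $P_n^{(\sigma)}$, where $\sigma$ is the $y$-order; a configuration with $y$-ties halts on the lower face dictated by its combinatorial type; and configurations already on the spine stay fixed. This flow is the identity on $\mathcal{F}(n)$ and has image $\mathcal{F}(n)$, giving the homeomorphism onto a strong deformation retract.

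I expect the main obstacle to lie in this last stage: arranging the equalizing flow so that it is simultaneously continuous across \emph{all} stratum boundaries and lands exactly on the combinatorially-glued complex $\mathcal{F}(n)$, rather than on some homeomorphic but differently parametrized spine. The delicate point is to match, near the closure of a low-dimensional stratum, the geometric face identifications of the permutohedra with the combinatorial identifications $\mathcal{L}_\sigma\cap\mathcal{L}_\tau$ of $\mathcal{L}(n)$, and to check that the halting rule varies continuously as $y$-ties are approached (so that the retraction does not jump between faces). The cleanest way to discharge this is to invoke the Salvetti deformation retraction for the braid arrangement and then identify its underlying complex with $|\mathcal{L}(n)|$ via the dictionary of the first stage; once continuity and the precise landing set are established, the assertion of Theorem \ref{config} follows.
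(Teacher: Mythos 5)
The paper does not prove this statement itself---it quotes it verbatim from Theorem 3.13 of \cite{BZ}---but the stratification it records later, in the proof of Theorem \ref{Milgram} (the cells $F^a_l$ cut out by the anti-lexicographic order on $\mathbb{R}^2$, whose \emph{reversed} stratum poset is $\mathcal{L}(n)$ and whose dual complex is the permutohedral spine), is exactly the Fox--Neuwirth picture your proposal is built on, so you are reconstructing the cited argument along essentially the same lines. The only deviations are conventions rather than gaps: you group points by shared $x$-coordinate where the paper uses the second coordinate, and your ``face poset of the stratification'' is really its order-reverse (codimension-$(n-k)$ strata dual to dimension-$(n-k)$ cells), both of which your subsequent identifications handle correctly.
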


We notice that by definition, the poset $\mathcal{L}(n)$ consists of all the permutations of length $n$ with any arrangement of possible bars as sets, and there is a natural action of $\Sigma_n$ on $\mathcal{F}(n)$. Furthermore, since the geometric realization is affine, $\mathcal{F}(n)$ inherits a $\Sigma_n$-action which is also free. Indeed, the homotopy in the last theorem can be chosen to be $\Sigma_n$-equivariant by Fox-Neuwirth stratification.

\subsection{Milgram's preoperad and the model of double loop suspensions}
In this subsection, we use the language of preoperads to reproduce Milgram's model for double loop suspensions. One version of this approach is made by Berger in \cite{Berger}, where the permutohedron $P_n$ is viewed as the set of right cosets of subgroups $\Sigma_{i_1}\oplus\Sigma_{i_2}\oplus\cdots\Sigma_{i_k}$ of $\Sigma_n$ with inclusion as the partial order. In contrast, here we use the geometric realization in Section \ref{Permuto} to obtain a combinatorial model.

\begin{definition}	
A \textit{(based) preoperad} with values in a category $\mathcal{C}$ is a contravariant functor $\mathcal{O}: \mathbf{\Lambda}\rightarrow \mathcal{C}$,
where $\mathbf{\Lambda}$ is defined to be the category whose objects are based finite sets $\mathbf{k}=\{0, 1, 2,\ldots, k\}$ with $0$ the based point and whose morphisms are based injective maps. A \textit{map of preoperads} is a natural transformation of functors.

We may write $\mathcal{O}_k$ to be the image of $\mathbf{k}$ and $\phi^\ast: \mathcal{O}_l\rightarrow \mathcal{O}_k$ to be the image of $\phi: \mathbf{k}\rightarrow \mathbf{l}$ under $\mathcal{O}$. 
\end{definition}

\begin{remark}
The terminology preoperad was suggested by Berger according to the fact that an operad is a preoperad by forgetting all the composition operations. This point was observed even much earlier by Cohen, May and Taylor \cite{CMT} who called a preoperad a coefficient system and used it to generalize May's method \cite{May} to a larger context.  
\end{remark}

For the category $\mathbf{\Lambda}$, we notice that for each morphism $\phi\in\mathbf{\Lambda}(\mathbf{k}, \mathbf{l})$, we have a unique decomposition 
\begin{equation*}
  \xymatrix{
  \mathbf{k} \ar[rr]^{\phi} \ar[dr]_{\phi^{\sharp}}& &\mathbf{l} \\
  & \mathbf{k} \ar[ur]_{\phi^{{\rm inc}}} 
  } 
\end{equation*}
such that $\phi^\sharp\in \mathbf{\Lambda}(\mathbf{k}, \mathbf{k})$, $\phi^{{\rm inc}}\in \mathbf{\Lambda}(\mathbf{k}, \mathbf{l})$ and $\phi^\sharp$ is a permutation in $\Sigma_k$ (by forgetting the based point) and $\phi^{{\rm inc}}$ is an increasing map.

\begin{example}[cf. Example $1.5$ in \cite{Berger}]

$(1)$ The collection of configuration spaces defines a topological preoperad $F(\mathbb{R}^n,-): \mathbf{\Lambda}\rightarrow \mathbf{Top}$,  where $F(\mathbb{R}^n,-)$ sends $\mathbf{k}$ to $F(\mathbb{R}^n,k)$, and for any morphism $\phi\in \mathbf{\Lambda}(\mathbf{k}, \mathbf{l})$, we have 
\begin{eqnarray*}
\phi^\ast &:& F(\mathbb{R}^n,l)\rightarrow F(\mathbb{R}^n,k)\\
&&(t_1,t_2,\ldots, t_l) \mapsto (t_{\phi(1)},t_{\phi(2)},\ldots, t_{\phi(k)}).
\end{eqnarray*}
We call $F(\mathbb{R}^n,-)$ defined as above \textit{the configuration preoperad}.

$(2)$ The collection of symmetric groups defines a set-valued preoperad $\Sigma: \mathbf{\Lambda}\rightarrow \mathbf{Sets}$, where $\Sigma$ sends $\mathbf{k}$ to $\Sigma_k$, and for any morphism $\phi\in \mathbf{\Lambda}(\mathbf{k}, \mathbf{l})$, we have 
\begin{eqnarray*}
\phi^\ast &:& \Sigma_l\rightarrow \Sigma_k\\
&&\sigma \mapsto \big((\sigma^{-1}\circ\phi)^\sharp\big)^{-1}.
\end{eqnarray*}
We call $\Sigma$ defined as above \textit{the permutation preoperad}.
\end{example}

In order to obtain a combinatorial model of the double loop suspension, we want to endow a preoperadic structure on the collection of $\mathcal{L}(l)$'s which are $\Sigma_l$-equivariant homotopy deformation retracts of $F(\mathbb{R}^2, l)$'s respectively. As we have pointed out before, $\mathcal{L}(l)$ roughly is $\Sigma_l$ plus bar arrangements. Hence, we introduce the following diagram notation to represent each element $a\in \mathcal{L}(l)$:
\begin{equation*}
  \xymatrix{
 \textbf{l} \ar[r]^{\tilde{a}} \ar@{->>}[d]^{\pi_a} &\textbf{l}\\
 \mathbf{\ubar{t}},
  }
\end{equation*}
where $\tilde{a}$ is obtained by removing all the bars in $a$, $\mathbf{\ubar{t}}=\{1,2,\cdots, t\}$, and the morphism $\pi_a$ is the numbering of ordered subsequences of $a$ (by ignoring the base element $0$). For instance, $235|741|6\in \mathcal{L}(7)$ is presented by $2357416$ and the map $\pi: \mathbf{\ubar{7}}\twoheadrightarrow \mathbf{\ubar{3}}$ defined by $1,2,3\mapsto 1$, $4, 5, 6\mapsto 2$ and $7\mapsto 3$. We see $\pi_a$ is a nondecreasing map in general.

Now for any map $\phi\in \mathbf{\Lambda}(\textbf{k},\textbf{l})$, we have the commutative diagram 
\begin{equation}\label{pullbackJ}
   \xymatrix{
   \textbf{l} \ar[rr]^{\tilde{a}^{-1}} &&\textbf{l} \ar@{->>}[rr]^{\pi_a} &&  \mathbf{\ubar{t}}\\
   \textbf{k} \ar[rr]_{(\tilde{a}^{-1}\circ \phi)^\sharp=\phi^\ast(\tilde{a})^{-1}} \ar[u]_{\phi} &&
   \textbf{k} \ar@{->>}[rr]_{\pi_{\phi^\ast a}} \ar[u]_{(\tilde{a}^{-1}\circ \phi)^{{\rm inc}}}&&
   \mathbf{\ubar{t^\prime}} \ar[u]_{\pi^{{\rm inc}}},
   }
\end{equation}
where in the right square the morphisms $\pi_{\phi^\ast a}$ and $\pi^{{\rm inc}}$ are uniquely determined by the other two maps and $\pi^{{\rm inc}}$ is an increasing map. We then can define a morphism 
\begin{eqnarray*}
\phi^\ast&:&\ \ \  \ \ \ \mathcal{L}(l)\ \  \ \longrightarrow  \ \ \ \ \mathcal{L}(k)\\
  & & a=(\tilde{a}, \pi_a) \longmapsto\phi^\ast a=(\phi^\ast(\tilde{a}), \pi_{\phi^\ast a}).
\end{eqnarray*}

\begin{proposition}\label{preoperadJ}
The above construction defines a preoperad $\mathcal{L}: \mathbf{\Lambda}\rightarrow \mathbf{PSet}$ where $\mathbf{PSet}$ is the category of poset and order preserving morphisms. 
\end{proposition}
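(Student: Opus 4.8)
The plan is to verify the three defining properties of a contravariant functor $\mathcal{L}\colon \mathbf{\Lambda}\to\mathbf{PSet}$: that each assignment $\phi^\ast$ is (i) well defined as a map of underlying sets, (ii) order preserving, and (iii) compatible with identities and with contravariant composition. That each $\mathcal{L}(k)$ is a poset is already built into its definition as $(\mathrm{dSh},\prec)$, since $\prec$ is the transitive closure of a relation that strictly decreases the number of bars and hence admits no nontrivial cycles.

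For well-definedness I would argue purely through the two factorization systems appearing in diagram (\ref{pullbackJ}). Since $\tilde a$ is a bijection and $\phi$ a based injection, the composite $\tilde a^{-1}\circ\phi$ is again a based injection, so the canonical factorization in $\mathbf{\Lambda}$ recalled above the proposition splits it uniquely as $(\tilde a^{-1}\circ\phi)^{\mathrm{inc}}\circ(\tilde a^{-1}\circ\phi)^{\sharp}$ with permutation part $(\tilde a^{-1}\circ\phi)^{\sharp}=\phi^\ast(\tilde a)^{-1}$. The second factorization is the epi--mono factorization in the category of finite ordered sets and nondecreasing maps: the composite $\pi_a\circ(\tilde a^{-1}\circ\phi)^{\mathrm{inc}}$ is nondecreasing and splits uniquely as a nondecreasing surjection $\pi_{\phi^\ast a}$ followed by an increasing injection $\pi^{\mathrm{inc}}$. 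Uniqueness of both factorizations makes $\phi^\ast a=(\phi^\ast(\tilde a),\pi_{\phi^\ast a})$ a well-defined element of $\mathcal{L}(k)$. It is worth unwinding the resulting recipe once and for all: $\phi^\ast a$ is the barred permutation obtained by restricting $a$ to the value set $\mathrm{Im}\,\phi$, keeping the nonempty intersections of the blocks of $a$ with $\mathrm{Im}\,\phi$ in their given order (this is what $\pi_{\phi^\ast a}$ records) and relabelling the surviving values by their relative order (this is what $\phi^\ast(\tilde a)$ records). I will use this description, rather than the factorizations directly, in the remaining steps.

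Order preservation is the crux, and I would reduce it to the generating covering relations. Suppose $a\prec b$ is obtained by replacing two adjacent blocks $s_i,s_{i+1}$ of $a$ by a single block $h$ which is a shuffle of $s_i$ and $s_{i+1}$. Applying the restriction recipe, every block of $a$ other than $s_i,s_{i+1}$ meets $\mathrm{Im}\,\phi$ in exactly the same way in $a$ and in $b$, so it contributes identically to $\phi^\ast a$ and to $\phi^\ast b$; meanwhile $s_i\cap\mathrm{Im}\,\phi$ and $s_{i+1}\cap\mathrm{Im}\,\phi$ are adjacent blocks of $\phi^\ast a$, while $h\cap\mathrm{Im}\,\phi=(s_i\cup s_{i+1})\cap\mathrm{Im}\,\phi$ is their merge in $\phi^\ast b$. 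The key elementary observation is that the restriction of a shuffle of $s_i$ and $s_{i+1}$ to a subset is a shuffle of the corresponding restrictions. Hence if both $s_i\cap\mathrm{Im}\,\phi$ and $s_{i+1}\cap\mathrm{Im}\,\phi$ are nonempty we obtain a genuine covering $\phi^\ast a\prec\phi^\ast b$, and if at least one of them is empty the merge is invisible and $\phi^\ast a=\phi^\ast b$. Either way $\phi^\ast a\preceq\phi^\ast b$, and since $\prec$ is the transitive closure of these coverings, $\phi^\ast$ is order preserving.

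Finally I would check functoriality. The identity case is immediate: for $\phi=\mathrm{id}$ both factorizations are trivial, so $\mathrm{id}^\ast a=a$. For a composite $\mathbf{k}\xrightarrow{\phi}\mathbf{l}\xrightarrow{\psi}\mathbf{m}$ I would paste the defining squares of (\ref{pullbackJ}) for $\psi$ and for $\phi$ on top of one another and compare the result with the single square for $\psi\circ\phi$; uniqueness of the two factorization systems then forces $(\psi\circ\phi)^\ast=\phi^\ast\circ\psi^\ast$ componentwise. On the permutation component this is exactly the statement that the permutation preoperad $\Sigma$ is functorial, and on the $\pi$ component it is the associativity (pasting) property of the surjection/increasing-injection factorization. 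The one place that requires genuine care, and which I expect to be the main obstacle, is order preservation: the subtlety is that the underlying permutation $\tilde a$ can change under a covering relation (shuffling reorders values), so the argument must be phrased in terms of blocks-as-value-sets and the behaviour of shuffles under restriction, rather than in terms of the pair $(\tilde a,\pi_a)$ directly. Granting this, $\mathcal{L}\colon\mathbf{\Lambda}\to\mathbf{PSet}$ is a contravariant functor, that is, a preoperad of posets.
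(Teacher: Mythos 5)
Your proof is correct, but it takes a genuinely different route from the paper's. The paper keeps everything in the morphism--pair presentation $a=(\tilde a,\pi_a)$: it first characterizes the \emph{full} relation $a\prec b$ by two conditions (the existence of a nondecreasing surjection $\rho$ with $\rho\circ\pi_a\circ\tilde a^{-1}=\pi_b\circ\tilde b^{-1}$, plus an order condition on $\tilde b^{-1}\circ\tilde a$ within blocks), reduces order preservation to producing a compatible surjection $\rho'$ on the quotient sets, and then constructs $\rho'$ by introducing the auxiliary elements $a'=(\tilde b,\pi_a)$ and $b'=(\tilde a,\pi_b)$ and chasing two further diagrams. You instead unwind the defining diagram into the concrete recipe ``restrict $a$ to $\mathrm{Im}\,\phi$, delete empty blocks, relabel,'' reduce to the generating covering relations (which is legitimate, since $\prec$ is a transitive closure and it suffices to check that each generator is sent to $\preceq$), and invoke the elementary fact that the restriction of a shuffle is a shuffle of the restrictions, with the degenerate cases collapsing to equality. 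Your argument is more transparent and avoids the auxiliary constructions, at the cost of two small informalities you should patch in a final writeup: (i) the ``restriction recipe'' should be explicitly verified against diagram (\ref{pullbackJ}) via the uniqueness of the two factorizations (it is true, but you assert it); and (ii) for a non-increasing $\phi$ the relabelling is by $\phi^{-1}$ on $\mathrm{Im}\,\phi$ rather than by relative order --- harmless for order preservation, since the same fixed bijection is applied to $\phi^\ast a$ and $\phi^\ast b$, but worth stating precisely. What the paper's longer argument buys in exchange is an explicit characterization of the partial order in terms of the pairs $(\tilde a,\pi_a)$, which it reuses; your approach does not need such a characterization at all.
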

\begin{proof}
It is straightforward to check the axioms of a functor once we prove that $\phi^\ast$ is order preserving. Hence we only prove this part. Suppose $a\prec b$ in $\mathcal{L}(l)$, we want to show $\phi^\ast(a)\prec \phi^\ast(b)$ in $\mathcal{L}(k)$. Since we use pairs of morphisms to present elements of $\mathcal{L}(l)$, we need to describe the order relation under this setting. Suppose $a$ and $b$ are respectively presented by
\begin{equation*}
  \xymatrix{
 \textbf{l} \ar[r]^{\tilde{a}} \ar@{->>}[d]^{\pi_a} &\textbf{l} &&  \textbf{l} \ar[r]^{\tilde{b}} \ar@{->>}[d]^{\pi_b} &\textbf{l}&\\
 \mathbf{\ubar{t}}&& {\rm and}&\mathbf{\ubar{s}}&,
  }
\end{equation*}
then it is not hard to show that $a\prec b$ if and only if the following two conditions hold:

$1)$ there exists some nondecreasing surjection $\rho: \mathbf{\ubar{t}}\twoheadrightarrow \mathbf{\ubar{s}}$ such that $\rho\circ \pi_a \circ \tilde{a}^{-1} =\pi_b \circ \tilde{b}^{-1}$;

$2)$ whenever $\pi_a(i)=\pi_a(i+1)$, then $\tilde{b}^{-1}(\tilde{a}(i))<\tilde{b}^{-1}(\tilde{a}(i+1))$.

(Notice that condition $1)$ just means that the subsequences in $a$ can be transformed to that in $b$ by merging some adjacent ones under $\rho$, and condition $2)$ means this transformation preserves the order of elements in each subsequence of $a$.)

Then by hypothesis, we can form the following commutative diagram 

\begin{equation*}
   \xymatrix{
     \textbf{k} \ar[rr]^{\phi} \ar[d]_{(\tilde{a}^{-1}\circ \phi)^\sharp} && 
     \textbf{l} \ar@{=}[rr] \ar[d]^{\tilde{a}^{-1}}  &&
     \textbf{l} \ar[d]^{\tilde{b}^{-1}}  &&
     \textbf{k} \ar[ll]_{\phi}  \ar[d]^{(\tilde{b}^{-1}\circ \phi)^\sharp} \\
     \textbf{k} \ar[rr]_{(\tilde{a}^{-1}\circ \phi)^{{\rm inc}}} \ar@{->>}[d]_{\pi_{\phi^\ast a}} &&
     \textbf{l} \ar@{->>}[d]^{\pi_a}  &&
     \textbf{l} \ar@{->>}[d]^{\pi_b} &&
     \textbf{k} \ar[ll]^{(\tilde{b}^{-1}\circ \phi)^{{\rm inc}}} \ar@{->>}[d]^{\pi_{\phi^\ast b}} \\
     \mathbf{\ubar{t^\prime}} \ar[rr]_{\pi^{{\rm inc}}}  \ar@{.>}@/_1pc/[rrrrrr]_{\rho^\prime}&&
     \mathbf{\ubar{t}} \ar@{->>}[rr]^{\rho} &&
     \mathbf{\ubar{s}}&&
     \mathbf{\ubar{s^\prime}} \ar[ll]^{\pi^{{\rm inc}}},
   }
\end{equation*} 
where the left squares and right squares indicate the effect of $\phi^\ast$ on $a$ and $b$ respectively, and the middle squares reflect condition $1)$.  Our goal is to construct a nondecreasing surjection $\rho^\prime: \mathbf{\ubar{t^\prime}} \twoheadrightarrow \mathbf{\ubar{s^\prime}}$, such that $ \rho \circ \pi^{{\rm inc}}=\pi^{{\rm inc}}\circ \rho^\prime$, which is sufficient to prove the proposition. For if such $\rho^\prime$ exists,  then 
\begin{eqnarray*}
 \pi^{{\rm inc}} \circ \rho^\prime \circ \pi_{\phi^\ast a} \circ \widetilde{\phi^\ast a}^{-1}
&=& \rho \circ \pi^{{\rm inc}}\circ \pi_{\phi^\ast a}  \circ (\tilde{a}^{-1}\circ \phi)^\sharp\\
&=&  \rho \circ \pi_{ a} \circ\tilde{a}^{-1}\circ \phi\\
&=& \pi_{ b} \circ\tilde{b}^{-1}\circ \phi\\
&=&   \pi^{{\rm inc}} \circ \pi_{\phi^\ast b} \circ \widetilde{\phi^\ast b}^{-1}.
\end{eqnarray*}
Since $ \pi^{{\rm inc}}$ is an injection, we have $\rho^\prime \circ \pi_{\phi^\ast a} \circ \widetilde{\phi^\ast a}^{-1}= \pi_{\phi^\ast b} \circ \widetilde{\phi^\ast b}^{-1}$, i.e., condition $1)$ is satisfied. For condition $2)$, we have $\widetilde{\phi^\ast b}^{-1}\circ \widetilde{\phi^\ast a}=
 \big((\tilde{a}^{-1}\circ \phi)^{{\rm inc}}\big)^{-1}\circ \tilde{b}^{-1}\circ \tilde{a}\circ (\tilde{a}^{-1}\circ \phi)^{{\rm inc}}$ where the first and the last maps are increasing maps. Hence condition $2)$ is also satisfied, and we have $\phi^\ast(a)\prec \phi^\ast(b)$.
 
 Now the remainder of the proof is devoted to constructing the required map $\rho^\prime$, for which we introduce two elements of $\mathcal{L}(l)$ related to $a$ and $b$:
\begin{equation*}
  \xymatrix{
 a^\prime:= &\textbf{l} \ar[r]^{\widetilde{a^\prime}=\tilde{b}} \ar@{->>}[d]^{\pi_{a^\prime}=\pi_a} &\textbf{l} &&b^\prime:=&  \textbf{l} \ar[r]^{\widetilde{b^\prime}=\tilde{a}} \ar@{->>}[d]^{\pi_{b^\prime}=\pi_b} &\textbf{l}&\\
 &\mathbf{\ubar{t}}&& {\rm and}&&\mathbf{\ubar{s}}&,
  }
\end{equation*}
We see that $a^\prime$ can be obtained by adding more bars in $b$, and $b^\prime$ can be obtained by removing some bars in $a$. Hence, $a^\prime \prec b$ and $a \prec b^\prime$ (but $a^\prime \not\prec b^\prime$. For instance, let $a=235|71|4|6$ and $b=235|471|6$, then $a^\prime=235|47|1|6$ and $b^\prime=235|714|6$). For $a^\prime$ and $b$, we have the commutative diagram 
\begin{equation*}
   \xymatrix{
   \textbf{l} \ar[rr]^{\tilde{b}^{-1}} &&
   \textbf{l} \ar@{->>}[rr]^{\pi_a} \ar@{->>}@/^1pc/[rrrr]^{\pi_b}&&  
   \mathbf{\ubar{t}}\ar@{->>}[rr]^{\rho} &&
   \mathbf{\ubar{s}}\\
   \textbf{k} \ar[rr]_{(\tilde{b}^{-1}\circ \phi)^\sharp} \ar[u]_{\phi} &&
   \textbf{k} \ar@{->>}[rr]_{\pi_{\phi^\ast a^\prime}} \ar[u]_{(\tilde{b}^{-1}\circ \phi)^{{\rm inc}}} \ar@{->>}@/_1pc/[rrrr]_{\pi_{\phi^\ast b}}&&
   \mathbf{\ubar{t^{\prime\prime}}} \ar[u]_{\pi^{{\rm inc}}} \ar@{.>>}[rr]_{\rho^{\prime\prime}}&&
   \mathbf{\ubar{s^\prime}} \ar[u]_{\pi^{{\rm inc}}},
   }
\end{equation*}
where the unique existence of $\rho^{\prime\prime}$ is ensured by the definition of $\pi$ and the injection $(\tilde{b}^{-1}\circ \phi)^{{\rm inc}}$ (because we may view the effect of $\rho$ as removing bars or merging subsequences, then under $(\tilde{b}^{-1}\circ \phi)^{{\rm inc}}: \textbf{k}\rightarrow \textbf{l}$ the effect of $\rho$ restricts to the effect of some morphism $\rho^{\prime\prime}$ and the commutativity follows from the uniqueness of the morphisms). Similarly, we have the following commutative diagram for $a$ and $b^\prime$:
\begin{equation*}
   \xymatrix{
   \textbf{l} \ar[rr]^{\tilde{a}^{-1}} &&
   \textbf{l} \ar@{->>}[rr]^{\pi_a} \ar@{->>}@/^1pc/[rrrr]^{\pi_b}&&  
   \mathbf{\ubar{t}}\ar@{->>}[rr]^{\rho} &&
   \mathbf{\ubar{s}}\\
   \textbf{k} \ar[rr]_{(\tilde{a}^{-1}\circ \phi)^\sharp} \ar[u]_{\phi} &&
   \textbf{k} \ar@{->>}[rr]_{\pi_{\phi^\ast a}} \ar[u]_{(\tilde{a}^{-1}\circ \phi)^{{\rm inc}}} \ar@{->>}@/_1pc/[rrrr]_{\pi_{\phi^\ast b^\prime}}&&
   \mathbf{\ubar{t^{\prime}}} \ar[u]_{\pi^{{\rm inc}}} \ar@{.>>}[rr]_{\rho^{\prime\prime\prime}}&&
   \mathbf{\ubar{s^{\prime\prime}}} \ar[u]_{\pi^{{\rm inc}}}.
   }
\end{equation*}
Since $a\prec b$, we have $\pi_b\circ\tilde{b}^{-1}\circ\tilde{a}=\pi_b$, which implies $\pi_{\phi^\ast b^\prime}\circ\widetilde{\phi^\ast b}^{-1}\circ\widetilde{\phi^\ast a}=\pi_{\phi^\ast b}$ by combining the above two diagrams together. In particular, we have $\mathbf{\ubar{s^{\prime\prime}}}=\mathbf{\ubar{s^{\prime}}}$ and $\rho^{\prime}=\rho^{\prime\prime\prime}$ is what we want.
\end{proof}

\begin{corollary}[cf. Definition $1.6$ in \cite{Berger}]\label{preoperadF}
There exists a topological preoperad $\mathcal{F}=\mathcal{F}^2: \mathbf{\Lambda}\rightarrow \mathbf{Top_\ast}$ sending $\mathbf{n}$ to $\mathcal{F}(n)$, which is the geometric realization of the preoperad $\mathcal{L}$.
\end{corollary}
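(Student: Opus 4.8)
The plan is to obtain $\mathcal{F}$ as the post-composition of the preoperad $\mathcal{L}$ with the geometric realization functor, so that the corollary becomes a formal consequence of Proposition \ref{preoperadJ}. First I would recall that geometric realization of nerves defines a covariant functor $|{-}|:\mathbf{PSet}\rightarrow\mathbf{Top}$: each poset $P$ is regarded as a small category, its nerve $N(P)$ is a simplicial set (the order complex of $P$), and $|N(P)|$ is a topological space; an order-preserving map $f:P\rightarrow Q$ is a functor, hence induces a simplicial map $N(f)$ and a continuous map $|f|$, and this assignment manifestly preserves identities and composition. This is precisely the functor already invoked to define $\mathcal{F}(n)=\mathcal{F}(\mathcal{L}(n))=|\mathcal{L}(n)|$ in Section \ref{Permuto} and Theorem \ref{config}.

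Next, Proposition \ref{preoperadJ} provides $\mathcal{L}:\mathbf{\Lambda}\rightarrow\mathbf{PSet}$ as a contravariant functor, assigning to each $\phi\in\mathbf{\Lambda}(\mathbf{k},\mathbf{l})$ an order-preserving map $\phi^\ast:\mathcal{L}(l)\rightarrow\mathcal{L}(k)$ compatibly with composition and identities. Post-composing with $|{-}|$ yields a contravariant functor $\mathcal{F}:=|{-}|\circ\mathcal{L}:\mathbf{\Lambda}\rightarrow\mathbf{Top}$, which is by definition a topological preoperad. On objects one reads off $\mathcal{F}(\mathbf{n})=|\mathcal{L}(n)|=\mathcal{F}(n)$, matching the notation of Theorem \ref{config}, and for each $\phi$ the structure map is $|\phi^\ast|:\mathcal{F}(l)\rightarrow\mathcal{F}(k)$. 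The two preoperad axioms, $\mathrm{id}^\ast=\mathrm{id}$ and $(\psi\circ\phi)^\ast=\phi^\ast\circ\psi^\ast$, are inherited verbatim from $\mathcal{L}$ because $|{-}|$ preserves identities and composition.

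The only point requiring a small additional comment is the passage from $\mathbf{Top}$ to $\mathbf{Top_\ast}$. Here I would note that $\mathcal{L}(0)$ is the one-element poset of unshuffles of the empty sequence, so $\mathcal{F}(0)=|\mathcal{L}(0)|$ is a point, and a based structure on the values can be pinned down compatibly with the morphisms of $\mathbf{\Lambda}$ in the standard way for this model, consistent with the premonad construction of Theorem \ref{Milgram}. Since each $\phi^\ast$ is a morphism of the underlying posets, its realization $|\phi^\ast|$ automatically respects the chosen basepoints, so no extra compatibility needs to be checked by hand.

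I do not expect a genuine obstacle here: the substantive combinatorial content, namely that every $\phi^\ast$ is order-preserving, was already established in Proposition \ref{preoperadJ}, and geometric realization is a well-behaved functor. The only items demanding care are bookkeeping—fixing the based structure coherently and confirming the identification $\mathcal{F}(\mathbf{n})=\mathcal{F}(n)$ with the $\Sigma_n$-equivariant deformation retract of $F(\mathbb{R}^2,n)$ from Theorem \ref{config}—rather than any serious argument.
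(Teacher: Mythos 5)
Your proposal is correct and is essentially the paper's own argument: the paper's proof simply cites the combination of the realization functor from Section \ref{Permuto}, Theorem \ref{config}, and Proposition \ref{preoperadJ}, which amounts to post-composing the poset-valued preoperad $\mathcal{L}$ with the (functorial) geometric realization, exactly as you spell out. Your extra remarks on basepoints and on functoriality of realization are just the bookkeeping the paper leaves implicit.
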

\begin{proof}
This corollary follows by the combination of the discussion in Section \ref{Permuto}, Theorem \ref{config} and Proposition \ref{preoperadJ}.
\end{proof}
We can now construct the combinatorial model of double loop suspensions in a standard way which is essentially the well-known Milgram model \cite{Milgram}. 
\begin{theorem}[cf. Proposition $1.7$ in \cite{Berger}]\label{Milgram}
For any connected space $X$,  form the coend $\mathcal{F}(X):=\coprod_{n\geq 1}\mathcal{F}(n)\times X^{\times n}/\sim$ (Definition $2.1$ in \cite{CMT}), where the equivalence relation is specified by 
\begin{equation*}
(\phi^\ast a, (x_1, x_2, \ldots, x_k))\sim (a, \phi_\ast(x_1, x_2, \ldots, x_k)) \ {\rm for} \  \phi\in \mathbf{\Lambda}(\mathbf{k},\mathbf{l}), a\in \mathcal{F}(l),
\end{equation*}
and $\phi_\ast(x_1, x_2, \ldots, x_k)=(x_1^\prime, x_2^\prime, \ldots, x_l^\prime)$ such that $x_{\phi(i)}^\prime=x_i$ and $x_j^\prime=\ast$ if $j\not\in {\rm Im} \phi$.
Then we have 
\begin{equation*}
\mathcal{F}(X)\simeq \Omega^2\Sigma^2X.
\end{equation*}
\end{theorem}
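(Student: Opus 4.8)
The plan is to compare the coend $\mathcal{F}(X)$ with the corresponding coend built from the configuration preoperad $F(\mathbb{R}^2,-)$, and then to invoke the classical approximation theorem of May \cite{May} in the coefficient-system form of Cohen--May--Taylor \cite{CMT}. By Corollary \ref{preoperadF}, $\mathcal{F}$ is a topological preoperad obtained as the geometric realization of $\mathcal{L}$, and by Theorem \ref{config} each $\mathcal{F}(n)$ is a $\Sigma_n$-equivariant strong deformation retract of $F(\mathbb{R}^2,n)$, where the retraction can be taken compatible with the preoperad structure maps $\phi^\ast$. Thus the inclusion $\iota\colon \mathcal{F}\hookrightarrow F(\mathbb{R}^2,-)$ is a map of topological preoperads which is levelwise a $\Sigma_n$-equivariant homotopy equivalence.

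First I would reduce the theorem to two assertions: (i) the coend construction sends such a levelwise equivariant equivalence of preoperads to a weak homotopy equivalence $\mathcal{F}(X)\xrightarrow{\ \simeq\ }F(\mathbb{R}^2,-)(X)$, and (ii) the target coend is a model for $\Omega^2\Sigma^2 X$. For (i) I would use the word-length filtration of the coend. Filtering by the number $n$ of coordinates gives the subspaces $\mathcal{F}_n(X)=\coprod_{1\le k\le n}\mathcal{F}(k)\times X^{\times k}/\sim$, together with the standard identification of the filtration quotients
\[
\mathcal{F}_n(X)/\mathcal{F}_{n-1}(X)\ \cong\ \mathcal{F}(n)^{+}\wedge_{\Sigma_n}X^{\wedge n},
\]
and the analogous identification for $F(\mathbb{R}^2,-)$. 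Because the $\Sigma_n$-actions on $\mathcal{F}(n)$ and $F(\mathbb{R}^2,n)$ are free and the deformation retract is $\Sigma_n$-equivariant, $\iota$ induces a homotopy equivalence on each quotient $\mathcal{F}(n)^{+}\wedge_{\Sigma_n}X^{\wedge n}\to F(\mathbb{R}^2,n)^{+}\wedge_{\Sigma_n}X^{\wedge n}$.

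Since the filtration inclusions are cofibrations, coming from the polytopal cell structure of $\mathcal{F}(n)$, an induction on $n$ together with a gluing argument upgrades these equivalences on subquotients to a weak equivalence of the total spaces, proving (i). For (ii) I would invoke the approximation theorem as established in \cite{CMT}, where it is observed that May's recognition $C_2 X\simeq \Omega^2\Sigma^2 X$ \cite{May} depends only on the underlying coefficient system (preoperad) of the little $2$-cubes operad, namely on the $\Sigma_n$-spaces $F(\mathbb{R}^2,n)$ together with the basepoint-insertion maps $\phi^\ast$. Hence the coend $F(\mathbb{R}^2,-)(X)$ is precisely their premonad construction, which for connected $X$ is weakly homotopy equivalent to $\Omega^2\Sigma^2 X$. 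Combining (i) and (ii) gives $\mathcal{F}(X)\simeq \Omega^2\Sigma^2 X$, recovering Milgram's model \cite{Milgram}.

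The main obstacle is step (i): making precise and verifying the homotopy invariance of the coend under levelwise equivariant equivalences of preoperads. The delicate points are checking that $\iota$ is compatible with all the structure maps $\phi^\ast$, so that it is genuinely a map of preoperads and passes to the coend; that the filtration quotients are correctly identified as equivariant half-smash products; and that freeness of the $\Sigma_n$-actions makes these half-smashes homotopy invariant under the equivariant equivalence. By contrast, the connectivity hypothesis on $X$ is needed only in step (ii), where it enters May's group-completion argument underlying the approximation theorem.
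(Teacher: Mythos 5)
Your overall strategy coincides with the paper's: exhibit a levelwise $\Sigma_n$-equivariant equivalence of topological preoperads between $\mathcal{F}$ and the configuration preoperad $F(\mathbb{R}^2,-)$, pass to coends, and quote the approximation theorem for $F(\mathbb{R}^2,X)\simeq\Omega^2\Sigma^2X$. Your step (i) is essentially a by-hand reproof of Lemma $2.7$ of \cite{CMT}, which the paper simply cites, so that part is sound (if redundant). The problem is the step you defer: the claim that ``the retraction can be taken compatible with the preoperad structure maps $\phi^\ast$.'' Theorem \ref{config} does not give you this --- it only produces, for each fixed $n$ separately, a homeomorphism of $\mathcal{F}(n)$ onto a $\Sigma_n$-equivariant deformation retract of $F(\mathbb{R}^2,n)$. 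It says nothing about how these identifications interact with the basepoint-insertion maps, and since $\mathcal{F}(n)$ is defined abstractly as the realization of the poset $\mathcal{L}(n)$ rather than as a subspace of $F(\mathbb{R}^2,n)$, you do not yet even have a candidate preoperad map $\iota$ until this compatibility is established. You correctly flag this as ``the main obstacle,'' but flagging it is not filling it, and it is precisely where all of the content of the paper's proof lives.

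What the paper actually does is construct, for each $a\in\mathcal{L}(l)$, the Fox--Neuwirth-type stratum
\[
F^a_l=\bigl\{(t_1,\ldots,t_l)\in F(\mathbb{R}^2,l)\ \bigm|\ t_{\tilde a(1)}\prec_{\lambda_{1,2}}\cdots\prec_{\lambda_{l-1,l}}t_{\tilde a(l)}\bigr\},
\]
with $\lambda_{i,j}$ determined by whether $\pi_a(i)=\pi_a(j)$, shows via \cite{BZ} that these strata give an equivariant decomposition of $F(\mathbb{R}^2,l)$ whose face poset is exactly $\mathcal{L}(l)$, and then verifies by direct computation that $\phi^\ast(F^a_l)=F^{\phi^\ast a}_k$, where $\phi^\ast a$ is the element defined by the combinatorial diagram (\ref{pullbackJ}). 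This last identity is the nontrivial check: it confirms that the combinatorial structure maps on $\mathcal{L}$ were defined correctly to mirror the coordinate-deletion maps on configuration spaces. To complete your argument you must carry out this verification (or an equivalent one); without it the assertion that $\iota$ is a map of preoperads is unsupported, and the coend comparison in your step (i) has nothing to apply to.
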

\begin{proof}
As in the proof of Proposition $1.7$ in \cite{Berger}, we want to compare the poset determined by the cell structure of $F(\mathbb{R}^2, l)$ with $\mathcal{L}(l)$.
We start with an anti-lexicographical ordering on $\mathbb{R}^2$. Explicitly, for any two points $x$, $y\in \mathbb{R}^2$, we may define $x=(x_1, x_2)\prec y=(y_1, y_2)$ if and only if either $x_2<y_2$ or $x_2=y_2$ but $x_1<y_1$, and write $x\prec_2 y$ for the first case and $x\prec_1 y$ for the second. Then for any $a\in \mathcal{L}(l)$
\begin{equation*}
  \xymatrix{
 \textbf{l} \ar[r]^{\tilde{a}} \ar@{->>}[d]^{\pi_a} &\textbf{l}\\
\mathbf{\ubar{t}},
  }
\end{equation*}
we can define an associated contractible subspace of $F(\mathbb{R}^2, l)$ by
\begin{equation*}
F^a_l=\{ (t_1, t_2,\ldots t_l)\in F(\mathbb{R}^2, l)~|~t_{\tilde{a}^{}(1)}\mathop{\prec}\limits_{\lambda_{1,2}}t_{\tilde{a}^{}(2)}\mathop{\prec}\limits_{\lambda_{2,3}}\cdots \mathop{\prec}\limits_{\lambda_{l-1,l}}t_{\tilde{a}^{}(l)}  \},
\end{equation*}
where $\lambda_{i,j}\in \{1, 2\}$ and $\lambda_{i,j}=1$ if $\pi_a(i)=\pi_a(j)$, $\lambda_{i,j}=2$ if $\pi_a(i)\neq\pi_a(j)$. 
By the proof of Theorem $3.13$ of \cite{BZ} (Theorem \ref{config}), $\{F^a_l~|~a\in \mathcal{L}(l)\}$ gives an equivariant cellular decomposition for $F(\mathbb{R}^2, l)$, and the reverse of the poset determined by this stratification is precisely $\mathcal{L}(l)$, whose geometric realization (in the sense of \cite{BZ}) is the deformation retract of $F(\mathbb{R}^2, l)$. Hence we get a homotopy equivalence $\mathcal{F}(l)\rightarrow F(\mathbb{R}^2, l)$ for each $l$.

Furthermore, this equivalence is indeed compatible with the preoperad structures. Indeed, by the preoperadic structure of $F(\mathbb{R}^2,-)$, we have 
\begin{equation*}
\phi^\ast(t_1,t_2,\ldots, t_l) =(t_{\phi(1)},t_{\phi(2)},\ldots, t_{\phi(k)}),
\end{equation*}
for any $\phi\in \mathbf{\Lambda}(\mathbf{k},\mathbf{l})$. Then by the diagram (\ref{pullbackJ}) which defines $\phi^\ast a$, we see that $F^{\phi^\ast a}_l$ is the cell exactly corresponding to 
\begin{eqnarray*}
\phi^\ast (F^a_l)=\{(t_{\phi(1)},t_{\phi(2)},\ldots, t_{\phi(k)})\in F(\mathbb{R}^2, k)~|~&t_{\tilde{a}^{}(\phi(1))}&\mathop{\prec}\limits_{\lambda_{\phi(1),\phi(2)}}t_{\tilde{a}^{}(\phi(2))}\mathop{\prec}\limits_{\lambda_{\phi(2),\phi(3)}}\\
&\cdots& \mathop{\prec}\limits_{\lambda_{\phi(k-1),\phi(k)}}t_{\tilde{a}^{}(\phi(k))}\}.
\end{eqnarray*}
Hence, after geometric realization we have a homotopy equivalence of topological preoperads between $\mathcal{F}$ and $F(\mathbb{R}^2, -)$.
Then by Lemma $2.7$ of \cite{CMT}, $\mathcal{F}(X)\simeq F(\mathbb{R}^2, X)$, where the latter is the classical May-Segal model for $\Omega^2\Sigma^2X$ \cite{May}.
\end{proof}

\subsection{Combinatorial structure of $\mathcal{L}(S)$}
In this subsection, we continue to study the preoperad $\mathcal{L}$; the reader may wish to skip this subsection at first reading. It is easy to see that each increasing map $\psi: \mathbf{k}\rightarrow \mathbf{l}$ is generated by elementary so-called \textit{degeneracy operators} $D^i=\sideset{_k}{^i}{\mathop{D}}: \mathbf{k}\rightarrow \mathbf{k+1}$ $(0\leq i\leq k)$ sending $j$ to $j$ for $j\leq i$ and $j$ to $j+1$ for $j\geq i+1$. Hence, any morphism $\phi\in\mathbf{\Lambda}(\mathbf{k}, \mathbf{l})$ can be uniquely written as the composition of a permutation and some degeneracies, and then the category $\mathbf{\Lambda}$ is determined by the collection of symmetric groups $\Sigma_k$'s and degeneracies $_kD^i$ for all $0\leq i\leq k$ subject to some relations. Explicitly, these relations are 
 \begin{equation*}
 D^jD^i= D^{i+1}D^j \ \ \ \ {\rm if} \  \  j\leq i, \ \ 
 \end{equation*}
 \begin{equation*}
 \sigma \circ D^i =D^{\sigma(i+1)-1}\circ d_i\sigma,
 \end{equation*}
 where $\sigma \in \Sigma_{k+1}$ and $d_i\sigma=(\sigma\circ D^i)^\sharp$. Accordingly, we can describe preoperadic structure in terms of the images of these morphisms and the induced relations. For our $\mathcal{L}$, it is easy to check that for any $a\in \mathcal{L}(k+1)$
 \begin{equation*}
  \xymatrix{
   \textbf{k+1} \ar[r]^{\tilde{a}} \ar@{->>}[d]^{\pi_a} &\textbf{k+1}\\
   \mathbf{\ubar{t}},
   }
\end{equation*}
and $\sigma\in \Sigma_{k+1}$, we have $\sigma^\ast(a)=(\sigma^{-1}\circ \tilde{a}, \pi_a)$ (hence we may also write $\sigma^\ast(a)=\sigma^{-1}\circ a$), and the commutative diagram 
\begin{equation}\label{defDi} 
   \xymatrix{
   \textbf{k+1} \ar[rr]^{\tilde{a}^{-1}} &&
   \textbf{k+1} \ar@{->>}[rr]^{\pi_a} && 
   \mathbf{\ubar{t}}\\
   \textbf{k} \ar[rr]_{d_i(\tilde{a}^{-1})} \ar[u]_{D^i} &&
   \textbf{k} \ar@{->>}[rr]_{\pi_{a}^i} \ar[u]_{D^{\tilde{a}^{-1}(i+1)-1}}&&
   \mathbf{\ubar{t^\prime}} \ar[u]_{\pi^{{\rm inc}}}
   }
\end{equation}
 shows that $D^{i\ast}(a)=(d_i(\tilde{a}^{-1})^{-1}, \pi_a^i)$.
 
 Now we turn to study the combinatorial coend $\mathcal{L}(S)=\coprod_{n\geq 1}\mathcal{L}(n)\times S^{\times n}/\sim$ for any based set $S$ where the equivalence relation is similarly defined as that of $\mathcal{F}(X)$ in Theorem \ref{Milgram}. We start with constructing some special morphisms $e_{i, j}^{\epsilon}: \mathcal{L}(k)\rightarrow \mathcal{L}(k+1)$, $0\leq i, j\leq k$ and $\epsilon\in \{-1, 0, 1\}$ by the following commutative diagram:
 \begin{equation*}
   \xymatrix{
   \textbf{k+1} \ar[rr]^{e_{i, j}^{}(\tilde{b}^{-1})} &&
   \textbf{k+1} \ar@{->>}[rr]^{\pi_{e_{,j}^{\epsilon}(b)}} && 
   \mathbf{\underline{s^\prime}}\\
   \textbf{k} \ar[rr]_{\tilde{b}^{-1}} \ar[u]_{D^i} &&
   \textbf{k} \ar@{->>}[rr]_{\pi_b} \ar[u]_{D^j}&&
   \mathbf{\ubar{s}} \ar[u]_{\pi^{{\rm inc}}},
   }
\end{equation*}
i.e., $e_{i, j}^{\epsilon}(b)=((e_{i, j}^{}(\tilde{b}^{-1}))^{-1}, \pi_{e_{,j}^{\epsilon}(b)})$. 
Explicitly, $e_{i, j}^{}(\tilde{b}^{-1})$ maps $i+1$ to $j+1$, and there are several cases for defining $\pi_{e_{,j}^{\epsilon}(b)}$.
\begin{enumerate}
   \item $0<j<k$
       \begin{enumerate}
           \item  $\epsilon=0$
                   \begin{enumerate} 
                       \item If $\pi_b(j)=\pi_b(j+1)$, then $\pi_{e_{,j}^{\epsilon}(b)}(j)=\pi_{e_{,j}^{\epsilon}(b)}(j+1)=\pi_{e_{,j}^{\epsilon}(b)}(j+2)$, $s^\prime=s$ and $\pi^{{\rm inc}}$ is identity.
                       \item If $\pi_b(j)=\pi_b(j+1)-1$, then $\pi_{e_{,j}^{\epsilon}(b)}(j)+1=\pi_{e_{,j}^{\epsilon}(b)}(j+1)=\pi_{e_{,j}^{\epsilon}(b)}(j+2)-1$, $s^\prime=s+1$.
                  \end{enumerate}               
         \item $\epsilon=1$
                  \begin{enumerate}              
                      \item If $\pi_b(j)=\pi_b(j+1)$, defined as in $(1.a.i)$.
                      \item If $\pi_b(j)=\pi_b(j+1)-1$, then $\pi_{e_{,j}^{\epsilon}(b)}(j)=\pi_{e_{,j}^{\epsilon}(b)}(j+1)=\pi_{e_{,j}^{\epsilon}(b)}(j+2)-1$, $s^\prime=s$.
                 \end{enumerate}
         \item  $\epsilon=-1$
              \begin{enumerate} 
                  \item If $\pi_b(j)=\pi_b(j+1)$, defined as in $(1.a.i)$.
               \item If $\pi_b(j)=\pi_b(j+1)-1$, then $\pi_{e_{,j}^{\epsilon}(b)}(j+1)=\pi_{e_{,j}^{\epsilon}(b)}(j+2)=\pi_{e_{,j}^{\epsilon}(b)}(j)+1$, $s^\prime=s$.
            \end{enumerate}
     \end{enumerate}
  \item $j=k$, there are only two possible values for $\epsilon$.
       \begin{enumerate}
            \item $\epsilon=0$, then $\pi_{e_{,j}^{\epsilon}(b)}(k)+1=\pi_{e_{,j}^{\epsilon}(b)}(k+1)$, $s^\prime=s+1$.
             \item $\epsilon=1$, then $\pi_{e_{,j}^{\epsilon}(b)}(k)=\pi_{e_{,j}^{\epsilon}(b)}(k+1)$, $s^\prime=s$.
      \end{enumerate}
  \item $j=0$, there are only two possible values for $\epsilon$.
       \begin{enumerate}
            \item $\epsilon=0$, then $\pi_{e_{,j}^{\epsilon}(b)}(1)+1=\pi_{e_{,j}^{\epsilon}(b)}(2)=2$, $s^\prime=s+1$.
            \item $\epsilon=-1$, then $\pi_{e_{,j}^{\epsilon}(b)}(1)=\pi_{e_{,j}^{\epsilon}(b)}(2)=1$, $s^\prime=s$.
       \end{enumerate}
\end{enumerate}
These morphisms can be interpreted by Figure \ref{combinmap} where the numbers refer to the locations. The numbers $j$ and $j+1$ lie in the same box if and only if they have same image under the morphism $\pi_b$. The left image presents the case $(1.a.i)$ while the right one presents the case $(1.a.ii)$. We should notice that the morphisms $e_{i,j}^{0}$, $e_{i,k}^{1}$ and $e_{i,0}^{-1}$ preserve the partial order while the others do not. For the simplicity of notation, we may further define $e_{i,k}^{-1}=e_{i,k}^{0}$ and $e_{i,0}^{1}=e_{i,0}^{0}$. Also, $D^{i\ast}\circ e_{i,j}^{\epsilon}={\rm id}$ and therefore $e_{i,j}^{\epsilon}$ is injective. 
\begin{figure}[H]
\centering
\includegraphics[width=4.2in]{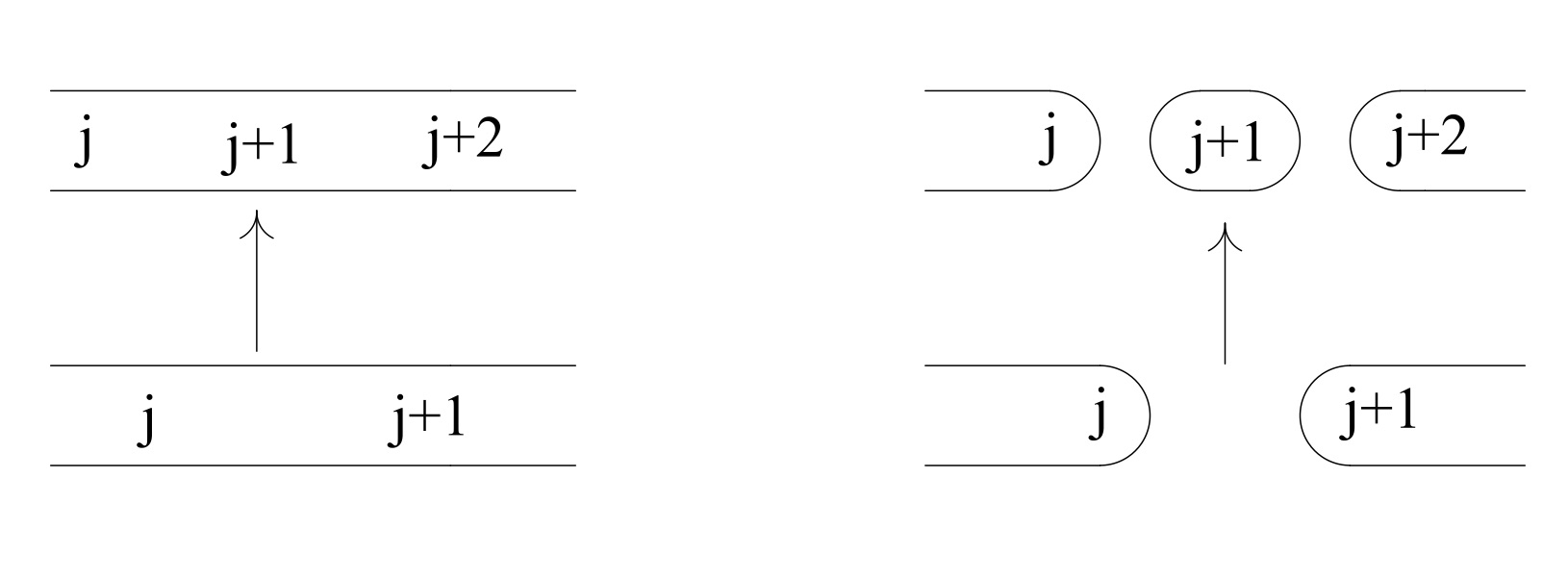}
\caption{The morphism $e_{i, j}^{0}$ with $0<j<k$}\label{combinmap}
\end{figure}
On the other hand, we also have the morphism $D_\ast^i: S^{\times k}\rightarrow S^{\times (k+1)}$ defined by 
\begin{equation*}
D_\ast^i(x_1,x_2,\ldots,x_k)=(x_1,x_2,\ldots, x_i,\ast, x_{i+1},\ldots,x_k).
\end{equation*}
Then there exist morphisms $ e_{i, j}^{\epsilon}\times D_\ast^i: \mathcal{L}(k)\times S^{\times k} \rightarrow \mathcal{L}(k+1)\times S^{\times (k+1)}$, and we want to show that each of these morphisms induces a morphism between the sets of the orbits $\mathcal{L}(k)\times_{\Sigma_k}S^{\times k}$ and $\mathcal{L}(k+1)\times_{\Sigma_{k+1}}S^{\times (k+1)}$.

\begin{lemma}\label{orbits}
There exists a map $f_{i, j, \epsilon}$ such that the diagram 
\begin{equation*}
    \xymatrix{
     \mathcal{L}(k)\times S^{\times k} \ar[rr]^{e_{i, j}^{\epsilon}\times D_\ast^i} \ar@{->>}[d]^{\pi}&&
      \mathcal{L}(k+1)\times S^{\times (k+1)}\ar@{->>}[d]^\pi\\
      \mathcal{L}(k)\times_{\Sigma_k}S^{\times k} \ar@{.>}[rr]^{f_{i, j, \epsilon}} &&
      \mathcal{L}(k+1)\times_{\Sigma_{k+1}}S^{\times (k+1)}
   }
\end{equation*}
commutes.
\end{lemma}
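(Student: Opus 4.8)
The plan is to invoke the universal property of the orbit projection $\pi$. A map $f_{i,j,\epsilon}$ completing the square exists precisely when the composite $\pi\circ(e_{i,j}^{\epsilon}\times D_\ast^i)$ is constant on the $\Sigma_k$-orbits of $\mathcal{L}(k)\times S^{\times k}$. Since $\sigma\mapsto\sigma^\ast$ is an anti-homomorphism and $\sigma\mapsto\sigma_\ast$ is a homomorphism, the balanced product $\mathcal{L}(k)\times_{\Sigma_k}S^{\times k}$ is the quotient by the relation $(\sigma^\ast a,\mathbf{x})\sim(a,\sigma_\ast\mathbf{x})$ for $\sigma\in\Sigma_k$, i.e.\ the defining relation of the coend restricted to permutations. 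Hence it suffices to show, for every $\sigma\in\Sigma_k$, $a\in\mathcal{L}(k)$ and $\mathbf{x}\in S^{\times k}$, that
\[\bigl(e_{i,j}^{\epsilon}(\sigma^\ast a),\,D_\ast^i\mathbf{x}\bigr)\ \sim\ \bigl(e_{i,j}^{\epsilon}(a),\,D_\ast^i\sigma_\ast\mathbf{x}\bigr)\]
in $\mathcal{L}(k+1)\times_{\Sigma_{k+1}}S^{\times(k+1)}$. By the coend relation at level $k+1$, applied to a permutation $\tau\in\Sigma_{k+1}$, this reduces to producing a single $\tau$ (depending on $\sigma,i,j,\epsilon$) with $e_{i,j}^{\epsilon}(\sigma^\ast a)=\tau^\ast\bigl(e_{i,j}^{\epsilon}(a)\bigr)$ and $D_\ast^i\sigma_\ast\mathbf{x}=\tau_\ast\bigl(D_\ast^i\mathbf{x}\bigr)$.

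First I would pin down $\tau$ from the second (point) condition. Since $\phi\mapsto\phi_\ast$ is functorial and $\phi_\ast$ depends only on the underlying set map $\phi$, the identity $\tau_\ast D_\ast^i=D_\ast^i\sigma_\ast$ is equivalent to the equality of set maps $\tau\circ D^i=D^i\circ\sigma\colon\mathbf{k}\to\mathbf{k+1}$. Because $\mathrm{Im}(D^i)=\mathbf{k+1}\setminus\{i+1\}$, this relation determines $\tau$ on $\mathbf{k+1}\setminus\{i+1\}$ as $D^i\circ\sigma\circ(D^i)^{-1}$, whose image is again $\mathbf{k+1}\setminus\{i+1\}$; hence $\tau$ is forced to fix $i+1$ and is a uniquely determined permutation. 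This is the permutation--degeneracy compatibility in $\mathbf{\Lambda}$ read as $D^i\circ\sigma=\tau\circ D^i$, and notably $\tau$ does not depend on $j$ or $\epsilon$.

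Next I would verify the first condition with this $\tau$, splitting an element of $\mathcal{L}(k+1)$ into its sequence part $\widetilde{(\cdot)}$ and its bar-arrangement part $\pi_{(\cdot)}$. For the sequence part, unwinding $\sigma^\ast a=(\sigma^{-1}\tilde a,\pi_a)$ and $\tau^\ast c=(\tau^{-1}\tilde c,\pi_c)$ reduces the claim to the permutation identity $e_{i,j}(\tilde a^{-1}\sigma)=e_{i,j}(\tilde a^{-1})\circ\tau$. This follows from the characterization of $e_{i,j}(-)$ as the unique bijection satisfying $e_{i,j}(\psi)\circ D^i=D^j\circ\psi$ and $i+1\mapsto j+1$: precomposing $e_{i,j}(\tilde a^{-1})\circ\tau$ with $D^i$ and using $\tau\circ D^i=D^i\circ\sigma$ together with $\tau(i+1)=i+1$ shows it satisfies exactly those defining properties for $\psi=\tilde a^{-1}\sigma$, so uniqueness yields the identity. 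For the bar-arrangement part, the key observation is that $\sigma^\ast$ leaves the $\pi$-component fixed ($\pi_{\sigma^\ast a}=\pi_a$), while $\pi_{e_{i,j}^{\epsilon}(b)}$ is built from $\pi_b$, the fixed degeneracy $D^j$ and the local rules at the index $j$ alone, never seeing $\tilde b$. Consequently $\pi_{e_{i,j}^{\epsilon}(\sigma^\ast a)}=\pi_{e_{i,j}^{\epsilon}(a)}$, and since $\tau^\ast$ preserves $\pi$-components, both parts match.

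The genuinely delicate point is the interplay between the two conditions: $\tau$ is forced by the point condition, so one must check that this same $\tau$ also resolves the sequence/poset condition for every choice of $j$ and $\epsilon$. The computation above shows why this works, namely that $\sigma$ only disturbs the sequence component $\tilde a$, which is corrected by $\tau$ through the clean relation $\tau\circ D^i=D^i\circ\sigma$, whereas the bar-arrangement component is completely inert under $\sigma$. Once both conditions hold, the displayed equivalence follows, $\pi\circ(e_{i,j}^{\epsilon}\times D_\ast^i)$ descends to the orbit sets, and $f_{i,j,\epsilon}$ is the induced map.
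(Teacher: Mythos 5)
Your argument is correct and follows essentially the same route as the paper: your $\tau$ is exactly the paper's $\tilde{\sigma}^i$ defined by $\tau\circ D^i=D^i\circ\sigma$, and your verification that $e_{i,j}^{\epsilon}(\sigma^\ast a)=\tau^\ast\bigl(e_{i,j}^{\epsilon}(a)\bigr)$ (via the uniqueness characterization of $e_{i,j}$, with the bar-arrangement component inert under $\sigma$) is the same computation the paper carries out with its commutative diagram. No gaps.
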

\begin{proof}
By construction, we may view $\Sigma_k$ acting diagonally on $\mathcal{L}(k)\times S^{\times k}$ by
\begin{equation}\label{krelation}
\sigma\cdot (b,(x_1,x_2,\ldots, x_k))=(\sigma^{-1}\circ b, (x_{\sigma(1)}, x_{\sigma(2)},\ldots, x_{\sigma(k)})),
\end{equation}
for any $\sigma\in \Sigma_k$, $b\in \mathcal{L}(k)$ and $x_i\in S$.
Then
\begin{eqnarray*}
&(e_{i, j}^{\epsilon}\times D_\ast^i )&(\sigma^{-1}\circ b, (x_{\sigma(1)}, x_{\sigma(2)},\ldots, x_{\sigma(k)}))=\\
 &&\ \ \  (e_{i, j}^{\epsilon}(\sigma^{-1}\circ b),(x_{\sigma(1)}, x_{\sigma(2)},\ldots,x_{\sigma(i)},\ast,x_{\sigma(i+1)}, \ldots, x_{\sigma_k})).
\end{eqnarray*}
We then define a permutation $\tilde{\sigma}^i\in \Sigma_{k+1}$ such that the diagram 
\begin{equation}\label{sigmai}
   \xymatrix{
   \mathbf{k} \ar[rr]^{D^i} \ar[d]^{\sigma} &&
   \mathbf{k+1}\ar[d]^{\tilde{\sigma}^i} \\
   \mathbf{k} \ar[rr]^{D^i} &&
   \mathbf{k+1}
   }
\end{equation}
commutes. Hence we have 
\begin{eqnarray*}
\tilde{\sigma}^i D^i_\ast(x_1,x_2\ldots, x_k)
&=& \tilde{\sigma}^i (x_1,x_2,\ldots, x_i,\ast, x_{i+1},\ldots,x_k)\\
&=& (x_{\sigma(1)}, x_{\sigma(2)},\ldots,x_{\sigma(i)},\ast,x_{\sigma(i+1)}, \ldots, x_{\sigma_k}),
\end{eqnarray*}
and the commutative diagram 
\begin{equation*}
    \xymatrix{ 
    \mathbf{k+1}\ar[rr]_{\tilde{\sigma}^i } \ar@/^1pc/[rrrr]^{e_{i, j}^{}(\tilde{b}^{-1}\circ \sigma)} &&
    \mathbf{k+1}\ar[rr]_{e_{i, j}^{}(\tilde{b}^{-1})} &&
    \mathbf{k+1}\ar@{->>}[rr]^{\pi_{{e_{,j}^{\epsilon}}}(b)=\pi_{{e_{,j}^{\epsilon}}}(\sigma^{-1}\circ b)}&&
    \mathbf{\underline{s^\prime}} \\
    \mathbf{k}\ar[rr]_{\sigma} \ar[u]^{D^i}&&
    \mathbf{k}\ar[rr]_{\tilde{b}^{-1}}  \ar[u]^{D^i}&&
    \mathbf{k}\ar@{->>}[rr]_{\pi_b=\pi_{\sigma^{-1}\circ b}}  \ar[u]^{D^j}&&
    \mathbf{\underline{s}} \ar[u]^{\pi^{{\rm inc}}},
    }
\end{equation*}
which implies 
\begin{eqnarray*}
e_{i, j}^{\epsilon}(\sigma^{-1}\circ b)
&=& (( e_{i, j}^{}(\tilde{b}^{-1}\circ \sigma))^{-1}, \pi_{e_{, j}^{\epsilon}}(b))\\
&=&((\tilde{\sigma}^i)^{-1}\circ (e_{i, j}^{}(\tilde{b}^{-1}))^{-1},\pi_{e_{, j}^{\epsilon}}(b))\\
&=&(\tilde{\sigma}^i)^{-1}\circ e_{i, j}^{\epsilon}(b).
\end{eqnarray*}
By combining the above together, we have 
\begin{eqnarray*}
(e_{i, j}^{\epsilon}\times D_\ast^i )(\sigma\cdot (b,(x_1,x_2,\ldots, x_k)))
 &=&(e_{i, j}^{\epsilon}\times D_\ast^i )(\sigma^{-1}\circ b, (x_{\sigma(1)}, x_{\sigma(2)},\ldots, x_{\sigma(k)}))\\
 &=& ((\tilde{\sigma}^i)^{-1}\circ e_{i, j}^{\epsilon}(b), \tilde{\sigma}^i D^i_\ast(x_1,x_2\ldots, x_k) )\\
 &=& \tilde{\sigma}^i\cdot \big((e_{i, j}^{\epsilon}\times D_\ast^i )(b, (x_1,x_2\ldots, x_k) )\big),
 \end{eqnarray*}
 which shows that $f_{i, j,  \epsilon}$ can be well-defined.
\end{proof}
We now want to study the structure of $\mathcal{L}(S)$ by using the morphisms $f_{i,j,\epsilon}$ of Lemma \ref{orbits}.
\begin{proposition}\label{cohenJ}
Let $\mathcal{L}_{k+1}(S)=\coprod_{1\leq n\leq k+1}\mathcal{L}(n)\times S^{\times n}/\sim$ be the $(k+1)$-st `skeleton' of $\mathcal{L}(S)$, then the coequalizer of the well-defined morphisms $f_{i, j ,\epsilon}$ for all $0\leq i \leq k$ and all possible $\epsilon \in \{0,\pm 1\}$ is isomorphic to $\mathcal{L}_{k+1}(S)$. Moreover, the sequence 
\[
\begin{tikzcd}
0 \ar[r]
&
\mathcal{L}(k)\times_{\Sigma_k}S^{\times k} \ar[r, shift left=1.52ex,"f_{0,0,\epsilon}"]
    \arrow{r}[description]{\vdots}
    \ar[r, shift right=1.52ex, swap, "f_{k, k, \epsilon}"]
&
\mathcal{L}(k+1)\times_{\Sigma_{k+1}}S^{\times (k+1)} \ar[r,"q"] 
&
\mathcal{L}_{k+1}(S) \ar[r]
&
0
\end{tikzcd}
\]
is exact.
\end{proposition}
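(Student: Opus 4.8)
The plan is to peel the preoperad relation into its permutation part and its degeneracy part, and then to identify the degeneracy part between levels $k$ and $k+1$ with the maps $f_{i,j,\epsilon}$. Recall that every morphism of $\mathbf{\Lambda}$ is uniquely a permutation followed by a composite of the degeneracies $D^i$; hence the relation $\sim$ defining $\mathcal{L}(S)$ is generated by the permutation relation \eqref{krelation} together with the degeneracy relation $(D^{i\ast}a,\mathbf{x})\sim(a,D^i_\ast\mathbf{x})$ for $a\in\mathcal{L}(n+1)$. Quotienting by the former on each level produces the orbit sets $Y_n:=\mathcal{L}(n)\times_{\Sigma_n}S^{\times n}$, so that $\mathcal{L}_{k+1}(S)$ is the quotient of $\coprod_{1\le n\le k+1}Y_n$ by the residual degeneracy relation. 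The first step I would record as a lemma is that, for each fixed $i$, the maps $\{e^\epsilon_{i,j}\}_{j,\epsilon}$ form a \emph{complete} set of sections of $D^{i\ast}\colon\mathcal{L}(k+1)\to\mathcal{L}(k)$: they enumerate the fibre $(D^{i\ast})^{-1}(b)$ by inserting the new basepoint-carrying element into the ordered partition $b$ at every admissible position $j+1$ and every block-type $\epsilon\in\{0,\pm1\}$ of the definition (cf.\ Figure \ref{combinmap}). This is the combinatorial input tying $f_{i,j,\epsilon}$ to the degeneracy relation.

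Next I would establish exactness at $\mathcal{L}_{k+1}(S)$, namely that $q$ is surjective, together with the fact that $q$ coequalizes the family. Surjectivity is a raising argument: since each $D^{i\ast}$ admits the sections $e^\epsilon_{i,j}$, any class at level $n\le k$ equals a class one level higher via $[(b,\mathbf{x})]=[(e^\epsilon_{i,j}(b),D^i_\ast\mathbf{x})]$, and iterating pushes every element of $\mathcal{L}_{k+1}(S)$ into the image of $Y_{k+1}$. That $q$ coequalizes follows from $D^{i\ast}\circ e^\epsilon_{i,j}=\mathrm{id}$ and the degeneracy relation, since $q\big(f_{i,j,\epsilon}[(b,\mathbf{x})]\big)=[(e^\epsilon_{i,j}(b),D^i_\ast\mathbf{x})]=[(b,\mathbf{x})]$, independently of $(i,j,\epsilon)$. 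Hence $q$ factors through the coequalizer $C:=Y_{k+1}/\!\sim_f$, where $\sim_f$ is the equivalence relation generated by $f_{i,j,\epsilon}(z)\sim f_{i',j',\epsilon'}(z)$ for $z\in Y_k$, yielding a surjection $\bar q\colon C\to\mathcal{L}_{k+1}(S)$ to be shown bijective.

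For exactness at $Y_{k+1}$ (that the relation cut out by $q$ is exactly $\sim_f$, so $\bar q$ is injective) I would use a normal-form description: every class in $\mathcal{L}_{k+1}(S)$ has a unique representative $[(c,\mathbf{z})]$ with all $S$-coordinates distinct from the basepoint, obtained by deleting every basepoint coordinate, the deletion being order-independent by the relation $D^jD^i=D^{i+1}D^j$ ($j\le i$). This identifies the $q$-fibre over such a non-degenerate class at level $m$ with the set of ways to insert $k+1-m$ basepoints into the core $(c,\mathbf{z})$. When $m=k+1$ the fibre is a point; when $m=k$ the fibre consists of the elements $f_{i,j,\epsilon}(z)$ attached to the single common core $z=[(c,\mathbf{z})]$, which form one $\sim_f$-class by definition. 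Injectivity of $\bar q$ thus reduces to the claim that \emph{any two ways of inserting the same basepoints into a common core are connected by $\sim_f$}.

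The main obstacle is exactly this claim for $m<k$, where the core itself carries at least one basepoint and the two elements to be compared agree only after further reduction in $\mathcal{L}_k(S)$. A single generating move of $\sim_f$ repositions one basepoint while fixing a level-$k$ core $z$; to compare elements whose level-$k$ cores $z,z'$ merely satisfy $[z]=[z']$ in $\mathcal{L}_k(S)$ I would argue by induction on $k$, invoking the proposition one level down to get $z\sim_{f^{(k-1)}}z'$, and then lift each level-$(k-1)$ move on the core to a $\sim_f$-move at level $k+1$. This lift is where the coherence of the degeneracy structure is needed: it requires the commutation identities $D^jD^i=D^{i+1}D^j$ and $\sigma\circ D^i=D^{\sigma(i+1)-1}\circ d_i\sigma$ to translate into relations among the $e^\epsilon_{i,j}$ that let two successive insertions be transposed. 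Verifying these transposition relations across all position/block-type combinations (the enumerated cases $(1)$–$(3)$ of the definition of $e^\epsilon_{i,j}$) is the routine-but-delicate bookkeeping I expect to dominate the proof; granting it, $\bar q$ is a bijection and the displayed sequence is exact.
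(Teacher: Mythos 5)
Your overall strategy coincides with the paper's: split the defining relation of $\mathcal{L}(S)$ into its permutation and degeneracy parts, observe that for each fixed $i$ the maps $e^{\epsilon}_{i,j}$ exhaust the fibres of $D^{i\ast}$ (so that every instance of the relation $(D^{i\ast}a,\mathbf{x})\sim(a,D^i_\ast\mathbf{x})$ at the top level is realized by some $f_{i,j,\epsilon}$), obtain surjectivity of $q$ by raising, and conclude that $q$ factors through the coequalizer. You are in fact more explicit than the paper about the one delicate point --- that a zigzag of relations descending below level $k$ can be replaced by a chain of $\sim_f$-moves --- which the paper disposes of with ``similar arguments''; your normal-form and confluence scheme is the right way to organize this, although the transposition relations among the $e^{\epsilon}_{i,j}$ on which it rests are exactly the content you defer, so that portion remains a plan rather than a proof.

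The genuine gap is that you never address exactness at the left-hand term, i.e.\ the injectivity of the maps $f_{i,j,\epsilon}$, which is what the ``$0\rightarrow$'' in the displayed sequence asserts and is the one step the paper writes out in full. This is not automatic from $D^{i\ast}\circ e^{\epsilon}_{i,j}=\mathrm{id}$, because $f_{i,j,\epsilon}$ is defined on orbit sets: one must show that if some $\tau\in\Sigma_{k+1}$ carries $(e^{\epsilon}_{i,j}(a),D^i_\ast\mathbf{x})$ to $(e^{\epsilon}_{i,j}(b),D^i_\ast\mathbf{y})$, then already $[a,\mathbf{x}]=[b,\mathbf{y}]$ in $\mathcal{L}(k)\times_{\Sigma_k}S^{\times k}$. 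The paper does this by checking that such a $\tau$ must be of the form $\tilde{\sigma}^i$ as in diagram (\ref{sigmai}), whence $\tau\cdot e^{\epsilon}_{i,j}(a)=e^{\epsilon}_{i,j}(\sigma^{\ast}a)$ and the injectivity of $e^{\epsilon}_{i,j}$ finishes the argument. You should add this step (and note the mild subtlety when some coordinates of $\mathbf{x}$ are themselves the basepoint); without it the sequence is only shown to be a coequalizer diagram, not an exact one.
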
 
 \begin{proof}
 First, we notice that $q$ is surjective by Lemma $2.2$ of \cite{CMT}. Also by definition, the relation for defining $\mathcal{L}_{k+1}(S)$ is 
 \begin{equation*}
(\phi^\ast a, (x_1, x_2, \ldots, x_k))\sim (a, \phi_\ast(x_1, x_2, \ldots, x_k)) \ {\rm for} \  \phi\in \mathbf{\Lambda}(\mathbf{k},\mathbf{l}), a\in \mathcal{L}(l).	
\end{equation*}
Specializing to the case $l=k+1$, we see the relation can be written as
\begin{equation}\label{Di}
(D^{i\ast} a, (x_1, x_2, \ldots, x_k))\sim (a, (x_1, x_2, \ldots, x_i,\ast, x_{i+1},\ldots, x_k)),
\end{equation}
for each $0\leq i\leq k$ and $a\in \mathcal{L}(k+1)$. For this $a$, there exists a unique $j$ such that $\tilde{a}(j+1)=i+1$. Hence, we see that $e_{i,j}^{\epsilon}(D^{i\ast}a)=a$ for some $\epsilon$. Then by Lemma \ref{orbits} we have a commutative diagram
\[
\begin{tikzcd}
0 \ar[r]
&
\mathcal{L}(k)\times S^{\times k} 
     \ar[dd, two heads]
    \ar[r, shift left=1.52ex,"e_{0, 0}^{\epsilon}\times D_\ast^0"]
    \arrow{r}[description]{\vdots}
    \ar[r, shift right=1.52ex, swap, "e_{k, k}^{\epsilon}\times D_\ast^k"]
&
\mathcal{L}(k+1)\times S^{\times (k+1)} \ar[r,"\tilde{q}"] 
 \ar[dd, two heads]
&
\widetilde{\mathcal{L}}_{k+1}(S) \ar[r]
 \ar[dd, two heads]
&
0 \\ &&& &
\\ 
0 \ar[r]
&
\mathcal{L}(k)\times_{\Sigma_k}S^{\times k} \ar[r, shift left=1.52ex,"f_{0, 0, \epsilon}"]
    \arrow{r}[description]{\vdots}
    \ar[r, shift right=1.52ex, swap, "f_{k,k,\epsilon}"]
&
\mathcal{L}(k+1)\times_{\Sigma_{k+1}}S^{\times (k+1)} \ar[r,"q"] 
&
\mathcal{L}_{k+1}(S) \ar[r]
&
0,
\end{tikzcd}
\]
where the first row is defined to be exact. The injectivity of $f_{i,j,\epsilon}$ can be argued as follows. Suppose we have $f_{i,j,\epsilon}[a, x]=f_{i,j,\epsilon}[b, y]$ for some $a$, $b\in \mathcal{L}(k)$ and $x$, $y\in S^{\times k}$. Then by the diagram there exists $\tau \in \Sigma_{k+1}$ such that $\tau \cdot (e_{i,j}^{\epsilon}(a), D_\ast^i(x))=(e_{i,j}^{\epsilon}(b), D_\ast^i(y))$. In particular, we have 
\[\tau (x_1,\ldots, x_i,\ast,x_{i+1},\ldots, x_k)=(y_1,\ldots, y_i,\ast, y_{i+1},\ldots, y_k),\]
which implies $\tau=\tilde{\sigma}^i$, $\sigma (x)=y$ for some $\sigma\in \Sigma_{k}$ in the sense of diagram \ref{sigmai}. Then $\tau\cdot e_{i,j}^{\epsilon}(a)=\tilde{\sigma}^i \cdot e_{i,j}^{\epsilon}(a)=e_{i,j}^{\epsilon}(\sigma^\ast (a))$ by the proof of Lemma \ref{orbits}. Then $e_{i,j}^{\epsilon}(\sigma^\ast (a))=e_{i,j}^{\epsilon}(b)$ implies $\sigma^\ast (a)=b$ and $[a, x]=[\sigma a,\sigma x]=[b, y]$. Hence, $f_{i,j,\epsilon}$ is injective. Similar arguments will show that the second row of the diagram is exact. 
 \end{proof}
 \begin{lemma}
 The collection of morphisms $\big\{f_{i,j,\epsilon} ~|~ 0\leq i, j \leq k, \epsilon \in \{\pm1, 0\}\big\}$ satisfies the following relations:
\begin{eqnarray*}
f_{i,j,\epsilon} \circ f_{i^\prime,j^\prime,\epsilon^\prime}= f_{i^\prime+1,j^\prime+1,\epsilon^\prime} f_{i,j,\epsilon}  \ \ \  {\rm if}~ i\leq i^\prime , j \leq j^\prime; \\
f_{i,j+1,\epsilon} \circ f_{i^\prime,j^\prime,\epsilon^\prime}= f_{i^\prime+1,j^\prime,\epsilon^\prime} f_{i,j,\epsilon}   \ \ \  {\rm if}~ i\leq i^\prime , j \geq j^\prime.
\end{eqnarray*}
 \end{lemma}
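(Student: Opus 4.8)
The plan is to descend each identity from the un-quotiented maps $e_{i,j}^{\epsilon}\times D_\ast^i$ to the orbit-level maps $f_{i,j,\epsilon}$, and to verify it there one factor at a time. First I would exploit Lemma \ref{orbits}: the square there gives $\pi\circ(e_{i,j}^{\epsilon}\times D_\ast^i)=f_{i,j,\epsilon}\circ\pi$ with $\pi$ surjective. Composing two such squares, each relation among the $f$'s will follow once I show the two relevant composites of maps $e_{i,j}^{\epsilon}\times D_\ast^i$ land in the same orbit; since each map splits as a product of a map on $\mathcal{L}(\cdot)$ and a map $D_\ast^{\cdot}$ on $S^{\times\cdot}$, I would treat the two factors independently.

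For the $S$-factor the assertion is the single cosimplicial identity $D_\ast^{i}\circ D_\ast^{i'}=D_\ast^{i'+1}\circ D_\ast^{i}$ for $i\leq i'$, which is the standard relation dual to the stated $D^jD^i=D^{i+1}D^j$ ($j\leq i$). A one-line comparison of the positions of the two inserted basepoints confirms it, and because both relations in the lemma assume $i\leq i'$, the $S$-factor is literally the same on both sides in either case.

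For the $\mathcal{L}$-factor I would read the two displayed relations off the defining diagram of $e_{i,j}^{\epsilon}$, which encodes the map by a degeneracy $D^i$ on the top (underlying-set) row and a degeneracy $D^j$ on the bottom (block) row. The top-row contribution obeys $D^{i}D^{i'}=D^{i'+1}D^{i}$ for $i\leq i'$ — exactly the stated relation with $(j,i)=(i,i')$ — which accounts for the shift $i'\mapsto i'+1$ on the outer operator. The bottom-row contribution governs the block index $j$ and the bar label $\epsilon$: the dichotomy $j\leq j'$ versus $j\geq j'$ records whether the block produced by the second insertion sits before or after the one produced by the first, and this is precisely what decides whether the outer block index is raised ($j'\mapsto j'+1$, first relation) or the inner one is ($j\mapsto j+1$, second relation). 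Since the two insertions disturb disjoint local stretches of the bar pattern, the labels $\epsilon,\epsilon'$ are carried along unchanged.

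The step I expect to be the main obstacle is the $\epsilon$-bookkeeping together with the boundary cases $j\in\{0,k\}$, where the permissible values of $\epsilon$ are constrained and one must invoke the conventions $e_{i,k}^{-1}=e_{i,k}^{0}$ and $e_{i,0}^{1}=e_{i,0}^{0}$. There I would check case by case, following the enumeration of $\pi_{e_{\cdot,j}^{\epsilon}(b)}$, that the two composites still yield identical bar configurations and that the shifted indices $i'+1$ and $j'+1$ (respectively $j+1$) remain admissible. Once these degenerate positions are dispatched, the $\mathcal{L}$- and $S$-factors agree on the nose, so no auxiliary permutation is required and, by the surjectivity of $\pi$ in Lemma \ref{orbits}, both relations descend to the maps $f_{i,j,\epsilon}$.
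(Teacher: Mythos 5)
Your proposal takes essentially the same route as the paper: the paper's proof is a one-line remark that the relations descend from the corresponding relations for $e_{i,j}^{\epsilon}$ and $D^i_\ast$, which follow from their definitions together with the degeneracy relation $D^jD^i=D^{i+1}D^j$ for $j\leq i$. Your write-up simply fills in more of the bookkeeping (the orbit descent via Lemma \ref{orbits}, the split into $\mathcal{L}$- and $S$-factors, and the $\epsilon$/boundary cases) that the paper leaves implicit, and it is correct.
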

 \begin{proof}
 These relations follow from the corresponding relations for $e_{i,j}^{\epsilon}$ and $D^i_\ast$ which can be easily obtained from their definitions and the relation
 \begin{equation*}
 D^jD^i= D^{i+1}D^j \ \ \ \ {\rm if} \  \  j\leq i.\ \ 
 \end{equation*}
 \end{proof}
 
 \section{Combinatorial homotopy theory: Cohen groups}\label{combisection}
\noindent In this section, we review Cohen's combinatorial homotopy theory and Wu's bi-$\Delta$-group approach with our further generalization. The material here presents a basic way to study homotopy exponent problems.
 \subsection{Cohen's original approach}\label{Cohensection}(\cite{Cohen95}) Given $f: Y \rightarrow \Omega Z$, we can construct some canonical maps in $[Y^{\times (n+1)}, \Omega Z]$. Define 
 \begin{equation*}
 Y^{\times (n+1)}\stackrel{\pi_i}{\longrightarrow} Y \stackrel{f}{\longrightarrow} \Omega Z,
 \end{equation*}
where $\pi_i(y_1, y_2,\ldots, y_{n+1})=y_i$. We may denote the homotopy class of this map by $y_i$. Then there is a natural representation 
\begin{equation*}
e_f: F_{n+1}=F_{n+1}(x_1, x_2, \ldots, x_{n+1}) \rightarrow [Y^{\times (n+1)}, \Omega Z]
\end{equation*}
defined by $e_f(x_i)=y_i$, where $F_{n+1}(x_1, x_2,\ldots, x_{n+1})$ is the free group of rank $n+1$ generated by $x_1, x_2, \ldots, x_{n+1}$. This group homomorphism may have nontrivial kernel depending on the choice of the maps and spaces involved, and there are two typical cases: 
\begin{itemize}
\item If the reduced diagonal $\bar{\Delta}: Y\rightarrow Y\wedge Y$ is null homotopic (for instance when $Y$ is a co-$H$-space), then the iterated commutator $[[y_{i_1}, y_{i_2}], \ldots, y_{i_t}]=1$ in $[Y^{\times (n+1)}, \Omega Z]$ when $y_{i_a}=y_{i_b}$ for some $a\neq b$. Then 
\[N_{n+1}=\langle[[x_{i_1}, x_{i_2}], \ldots, x_{i_t}] ~|~y_{i_a}=y_{i_b} ~{\rm for }~{\rm some}~a\neq b\rangle_N\unlhd F_{n+1}\]
 lies in the kernel of $e_f$.
\item If $p^r[f]=0$, then $y_i^{p^r}=1$ for each $i$. Then 
\[ N_{n+1}=\langle x_1^{p^r},\ldots, x_{n+1}^{p^r}\rangle_N \unlhd {\rm Ker}(e_f). \]
\end{itemize}
On the other hand, the sequence of sets $\{[Y^{\times (n+1)}, \Omega Z]\}_{n\geq 0}$ can be endowed with a $\Delta$-group structure induced from the co-$\Delta$ structure of the sequence of spaces $\{Y^{\times (n+1)}\}_{n\geq 0}$, the structural morphisms of which are defined by for any $0\leq i\leq n$
\begin{equation*}
d^i: Y^{\times n}\rightarrow  Y^{\times (n+1)}, ~ \ \ \  \ (y_1, y_2, \ldots, y_n) \mapsto (y_1, \ldots, y_i, \ast, y_{i+1}, \ldots, y_n).
\end{equation*}
Similarly, there is a $\Delta$-group structure on $\{F_{n+1}\}_{n\geq 0}$ defined by  for any $0\leq i\leq n$ 
\begin{equation*}
\tilde{d}_i: F_{n+1}\rightarrow F_n, \ \ \ \ \ 
\tilde{d}_i(x_j)=\left\{\begin{array}{ll}
x_j& j\leq i,\\
1& j=i+1,\\
x_{j-1}&j\geq i+2.
\end{array}
\right.
\end{equation*}
By direct computations, there is a commutative diagram
\begin{equation*}
 \xymatrix{
 N_{n+1} \ar@{^{(}->}[rr] \ar[d]^{\tilde{d}_{i\vert}} &&
 F_{n+1}  \ar@{->>}[rr] \ar[d]^{\tilde{d}_i} \ar@/^1pc/[rrrr]^{e_f} &&
 K_{n+1} \ar@{.>}[rr]_{e_f \ } \ar[d]^{\bar{d}_i} &&
 [Y^{\times (n+1)}, \Omega Z]  \ar[d]^{d^{i\ast}}\\
 N_{n} \ar@{^{(}->}[rr]  &&
 F_{n}  \ar@{->>}[rr]  \ar@/_1pc/[rrrr]_{e_f} &&
 K_{n} \ar@{.>}[rr]^{e_f \ }  &&
 [Y^{\times n}, \Omega Z] , 
 }
\end{equation*}
where $N_n$ is chosen depending on the condition, $K_n=F_n/N_n$. Now the sequence of groups $\{K_{n+1},\bar{d}_i\}_{n\geq 0}$ is a $\Delta$-group, and we have commutative diagram 
\[
\begin{tikzcd}
0 
  \ar[r]
&
\lbrack J_{n+1}(Y), \Omega Z \rbrack
  \ar[r, "q_{n+1}^{\ast}"]
&
\lbrack Y^{\times (n+1)}, \Omega Z \rbrack
    \ar[r, shift left=1.52ex,"d^{0\ast}"]
    \arrow{r}[description]{\vdots}
    \ar[r, shift right=1.52ex, swap, "d^{n\ast}"]
&
\lbrack Y^{\times n}, \Omega Z \rbrack
  \ar[r] 
&
0 \\ 
&&& &
\\ 
0 
\ar[r]
&
\mathfrak{h}_{n+1} 
 \ar[r]
 \ar[uu, dashrightarrow, "e_f"]
&
K_{n+1} \ar[r, shift left=1.52ex,"\bar{d}_0"]
    \arrow{r}[description]{\vdots}
    \ar[r, shift right=1.52ex, swap, "\bar{d}_n"]
    \ar[uu,"e_f"]
&
K_n 
\ar[r] 
\ar[uu,"e_f"]
&
0,
\end{tikzcd}
\]
where $q_{n+1}$ is the natural projection, $q_{n+1}^\ast$ is an injection by Corollary $1.1.4$ of \cite{Wu}, $[ J_{n+1}(Y), \Omega Z]$ is the equalizer of $\{ d^{i\ast}~|~0\leq i\leq n\}$ by Lemma $2.9$ of \cite{Wu2003}, and $\mathfrak{h}_{n+1}$ is defined to be  the equalizer of $\{ \bar{d}_{i}~|~0\leq i\leq n\}$. Then there is an induced morphism $\bar{p}_{n+1}= \bar{d}_{i\vert}: \mathfrak{h}_{n+1}\rightarrow \mathfrak{h}_{n}$ which is indeed an epimorphism for each $n$. Similarly, we have an epimorphism $p_{n+1}: [ J_{n+1}(Y), \Omega Z] \twoheadrightarrow [ J_{n}(Y), \Omega Z]$ for each $n$. Now by the above diagram we have the so-called \textit{Cohen representation} at infinity
\begin{equation*}
e_f: \mathfrak{h}=\lim_{n} \mathfrak{h}_n\rightarrow \lim_{n} [ J_{n}(Y), \Omega Z]\cong [ J(Y), \Omega Z],
\end{equation*}
where $\mathfrak{h}$ is the desired \textit{Cohen group}. The Cohen representation can be functorially faithful due to a suitable choice of category and a beautiful characterization of the group $\mathfrak{h}$ has been proved by Cohen.
\begin{theorem}[\cite{Cohen95}]
When $\mathfrak{h}_1=\mathbb{Z}$ or $\mathbb{Z}/p^r$, 
\begin{equation*}
{\rm Ker}(\bar{p}_n:\mathfrak{h}_n\twoheadrightarrow \mathfrak{h}_{n-1}) \cong {\rm Lie}(n),
\end{equation*}
where ${\rm Lie}(n)$ is the $\mathfrak{h}_1$-submodule of $\big(V=\langle x_1,x_2, \ldots x_n\rangle_{\mathfrak{h}_1}\big)^{\otimes n}$ spanned by Lie elements 
\begin{equation*}
[[x_{\sigma(1)},x_{\sigma(2)}],\ldots, x_{\sigma(n)}]
\end{equation*}
for any $\sigma \in \Sigma_n$.
\end{theorem}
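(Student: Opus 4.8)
The plan is to identify $\ker(\bar p_n)$ with a group of Moore cycles and then compute those cycles by the Magnus expansion. First I would unwind the definitions. Since $\mathfrak h_n$ is the equalizer of the faces $\bar d_0,\dots,\bar d_{n-1}\colon K_n\to K_{n-1}$ and $\bar p_n=\bar d_{0}|_{\mathfrak h_n}$, an element $w\in\mathfrak h_n$ lies in $\ker(\bar p_n)$ exactly when its common face value is trivial, i.e. $\bar d_i(w)=1$ for every $i$. Hence
\[
\ker(\bar p_n)=\bigcap_{i=0}^{n-1}\ker\big(\bar d_i\colon K_n\to K_{n-1}\big).
\]
Reading off the $\Delta$-structure on generators, $\bar d_i$ is induced by the map $\tilde d_i$ that deletes the generator $x_{i+1}$ and reindexes, so membership in $\ker(\bar p_n)$ means precisely that $w$ becomes trivial in $K_{n-1}$ after erasing any single generator.

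Next I would make this ``multilinearity'' condition precise through the Magnus embedding $\mu\colon F_n\hookrightarrow \mathfrak h_1\langle\langle X_1,\dots,X_n\rangle\rangle$, $x_j\mapsto 1+X_j$, working over $\mathbb Z$ in the co-$H$ case and over $\mathbb Z/p^r$ in the torsion case. Deleting $x_j$ corresponds to the ring map sending $X_j$ to $0$, so the cycle condition says that every monomial occurring in $\mu(w)-1$ contains each of $X_1,\dots,X_n$. The monomials of minimal degree with this property have degree exactly $n$ and are multilinear; therefore any representative of a cycle lies in the lower central series term $\Gamma_n(F_n)$, and by the Magnus--Witt theorem the degree-$n$ component of $\mu(w)-1$ is a Lie element lying in the multilinear summand $\mathrm{Lie}(n)\subseteq V^{\otimes n}$. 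This defines a leading-term map $\ell\colon \ker(\bar p_n)\to \mathrm{Lie}(n)$. Because the cross term in $\mu(ww')-1=(\mu(w)-1)+(\mu(w')-1)+(\mu(w)-1)(\mu(w')-1)$ has degree at least $2n$, the leading terms add and $\ell$ is a homomorphism into the abelian group $\mathrm{Lie}(n)$.

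It then remains to show $\ell$ is an isomorphism, and this is where the relations $N_n$ enter. For surjectivity I would note that each multilinear basic commutator, such as $[[x_{\sigma(1)},x_{\sigma(2)}],\dots,x_{\sigma(n)}]$, is a genuine element of $\ker(\bar p_n)$ (every letter occurs, so deleting any one kills it), and that the leading terms of these commutators span $\mathrm{Lie}(n)$ by the standard basis theory of the free Lie algebra. For injectivity in the co-$H$ case I would prove $\ker(\bar p_n)\cap\Gamma_{n+1}=1$ in $K_n$: here $N_n$ is the normal closure of all brackets with a repeated letter, and any basic commutator of weight $\ge n+1$ in $n$ letters repeats a letter, so $\Gamma_{n+1}(F_n)\subseteq N_n$ and $\Gamma_{n+1}(K_n)=1$. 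An element with $\ell(w)=0$ lies in $\Gamma_{n+1}$ and is therefore trivial, giving injectivity; combined with surjectivity this yields $\ker(\bar p_n)\cong\mathrm{Lie}(n)$.

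The main obstacle will be the torsion case $\mathfrak h_1=\mathbb Z/p^r$, where $N_n=\langle x_1^{p^r},\dots,x_n^{p^r}\rangle_N$ no longer kills repeated-letter brackets, so the clean bound $\Gamma_{n+1}\subseteq N_n$ fails and injectivity of $\ell$ is not automatic. My strategy there would be to run the whole argument with coefficients in $\mathbb Z/p^r$ and to exploit that each relator has Magnus expansion $x_i^{p^r}\mapsto (1+X_i)^{p^r}$, whose nontrivial terms are powers of a \emph{single} variable and hence purely non-multilinear. This should show that the relations only identify elements whose leading terms avoid $\mathrm{Lie}(n)$, so that $\ell$ remains well defined and bijective onto $\mathrm{Lie}(n)$ regarded as the free $\mathbb Z/p^r$-module; making the ``no spurious cycles survive'' step rigorous (for instance by passing to the mod-$p$ restricted free Lie algebra and matching the rank $(n-1)!$ on both sides) is the delicate point that I expect to require the most care.
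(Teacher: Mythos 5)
First, a remark on the comparison: the paper does not prove this theorem at all --- it is quoted from Cohen's paper \cite{Cohen95} as a known result, so there is no internal proof to measure your argument against. Judged on its own terms, your outline follows the standard route (identify $\ker(\bar p_n)$ with the Moore cycles of $\{K_m\}$, then compute them via a Magnus-type expansion and the lower central series), and the first step, the surjectivity argument, and the nilpotency bound $\Gamma_{n+1}(F_n)\subseteq N_n$ in the co-$H$ case are all sound. But there are two genuine gaps.

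The first is the passage from ``$\bar d_i(w)=1$ in $K_{n-1}$'' to ``every monomial of $\mu(w)-1$ contains each $X_j$''. The Magnus expansion lives on $F_n$, while the cycle condition only says that a lift $\tilde w\in F_n$ satisfies $\tilde d_i(\tilde w)\in N_{n-1}$, and $N_{n-1}$ has nontrivial Magnus image. For instance in the co-$H$ case with $n=3$, a lift of a cycle may be multiplied by $[[x_1,x_2],x_1]\in N_3$, producing monomials such as $X_1X_2X_1$ that omit $X_3$; so the ``every variable occurs'' statement fails for arbitrary lifts, and you have not shown that a lift satisfying it exists, nor that the degree-$n$ multilinear leading term is independent of the lift. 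This is repairable --- e.g.\ by arguing directly in $K_n$ with the lower central series (the faces are injective on each graded piece $\Gamma_k/\Gamma_{k+1}$ spanned by commutators omitting the deleted letter, forcing a cycle into $\Gamma_n(K_n)\cong{\rm Lie}(n)$) --- but as written the leading-term map $\ell$ is not yet defined on $\ker(\bar p_n)$. The second gap you identify yourself: the $\mathbb{Z}/p^r$ case, which is half of the theorem, is only sketched, and the injectivity step there (that no cycle of the free product of copies of $\mathbb{Z}/p^r$ with vanishing multilinear leading term survives) is exactly the hard content of Cohen's result; observing that the relators $(1+X_i)^{p^r}-1$ contribute only monomials with a repeated variable handles well-definedness of $\ell$ but not its injectivity, since the Magnus map over $\mathbb{Z}/p^r$ is far from injective on $K_n$. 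Until both points are addressed the proof is incomplete.
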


\subsection{Wu's approach: bi-$\Delta$-extension and skeleton filtration of bi-$\Delta$-groups}\label{Wuapp} (\cite{Wu})
\subsubsection{The machinery}\label{machin}
According to the construction in Section \ref{Cohensection}, for any $\Delta$-set (group) $\mathcal{S}=\{S_n, d_i\}_{n\geq 0}$ we may define a \textit{Cohen set (group)} for each $n$ by 
\begin{equation*}
\mathfrak{h}_n\mathcal{S}=\{x\in S_n~|~d_0x=d_1x=\cdots=d_nx\},
\end{equation*}
and also the \textit{total Cohen set (group)} by 
\begin{equation*}
\mathfrak{h}\mathcal{S}=\lim_{n}\{p_n=d_{i\vert}: \mathfrak{h}_n\mathcal{S}\rightarrow \mathfrak{h}_{n-1}\mathcal{S}\}.
\end{equation*}
We should notice that $\mathfrak{h}_{n+1}$ defined in Section \ref{Cohensection} is denoted by $\mathfrak{h}_n$ here. The constructions are functorial.
\begin{lemma}
Given a morphism of $\Delta$-sets (groups) 
\[e: \mathcal{S}\rightarrow \mathcal{T},\]
there is an induced filtration preserving morphism of total Cohen sets (groups) 
\[ \mathfrak{h}e: \mathfrak{h}\mathcal{S}\rightarrow \mathfrak{h}\mathcal{T}.\]
\end{lemma}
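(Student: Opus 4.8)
The plan is to establish functoriality of the Cohen construction one filtration level at a time and then pass to the inverse limit, so that essentially nothing beyond the universal property of $\lim$ is needed. Recall that a morphism of $\Delta$-sets (groups) $e:\mathcal{S}\to\mathcal{T}$ is by definition a sequence of maps (group homomorphisms) $e_n:S_n\to T_n$ commuting with every face operator, i.e.\ $d_i\circ e_n=e_{n-1}\circ d_i$ for all $0\le i\le n$. The entire argument consists of feeding this single compatibility relation through the equalizer description of $\mathfrak{h}_n$ and the projections $p_n$.

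First I would produce the level maps $\mathfrak{h}_n e:\mathfrak{h}_n\mathcal{S}\to\mathfrak{h}_n\mathcal{T}$. Take $x\in\mathfrak{h}_n\mathcal{S}$, so that $d_0x=\cdots=d_nx$; call this common value $y$. Applying the commutation relation gives $d_i\,e_n(x)=e_{n-1}(d_ix)=e_{n-1}(y)$ for every $i$, so the elements $d_0\,e_n(x),\ldots,d_n\,e_n(x)$ coincide and $e_n(x)\in\mathfrak{h}_n\mathcal{T}$. Hence $e_n$ restricts to a map $\mathfrak{h}_n e:=e_n|_{\mathfrak{h}_n\mathcal{S}}$, which in the group case is a homomorphism as the restriction of one.

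Next I would check compatibility with the tower. The projection $p_n:\mathfrak{h}_n\mathcal{S}\to\mathfrak{h}_{n-1}\mathcal{S}$ is the restriction of any single face operator, well defined because all of them agree on $\mathfrak{h}_n\mathcal{S}$ and, by the computation just made, land in $\mathfrak{h}_{n-1}\mathcal{S}$. For $x\in\mathfrak{h}_n\mathcal{S}$ one then has
\[
p_n^{\mathcal{T}}\bigl(\mathfrak{h}_n e(x)\bigr)=d_i\,e_n(x)=e_{n-1}(d_ix)=\mathfrak{h}_{n-1}e\bigl(p_n^{\mathcal{S}}(x)\bigr),
\]
so $\{\mathfrak{h}_n e\}_{n\ge0}$ is a morphism of the two inverse systems. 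The universal property of the inverse limit yields a unique map $\mathfrak{h}e:\mathfrak{h}\mathcal{S}\to\mathfrak{h}\mathcal{T}$ with $\mathrm{pr}_n^{\mathcal{T}}\circ\mathfrak{h}e=\mathfrak{h}_n e\circ\mathrm{pr}_n^{\mathcal{S}}$ for every $n$, where $\mathrm{pr}_n$ denotes the canonical projection of the limit onto its $n$-th stage; this last identity is precisely the assertion that $\mathfrak{h}e$ is filtration preserving. In the group case $\mathfrak{h}e$ is a homomorphism, since the inverse limit of groups is a group and the induced map is built from the homomorphisms $\mathfrak{h}_n e$.

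There is no serious obstacle here: the content is purely formal functoriality. The only point that needs a word of care, and the closest thing to a genuine step, is verifying that the level maps are well defined, i.e.\ that $e_n$ carries the equalizer $\mathfrak{h}_n\mathcal{S}$ into the equalizer $\mathfrak{h}_n\mathcal{T}$ and that the restricted projections used to form the limit agree regardless of which face operator one restricts. Both follow at once from the single relation $d_i\circ e_n=e_{n-1}\circ d_i$, so once that is recorded the remainder is routine bookkeeping and the conclusion follows.
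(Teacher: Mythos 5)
Your argument is correct and is exactly the routine verification intended here (the paper omits the proof, citing it as formal functoriality following \cite{Wu}): restricting $e_n$ to the equalizer $\mathfrak{h}_n\mathcal{S}$, checking compatibility with the restricted face maps $p_n=d_{i\vert}$, and passing to the inverse limit. Nothing is missing.
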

\begin{lemma}
The functor $\mathfrak{h}$ is left exact, i.e., given any short exact sequence of $\Delta$-groups
\[\{1\}\rightarrow \mathcal{H}\rightarrow \mathcal{G}\rightarrow \mathcal{K}\rightarrow \{1\},\] 
the induced sequence 
\[\{1\}\rightarrow \mathfrak{h}\mathcal{H}\rightarrow \mathfrak{h}\mathcal{G}\rightarrow \mathfrak{h}\mathcal{K}\] 
is exact.
\end{lemma}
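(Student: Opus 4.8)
The plan is to observe that $\mathfrak{h}$ is assembled from two successive limit constructions — the level-wise equalizer $\mathfrak{h}_n$ and the inverse limit $\lim_n$ — and that each of these is left exact, so that left exactness of the composite follows. Since we are working with groups that need not be abelian, I would verify both steps directly rather than invoke an abstract principle. Throughout, write $\alpha\colon\mathcal{H}\to\mathcal{G}$ and $\beta\colon\mathcal{G}\to\mathcal{K}$ for the given morphisms of $\Delta$-groups; by hypothesis each $\alpha_m$ is injective, each $\beta_m$ is surjective with ${\rm Im}(\alpha_m)={\rm Ker}(\beta_m)$, and all $\alpha_m,\beta_m$ commute with the face operators $d_i$.

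First I would fix a level $n$ and prove that
\[\{1\}\rightarrow \mathfrak{h}_n\mathcal{H}\rightarrow \mathfrak{h}_n\mathcal{G}\rightarrow \mathfrak{h}_n\mathcal{K}\]
is exact. Injectivity of $\mathfrak{h}_n\alpha$ is immediate, since $\mathfrak{h}_n\mathcal{H}\subseteq H_n$, $\mathfrak{h}_n\mathcal{G}\subseteq G_n$, and $\alpha_n$ is already injective. For exactness in the middle, take $g\in\mathfrak{h}_n\mathcal{G}$ whose image in $\mathfrak{h}_n\mathcal{K}\subseteq K_n$ is trivial, so that $\beta_n(g)=1$. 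Level-$n$ exactness of the original sequence yields $g=\alpha_n(h)$ for a unique $h\in H_n$, and it remains to check that $h$ actually lies in $\mathfrak{h}_n\mathcal{H}$, i.e.\ that all $d_ih$ coincide. Because $g\in\mathfrak{h}_n\mathcal{G}$ we have $d_0g=\cdots=d_ng$, and naturality gives $\alpha_{n-1}(d_ih)=d_i\alpha_n(h)=d_ig$, so the elements $\alpha_{n-1}(d_ih)$ all agree; the injectivity of $\alpha_{n-1}$ then forces $d_0h=\cdots=d_nh$, whence $h\in\mathfrak{h}_n\mathcal{H}$. This is the one spot where exactness at the adjacent level $n-1$ — precisely the injectivity of $\alpha_{n-1}$ — is used, and it is the crux of the argument.

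Next I would assemble these level-wise sequences into a morphism of towers. The projections $p_n$ are restrictions of the face maps $d_i$, so they commute with $\alpha$ and $\beta$; hence $\{\mathfrak{h}_n\mathcal{H}\}$, $\{\mathfrak{h}_n\mathcal{G}\}$, $\{\mathfrak{h}_n\mathcal{K}\}$ form inverse systems and the induced maps are morphisms of inverse systems fitting into the exact sequences just established. Applying $\lim_n$ and using the left exactness of the inverse limit functor on groups (a limit preserves monomorphisms and kernels) yields the exactness of
\[\{1\}\rightarrow \mathfrak{h}\mathcal{H}\rightarrow \mathfrak{h}\mathcal{G}\rightarrow \mathfrak{h}\mathcal{K},\]
which is exactly the assertion.

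The main obstacle is bookkeeping rather than conceptual depth: one must keep the nonabelian setting honest, verifying that each $\mathfrak{h}_n\mathcal{S}$ is genuinely a subgroup (the equalizer of a family of group homomorphisms is a subgroup even when the target is nonabelian) and confirming that no step secretly inverts a ``difference'' of two noncommuting elements. I would also emphasize that one should \emph{not} expect the final map $\mathfrak{h}\mathcal{G}\to\mathfrak{h}\mathcal{K}$ to be surjective: surjectivity can fail at the inverse-limit stage precisely when the relevant $\lim^1$ term of the kernel tower is nontrivial, which is exactly why only left exactness is claimed.
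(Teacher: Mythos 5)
Your proof is correct. The paper states this lemma without proof (it is part of the review of Wu's machinery, with the details deferred to \cite{Wu}), and your two-step argument — levelwise exactness of the equalizers $\mathfrak{h}_n$, using injectivity of $\alpha_{n-1}$ to pull the cycle condition back to $\mathcal{H}$, followed by left exactness of the inverse limit — is exactly the standard argument one would supply. One small quibble with your closing remark: surjectivity of $\mathfrak{h}\mathcal{G}\to\mathfrak{h}\mathcal{K}$ can already fail at a single level $n$ (a $\beta_n$-preimage of an element of $\mathfrak{h}_n\mathcal{K}$ need not satisfy $d_0g=\cdots=d_ng$), not only through a nonvanishing $\lim^1$ of the kernel tower; this does not affect the proof of the stated left exactness.
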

In general $p_n$ may not be surjective (see Theorem $1.2.2$ of \cite{Wu}). However, it may be surjective if the objects involved have more structures, for instance, the bi-$\Delta$-group structure. 
\begin{definition}
A \textit{bi-$\Delta$-set (group)} $\mathcal{S}=\{S_n, d_j, d^i\}_{n\geq 0}$ is a $\Delta$- and co-$\Delta$-set (group) with $d_j$ and $d^i$ as the structural morphisms respectively such that 
\begin{equation*}
d_jd^i=\left\{\begin{array}{ll}
d^{i-1}d_j& j<i,\\
{\rm id}& j=i,\\
d^{i}d_{j-1}&j>i.
\end{array}
\right.
\end{equation*}
A \textit{weak bi-$\Delta$-group} $\mathcal{G}=\{G_n\}_{n\geq 0}$ is a bi-$\Delta$-set such that each $G_n$ is a group and all faces $d_j$ are group homomorphisms. 
\end{definition} 
\begin{proposition}[Proposition $1.2.1$ in \cite{Wu}]\label{progroup}
For any weak bi-$\Delta$-group $\mathcal{G}$ and each $n$, the map $p_n: \mathfrak{h}_n\mathcal{G}\rightarrow \mathfrak{h}_{n-1}\mathcal{G}$ is an epimorphism with kernel the Moore cycles group $\mathcal{Z}_n(\mathcal{G})$, where 
\[\mathcal{Z}_n(\mathcal{G})=\bigcap_{i=0}^{n} {\rm Ker}(d_i: G_n\rightarrow G_{n-1}).\]
\end{proposition}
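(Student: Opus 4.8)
The plan is to split the statement into its two assertions and dispatch them separately, since the identification of the kernel is formal whereas the surjectivity of $p_n$ is the substantive point and is exactly where the co-$\Delta$-structure (the cofaces $d^i$) enters. First observe that each equalizer $\{x : d_ix = d_jx\}$ is a subgroup because the faces are group homomorphisms, so $\mathfrak{h}_n\mathcal{G}$ and $\mathfrak{h}_{n-1}\mathcal{G}$ are subgroups and $p_n = d_{i\vert}$ is a homomorphism; in particular ${\rm Ker}(p_n)$ is automatically normal. For $x \in \mathfrak{h}_n\mathcal{G}$ we have $d_0x = \cdots = d_nx$ and $p_n(x)$ is this common value, so $p_n(x) = 1$ holds if and only if $d_ix = 1$ for all $0 \le i \le n$, i.e.\ if and only if $x \in \mathcal{Z}_n(\mathcal{G})$; conversely any $x \in \mathcal{Z}_n(\mathcal{G})$ has all faces equal (to $1$) and hence lies in $\mathfrak{h}_n\mathcal{G} \cap {\rm Ker}(p_n)$. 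This yields ${\rm Ker}(p_n) = \mathcal{Z}_n(\mathcal{G})$ with no further work.

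The real content is surjectivity. Given $y \in \mathfrak{h}_{n-1}\mathcal{G}$, set $P = p_{n-1}(y) = d_0y = \cdots = d_{n-1}y$; I would produce $x \in G_n$ with $d_jx = y$ for all $0 \le j \le n$, since such an $x$ then lies in $\mathfrak{h}_n\mathcal{G}$ and satisfies $p_n(x) = y$. The idea is to build $x$ by successive corrections, fixing one more face at each stage and using the cofaces as partial sections (recall $d_id^i = {\rm id}$). Precisely, I would prove by induction on $m$ with $0 \le m \le n$ that there is $x_m \in G_n$ with $d_jx_m = y$ for $0 \le j \le m$ and with $d_jx_m$ equal to a single element $c_m$ for $m < j \le n$. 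The base case is $x_0 = d^0 y$: the mixed relation gives $d_0 x_0 = y$ (the case $j=i$) and $d_j x_0 = d^0 d_{j-1} y = d^0 P$ for $j \ge 1$ (the case $j > i$), so $c_0 = d^0 P$. For the inductive step I would set $v = c_m^{-1}y$ and $x_{m+1} = x_m \cdot d^{m+1}(v)$; because the faces are homomorphisms, understanding the faces of $x_{m+1}$ reduces to computing the faces of the correction $d^{m+1}(v)$, for which the three cases of $d_j d^{m+1}$ (namely $d^m d_j$ for $j < m+1$, the identity for $j = m+1$, and $d^{m+1} d_{j-1}$ for $j > m+1$) are exactly what is required. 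Taking $x = x_n$ then finishes the argument.

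The main obstacle is the bookkeeping in the inductive step: one must show that $d^{m+1}(v)$ has precisely the faces needed so that $x_{m+1}$ acquires a correct $(m{+}1)$-st face without disturbing the earlier ones and keeps its higher faces equal. Concretely, this comes down to verifying that $v = c_m^{-1}y$ satisfies $d_jv = 1$ for $j \le m$ and that $d_jv$ is independent of $j$ for $j \ge m+1$. The first follows from $d_jc_m = P$ for $j \le m$, which I would derive from the simplicial identities $d_id_j = d_{j-1}d_i$ ($i<j$) among the faces together with the inductive hypothesis $d_ix_m = y$ (for instance $d_i c_m = d_i d_{m+1} x_m = d_m d_i x_m = d_m y = P$). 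The second follows from the fact that $d_jc_m = d_{m+1}c_m$ is constant for $j \ge m+1$, again by the face relations and the hypothesis $d_ix_m = c_m$ for $i > m$. Once these two facts are in hand, feeding them into the three-case formula for $d_j d^{m+1}$ gives $d_jx_{m+1} = y$ for $j \le m+1$ and a common value $c_{m+1}$ for $j > m+1$, closing the induction and hence establishing that $p_n$ is an epimorphism.
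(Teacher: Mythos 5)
The paper gives no proof of this proposition (it is quoted from Wu's memoir), so I am judging your argument against the standard proof of that cited result. Your overall strategy is exactly the right one and is essentially the standard argument: the kernel identification is formal, and surjectivity is obtained by an inductive correction, building $x_m\in G_n$ whose first $m+1$ faces equal $y$ and whose remaining faces share a common value $c_m$, using the cofaces as partial sections. Your base case $x_0=d^0y$ and your computations of the faces of $c_m$ from the simplicial identities (namely $d_jc_m=P$ for $j\le m$ and $d_jc_m=d_{m+1}c_m$ for $j\ge m+1$, where $P=d_0y=\cdots=d_{n-1}y$) are all correct.

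There is, however, one step that does not survive the \emph{weak} hypothesis. In a weak bi-$\Delta$-group only the faces $d_j$ are group homomorphisms; the cofaces $d^i$ are merely maps satisfying the mixed identities, and nothing forces $d^i(1)=1$ (the relation $d_id^i=\mathrm{id}$ only places $d^i(1)$ in $\ker d_i$). Your correction term is $d^{m+1}(v)$ with $v=c_m^{-1}y$, and for $j\le m$ your own computation gives $d_jx_{m+1}=y\cdot d^m(d_jv)=y\cdot d^m(1)$, which you cannot conclude equals $y$; so as written the induction does not close for a general weak bi-$\Delta$-group (it would be fine if the cofaces were homomorphisms, but that is precisely what ``weak'' drops). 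The repair is local and preserves your scheme: take the correction to be a product of coface images rather than the coface of a product, i.e.\ set $x_{m+1}=x_m\cdot\bigl(d^{m+1}(d_{m+1}x_m)\bigr)^{-1}\cdot d^{m+1}(y)$. Since the faces are homomorphisms, for $j\le m$ one gets $d_jx_{m+1}=y\cdot\bigl(d^m(P)\bigr)^{-1}\cdot d^m(P)=y$; for $j=m+1$ one gets $c_m\cdot c_m^{-1}\cdot y=y$; and for $j>m+1$ one gets $c_m\cdot\bigl(d^{m+1}(Q)\bigr)^{-1}\cdot d^{m+1}(P)$ with $Q=d_{m+1}c_m$, a value independent of $j$. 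This uses the structural identities only on single elements, and with that change your argument is complete.
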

\begin{definition}
An \textit{$n$-partial bi-$\Delta$-group} $\mathcal{G}=\{G_k\}_{0\leq k\leq n}$ is a finite sequence of groups with faces $d_j$ and cofaces $d^i$ satisfying all of the structural relations of a bi-$\Delta$-group up to dimension $n$.
\end{definition}
\begin{definition}
Given an $n$-partial bi-$\Delta$-group $\mathcal{G}=\{G_k\}_{0\leq k\leq n}$, the \textit{bi-$\Delta$-extension} of $\mathcal{G}$ is a bi-$\Delta$-group $\Phi_n \mathcal{G}$ with inclusion $\mathcal{G} \hookrightarrow \Phi_n \mathcal{G}$ such that the following universal property holds:
\begin{itemize}
\item[~] For any bi-$\Delta$-group $\mathcal{K}$ and any $n$-partial bi-$\Delta$-group morphism $f: \mathcal{G}\rightarrow \mathcal{K}$, there exists a unique morphism of bi-$\Delta$-groups $\tilde{f}: \Phi_n \mathcal{G}\rightarrow \mathcal{K}$ such that $\tilde{f}_{\vert \mathcal{G}}=f$.
\end{itemize}
The existence of the bi-$\Delta$-extension was guaranteed by the explicit construction of $\Phi_n\mathcal{G}$ in \cite{Wu}. Roughly speaking, we first construct an $(n+1)$-partial bi-$\Delta$-group $\Phi_n^{n+1}\mathcal{G}$ by defining the $(n+1)$-stage to be the $(n+2)$-fold self free product of $G_n$ module the coface relations. The faces are then induced by the usual projections as we did for free groups. We then can iterate the process to get a tower of partial bi-$\Delta$-groups
\begin{equation*}
\mathcal{G}=\Phi_n^{n}\mathcal{G}\subseteq \Phi_n^{n+1}\mathcal{G}\subseteq \Phi_n^{n+2}\mathcal{G}\subseteq \cdots,
\end{equation*}
and set $\Phi_n \mathcal{G}=\bigcup_{k=0}^{\infty}  \Phi_n^{n+k}\mathcal{G}$. It is then straightforward to check that $\Phi_n\mathcal{G}$ satisfies the universal property.
\end{definition}
\begin{definition}
Let $\mathcal{G}=\{G_n\}_{n\geq 0}$ be a bi-$\Delta$-group, the \textit{$n$-skeleton of $\mathcal{G}$} is defined to be 
\[{\rm sk}_n \mathcal{G}=\Phi_n {\rm Par}_n \mathcal{G},\]
where ${\rm Par}_n \mathcal{G}=\{G_k\}_{0\leq k\leq n}$ is an $n$-partial bi-$\Delta$-group with the induced faces and cofaces from $\mathcal{G}$.
\end{definition}
\begin{lemma}
Given any bi-$\Delta$-group morphism $f: \mathcal{G}\rightarrow \mathcal{K}$, 
we have a commutative diagram
\begin{equation*}
 \xymatrix{
   {\rm sk}_n \mathcal{G} 
        \ar[rr]^{{\rm sk}_n(f)} 
        \ar[d]
        &&
   {\rm sk}_n \mathcal{K}
      \ar[d] 
      \\
   \mathcal{G} 
     \ar[rr]^{f}
     &&
    \mathcal{K}. 
 }
\end{equation*}
\end{lemma}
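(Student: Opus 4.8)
The plan is to extract everything from the universal property of the bi-$\Delta$-extension, so that the lemma reduces to a formal argument whose only real content is the uniqueness clause. First I would record that the ingredients are functorial. Truncation sends a bi-$\Delta$-group morphism $f\colon \mathcal{G}\to\mathcal{K}$ to the $n$-partial bi-$\Delta$-group morphism ${\rm Par}_n(f)\colon {\rm Par}_n\mathcal{G}\to {\rm Par}_n\mathcal{K}$ obtained by restricting $f$ to the stages $G_k$ with $0\le k\le n$; this is an $n$-partial morphism because the structural relations that $f$ respects in all dimensions are in particular respected up to dimension $n$. I would also fix, for every bi-$\Delta$-group $\mathcal{G}$, the canonical vertical map $\eta_{\mathcal{G}}\colon {\rm sk}_n\mathcal{G}=\Phi_n{\rm Par}_n\mathcal{G}\to\mathcal{G}$, defined as the unique bi-$\Delta$-group morphism extending the inclusion ${\rm Par}_n\mathcal{G}\hookrightarrow\mathcal{G}$; this extension exists and is unique because $\mathcal{G}$ is a genuine bi-$\Delta$-group, so the universal property of $\Phi_n$ applies with target $\mathcal{G}$.

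Next I would construct ${\rm sk}_n(f)$ itself. Consider the composite $n$-partial bi-$\Delta$-group morphism
\[
{\rm Par}_n\mathcal{G}\xrightarrow{{\rm Par}_n(f)}{\rm Par}_n\mathcal{K}\hookrightarrow {\rm sk}_n\mathcal{K},
\]
whose target is a full bi-$\Delta$-group. By the universal property of ${\rm sk}_n\mathcal{G}=\Phi_n{\rm Par}_n\mathcal{G}$ this composite extends uniquely to a bi-$\Delta$-group morphism ${\rm sk}_n(f)\colon {\rm sk}_n\mathcal{G}\to {\rm sk}_n\mathcal{K}$, which by construction restricts on ${\rm Par}_n\mathcal{G}$ to the displayed composite.

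Finally I would prove commutativity, i.e.\ $f\circ\eta_{\mathcal{G}}=\eta_{\mathcal{K}}\circ{\rm sk}_n(f)$ as morphisms ${\rm sk}_n\mathcal{G}\to\mathcal{K}$. Both sides are bi-$\Delta$-group morphisms out of the extension ${\rm sk}_n\mathcal{G}=\Phi_n{\rm Par}_n\mathcal{G}$ into the genuine bi-$\Delta$-group $\mathcal{K}$, so by the uniqueness half of the universal property it suffices to check that they agree after restriction to ${\rm Par}_n\mathcal{G}$. On ${\rm Par}_n\mathcal{G}$ the map $\eta_{\mathcal{G}}$ is the inclusion into $\mathcal{G}$, so the left-hand composite restricts to $f|_{{\rm Par}_n\mathcal{G}}={\rm Par}_n(f)$ regarded as a map into $\mathcal{K}$; on the other side ${\rm sk}_n(f)$ restricts to ${\rm Par}_n(f)$ followed by ${\rm Par}_n\mathcal{K}\hookrightarrow{\rm sk}_n\mathcal{K}$, while $\eta_{\mathcal{K}}$ restricts on ${\rm Par}_n\mathcal{K}$ to the inclusion into $\mathcal{K}$, so the right-hand composite also restricts to ${\rm Par}_n(f)$ into $\mathcal{K}$. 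The two restrictions coincide, and uniqueness upgrades this to equality on all of ${\rm sk}_n\mathcal{G}$.

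The argument is almost entirely formal, so I expect no genuinely hard step; the one point that requires care is the bookkeeping of the restrictions—verifying that each composite, after restriction to ${\rm Par}_n\mathcal{G}$ (respectively ${\rm Par}_n\mathcal{K}$), really is the $n$-partial morphism needed to invoke the uniqueness clause, and that each outer map lands in a \emph{full} bi-$\Delta$-group so that the universal property is legitimately applicable.
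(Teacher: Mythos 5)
Your proof is correct and follows the same route as the paper: the paper likewise defines ${\rm sk}_n(f)$ by applying the universal property of $\Phi_n$ to the composite ${\rm Par}_n\mathcal{G}\to{\rm Par}_n\mathcal{K}\hookrightarrow{\rm sk}_n\mathcal{K}$ and deduces commutativity of the square from the uniqueness clause. You have simply spelled out the restriction bookkeeping that the paper leaves implicit.
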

\begin{proof}
The lemma follows from the diagram 
\begin{equation*}
 \xymatrix{
 {\rm Par}_n \mathcal{G} \ar[rr]_{\Phi_n} \ar[d]^{{\rm Par}_n(f)}
         \ar@{^{(}->}@/^1pc/[rrrr]&&
  {\rm sk}_n \mathcal{G} \ar@{.>}[rr] \ar@{.>}[d]^{{\rm sk}_n(f)} &&
   \mathcal{G}  \ar[d]^{f}\\
  {\rm Par}_n \mathcal{K} \ar[rr]_{\Phi_n} 
  \ar@{_{(}->}@/_1pc/[rrrr] &&
  {\rm sk}_n \mathcal{K} \ar@{.>}[rr] &&
  \mathcal{K},
 }
\end{equation*}
which is ensured by the universal property of $\Phi_n$.
\end{proof}
Given any group $G$, we may view it as a trivial $n$-partial bi-$\Delta$-group for any finite $n$ by defining the top stage to be $G$ and the remaining ones to be trivial groups. Then the only choices for the faces and cofaces  are the trivial morphisms, i.e., 
\[
\begin{tikzcd}
 \{1\} 
   \ar[r, shift left=1.2ex,"0"]
    \ar[r, shift left=0.4ex]
    &
\{1\} 
    \ar[l, shift left=0.4ex]
    \ar[l, shift left=1.2ex, "0"]
    \ar[r, shift left=2ex,"0"]
    \ar[r, shift left=1.2ex]
    \ar[r, shift left=0.4ex]
    &
\ldots
   \ar[l, shift left=2ex,"0"]
    \ar[l, shift left=1.2ex]
    \ar[l, shift left=0.4ex]
    \ar[r, shift left=2ex,"0"]
    \ar[r, shift left=1ex]
    \arrow{r}[description]{\vdots}
    &
\{1\} 
  \ar[l, shift left=2ex,"0"]
    \ar[l, shift left=1ex]
    \ar[r, shift left=2ex,"0"]
    \ar[r, shift left=1ex]
    \arrow{r}[description]{\vdots}
    &
G.
\ar[l, shift left=2ex,"0"]
    \ar[l, shift left=1ex]
\end{tikzcd}
\]
By abuse of notation, we may write $\Phi_n(G)$ to denote the bi-$\Delta$-extension of this trivial $n$-partial bi-$\Delta$-group. Now suppose we have a bi-$\Delta$-group $\mathcal{K}$ and a group homomorphism $\phi: G\rightarrow \mathcal{Z}_n\mathcal{K}$. Since $G$ is trivial as a partial bi-$\Delta$-group, the composition $G\rightarrow \mathcal{Z}_n\mathcal{K}\hookrightarrow \mathfrak{h}_n\mathcal{K}\hookrightarrow K_n$ determines a morphism of $n$-partial bi-$\Delta$-groups:
\[\phi: G\rightarrow {\rm Par}_n \mathcal{K}.\]
We then have a bi-$\Delta$-extension of $\phi$ by the diagram
\begin{equation*}
\xymatrix{
G \ar[rr]^{\phi} \ar[d] &&
{\rm Par}_n \mathcal{K} \ar@{_{(}->}[d] \ar@{^{(}->}[rrd] \\
\Phi_n G \ar[rr]^{\phi} &&
{\rm sk}_n \mathcal{K} \ar[rr]&&
\mathcal{K},
}
\end{equation*}
which induces a homomorphism of the total Cohen groups
\begin{equation*}
\mathfrak{h}\phi: \mathfrak{h}\Phi_n G\rightarrow  \mathfrak{h}{\rm sk}_n \mathcal{K} \rightarrow  \mathfrak{h}\mathcal{K},
\end{equation*}
such that $\mathfrak{h}_n\phi=i \circ\phi: G\rightarrow \mathcal{Z}_n\mathcal{K}\hookrightarrow \mathfrak{h}_n\mathcal{K}$.

There is another type of bi-$\Delta$-extension which concerns relations in Cohen groups. 
\begin{definition}\label{normalext0}
Suppose we have a bi-$\Delta$-group $\mathcal{G}$ and an inclusion of $n$-partial bi-$\Delta$-groups $i: \mathcal{H}\longrightarrow {\rm Par}_{n}\mathcal{G}$ with $\mathcal{H}_n\unlhd \mathcal{G}_n$, then the \textit{normal bi-$\Delta$-extension} of $\mathcal{H}$ in $\mathcal{G}$ is a bi-$\Delta$-group $\mathcal{N}_\mathcal{G}\mathcal{H}$ such that 
\[{\rm Par}_{n}\mathcal{N}_\mathcal{G}\mathcal{H}=\mathcal{H},\]
and also the following property holds:
\begin{itemize}
  \item[~] For any sub-bi-$\Delta$-groups $\mathcal{K}$ of $\mathcal{G}$ with ${\rm Par}_{n}\mathcal{K}=\mathcal{H}$ and $\mathcal{K}_j \unlhd \mathcal{G}_j$ for any $j\geq n$, we have an injection of bi-$\Delta$-groups $\mathcal{N}_\mathcal{G}\mathcal{H}\hookrightarrow \mathcal{K}$.
\end{itemize}
In particular, if $\mathcal{H}=H$ is the trivial $n$-partial bi-$\Delta$-subgroup of $\mathcal{G}$ and $H\unlhd \mathcal{G}_n$, we may call $\mathcal{N}_{\mathcal{G},n}H=\mathcal{N}_\mathcal{G}H$ the \textit{normal bi-$\Delta$-extension} of the group $H$.
\end{definition}
The explicit construction of such an extension can be described as follows. For such $\mathcal{H}$, we may define $\mathcal{N}_\mathcal{G}\mathcal{H}_{n+1}\in \mathcal{G}_{n+1}$ to be the normal closure generated by $d^ix$ for all $x\in \mathcal{H}_n$. It is easy to check that $\mathcal{H}\coprod \mathcal{N}_\mathcal{G}\mathcal{H}_{n+1}$ with the induced faces and cofaces from $\mathcal{G}$ is an $(n+1)$-partial bi-$\Delta$-group. Then we may iterate the process and get the required bi-$\Delta$-extension $\mathcal{N}_\mathcal{G}\mathcal{H}$.

Now suppose we have a morphism of bi-$\Delta$-groups $f: \mathcal{G}\rightarrow \mathcal{K}$ and a composition map of $n$-partial bi-$\Delta$-groups 
\[\mathcal{H}\stackrel{i}{\hookrightarrow} {\rm Par}_n\mathcal{G}\stackrel{{\rm Par}_n f}{\longrightarrow} {\rm Par}_n \mathcal{K},\]
which is trivial. If $\mathcal{H}_n$ is a normal subgroup of $\mathcal{G}_n$, then we have the extension morphism
\[\mathcal{N}_\mathcal{G}\mathcal{H}\stackrel{i}{\hookrightarrow}\mathcal{G}\stackrel{ f}{\longrightarrow} \mathcal{K},\]
which is also trivial.  Hence the induced homomorphism of Cohen groups
\[\mathfrak{h}\mathcal{N}_\mathcal{G}\mathcal{H}\stackrel{\mathfrak{h}i}{\hookrightarrow}\mathfrak{h}\mathcal{G}\stackrel{\mathfrak{h}f}{\longrightarrow} \mathfrak{h}\mathcal{K}\]
is also trivial. 

We may summarize a useful case in the following lemma:
\begin{lemma}\label{Cohenrelation}
Given a bi-$\Delta$-group $\mathcal{K}$ and a group $G$ with a group homomorphism $\phi: G\rightarrow \mathcal{Z}_n\mathcal{K}$. Suppose we also have an $m$-partial bi-$\Delta$-subgroup $\mathcal{H}$ of ${\rm Par}_m \Phi_n G$ such that $\mathcal{H}_m\unlhd \Phi_n G_m$ and the composition morphism 
\[\mathcal{H}\stackrel{i}{\hookrightarrow} {\rm Par}_m \Phi_n G \stackrel{{\rm Par}_m \phi}{\longrightarrow} {\rm Par}_m \mathcal{K}\]
is trivial, then the morphism of Cohen groups 
\[\mathfrak{h}\mathcal{N}_{\Phi_nG}\mathcal{H}\stackrel{\mathfrak{h}i}{\hookrightarrow}\mathfrak{h}\Phi_nG\stackrel{\mathfrak{h}\phi}{\longrightarrow} \mathfrak{h}\mathcal{K}\]
is trivial.

In particular, if we are given a trivial $m$-partial bi-$\Delta$-normal subgroup $H$ of $\Phi_n G$ such that 
\[H\stackrel{i}{\hookrightarrow}  \Phi_n G_m \stackrel{\phi_m}{\longrightarrow} \mathcal{K}_m\]
is trivial, then the morphism of Cohen groups 
\[\mathfrak{h}\mathcal{N}_{\Phi_nG, m}H\stackrel{\mathfrak{h}i}{\hookrightarrow}\mathfrak{h}\Phi_nG\stackrel{\mathfrak{h}\phi}{\longrightarrow} \mathfrak{h}\mathcal{K}\]
is trivial.
\end{lemma}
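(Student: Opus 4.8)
The plan is to recognize the statement as the specialization of the general construction carried out in the paragraphs immediately preceding the lemma, taking $\mathcal{G}=\Phi_n G$ and $f$ to be the bi-$\Delta$-extension $\phi\colon \Phi_n G\to \mathcal{K}$ of the given homomorphism $\phi\colon G\to \mathcal{Z}_n\mathcal{K}$. The two hypotheses needed to form the normal bi-$\Delta$-extension $\mathcal{N}_{\Phi_n G}\mathcal{H}$ are exactly that $\mathcal{H}$ is an $m$-partial bi-$\Delta$-subgroup of ${\rm Par}_m\Phi_n G$ with $\mathcal{H}_m\unlhd \Phi_n G_m$, and both are assumed. Thus the entire content of the lemma reduces to two assertions: that triviality of the composition on the $m$-partial level propagates through the extension, and that applying the functor $\mathfrak{h}$ preserves this triviality.

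First I would establish the propagation step by induction on the skeletal level of $\mathcal{N}_{\Phi_n G}\mathcal{H}$. By the explicit construction recalled after Definition \ref{normalext0}, the group $\mathcal{N}_{\Phi_n G}\mathcal{H}_{m+1}$ is the normal closure in $\Phi_n G_{m+1}$ of the set $\{d^i x\mid x\in \mathcal{H}_m,\ 0\le i\le m\}$. Since $\phi\colon \Phi_n G\to \mathcal{K}$ is a morphism of bi-$\Delta$-groups it commutes with all cofaces, so for each such generator $\phi(d^i x)=d^i\phi(x)=d^i(1)=1$, using the hypothesis that $\phi$ is trivial on $\mathcal{H}_m$. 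Because $\phi_{m+1}$ is a group homomorphism it sends the whole normal closure of these generators to the identity, so $\phi\circ i$ is trivial on level $m+1$. Iterating the same argument up the tower $\{\mathcal{N}_{\Phi_n G}\mathcal{H}_{m+k}\}_{k\ge 0}$ shows that $\phi$ is trivial on every level, i.e. the bi-$\Delta$-morphism $\phi\circ i\colon \mathcal{N}_{\Phi_n G}\mathcal{H}\to \mathcal{K}$ factors through the trivial bi-$\Delta$-group.

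Next I would apply $\mathfrak{h}$. By functoriality of $\mathfrak{h}$ one has $\mathfrak{h}\phi\circ \mathfrak{h}i=\mathfrak{h}(\phi\circ i)$, and since $\mathfrak{h}$ carries a trivial morphism of bi-$\Delta$-groups to a trivial morphism of Cohen groups, the triviality of $\phi\circ i$ immediately yields the triviality of $\mathfrak{h}\mathcal{N}_{\Phi_n G}\mathcal{H}\xrightarrow{\mathfrak{h}i}\mathfrak{h}\Phi_n G\xrightarrow{\mathfrak{h}\phi}\mathfrak{h}\mathcal{K}$, which is the first assertion. For the ``in particular'' statement I would note that a trivial $m$-partial bi-$\Delta$-normal subgroup $H$ is precisely the $\mathcal{H}$ with $\mathcal{H}_m=H$ and $\mathcal{H}_k=\{1\}$ for $k<m$; for such $\mathcal{H}$ the composition ${\rm Par}_m\mathcal{H}\to {\rm Par}_m\mathcal{K}$ is trivial if and only if $\phi_m|_H$ is trivial, and $\mathcal{N}_{\Phi_n G,m}H=\mathcal{N}_{\Phi_n G}\mathcal{H}$ by definition, so this is a direct instance of the first part.

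The only point demanding care — more a matter of bookkeeping than genuine difficulty — is checking that the propagation is valid at \emph{every} level of $\mathcal{N}_{\Phi_n G}\mathcal{H}$ rather than just at level $m+1$. This rests on invoking two facts in tandem: that $\phi$ commutes with cofaces on all levels (because it is a morphism of bi-$\Delta$-groups, not merely of $m$-partial ones), and that each $\mathcal{N}_{\Phi_n G}\mathcal{H}_{m+k}$ is generated as a normal subgroup by cofaces of the preceding level. Once these are recorded, the induction closes with no further computation.
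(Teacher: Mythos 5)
Your proposal is correct and follows essentially the same route as the paper, which states this lemma as a summary of the discussion immediately preceding it (triviality propagates through the normal bi-$\Delta$-extension because each new level is the normal closure of coface images of the previous one, and $\mathfrak{h}$ is functorial and sends trivial morphisms to trivial morphisms). Your write-up simply makes explicit the coface-generation induction that the paper leaves implicit.
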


The case when $n=0$ is of special interest. Now the given data is a bi-$\Delta$-group $\mathcal{K}$ with a homomorphism $\phi: G\rightarrow \mathcal{K}_0$, and we have a homomorphism of Cohen groups $\mathfrak{h}\phi:  \mathfrak{h}\Phi_0 G\rightarrow \mathfrak{h}{\rm sk}_0 \mathcal{K} \rightarrow  \mathfrak{h}\mathcal{K}$. According to \cite{Wu}, the bi-$\Delta$-extension of $G$ can be explicitly described up to isomorphism. Indeed, we can define 
\begin{equation*}
(\Phi_0 G)_n=\coprod_{k=1}^{n+1} (G_k=G)
\end{equation*}
as the $(n+1)$-fold self free product of $G$. We may denote $\iota_k: G\hookrightarrow \coprod_{k=1}^{n+1}G_k$ as the inclusion of the $k$-th component. Then the faces $d_i: (\Phi_0 G)_{n}\rightarrow (\Phi_0 G)_{n-1}$ and coface morphisms $d^i: (\Phi_0 G)_{n-1}\rightarrow (\Phi_0 G)_{n}$ can be defined by group homomorphisms which are uniquely determined by the relations
\begin{equation*}
d_i \iota_j=\left\{\begin{array}{ll}
\iota_j& j\leq i,\\
0& j=i+1,\\
\iota_{j-1}&j\geq i+2.
\end{array}
\right.
\ \ \ \  
d^i \iota_j=\left\{\begin{array}{ll}
\iota_j& j\leq i,\\
\iota_{j+1}&j\geq i+1.
\end{array}
\right.
\end{equation*}
By construction, we can easily see that
\[ \iota_{j+1}=d^nd^{n-1}\cdots d^{j+2}d^{j+1}d^{j-1}\cdots d^1d^0 : G\rightarrow (\Phi_0 G)_n.
\]

We notice that when $G=\mathbb{Z}$, $(\Phi_0 G)=\{F_{n+1}\}_{n\geq 0}$ as bi-$\Delta$ groups which was discussed in Section \ref{Cohensection}.

\subsubsection{~} We now apply the above constructions to the bi-$\Delta$-group 
\[\Omega Z^{\ast}(Y)=\{[Y^{\times (n+1)}, \Omega Z]\}_{n\geq 0},\] 
the co-$\Delta$-structure of which is induced by maps
\begin{equation*}
d_i: Y^{\times (n+1)}\rightarrow Y^{\times n}, \ \ \ \ (y_1, y_2, \ldots, y_{n+1})\mapsto (y_1, \ldots, y_i, y_{i+2},\ldots, y_{n+1}). 
\end{equation*}

We start with a representation $\phi: G\rightarrow [Y^{\wedge (n+1)}, \Omega Z]=\mathcal{Z}_n\Omega Z^{\ast}(Y)$. Then according to the construction in Subsection \ref{machin}, we have a bi-$\Delta$-extension $\phi: \Phi_n G\rightarrow {\rm sk}_n \Omega Z^{\ast}(Y) \rightarrow \Omega Z^{\ast}(Y)$ and then a group homomorphism of Cohen groups 
\[\mathfrak{h}\phi:  \mathfrak{h}\Phi_n G\rightarrow \mathfrak{h}{\rm sk}_n \Omega Z^{\ast}(Y) \rightarrow  \mathfrak{h}\Omega Z^{\ast}(Y)\cong [J(Y), \Omega Z].\]
When $n=0$, ${\rm Im}(\phi_m)\subseteq [Y^{\times (m+1)}, \Omega Z]$ is the subgroup generated by 
\[ d^md^{m-1}\cdots d^{j+2}d^{j+1}d^{j-1}\cdots d^1d^0(\phi(G))=p_{j+1}^\ast (\phi(G)),
\]
where $p_{j+1}: Y^{\times (n+1)} \rightarrow Y$ is defined by $p_{j+1}(y_1,\ldots, y_{m+1})=y_{j+1}$ which exactly corresponds to $\iota_{j+1}$. If further $G=\mathbb{Z}$, the map $\phi$ is equivalent to a choice of map $f=\phi(1): Y\rightarrow \Omega Z$. Then the theory developed here covers Cohen's approach. Indeed, we may define 
\[H=\langle [[x_i, x_j], x_i],[[x_i, x_j], x_j]~|~1\leq i\neq j\leq 2 \rangle_N\unlhd (\Phi_0 \mathbb{Z})_1= F(x_1,x_2).\] 
Then the composition 
\[H\hookrightarrow (\Phi_0 \mathbb{Z})_1 \stackrel{\phi_2}{\rightarrow}\Omega Z^{\ast}(Y)_1=[Y^{\times 2}, \Omega Z]\]
is trivial if the reduced diagonal for $Y$ is null homotopic. Hence by Lemma \ref{Cohenrelation} the composition of homomorphisms
\[\mathfrak{h}\mathcal{N}_{\Phi_0 \mathbb{Z},1}H\hookrightarrow \mathfrak{h} \Phi_0 \mathbb{Z}\stackrel{\mathfrak{h}\phi}{\rightarrow} [J(Y), \Omega Z]\]
is trivial. For other cases we can proceed similarly.

 \section{Evaluation map and shuffle relations in Cohen groups}\label{Eva+Shsection}
\noindent Since our goal is to detect relations in the group of homotopy classes of self maps of double loop suspensions, we have to study the loop homomorphism 
\[\Omega: [\Omega\Sigma^2 X, \Omega Z]\rightarrow \lbrack \Omega^2\Sigma^2 X, \Omega^2 Z\rbrack.\]
In this section, we give a combinatorial model of the evaluation map ${\rm ev}: \Sigma\Omega^2\Sigma^2 X\rightarrow \Omega\Sigma^2 X$ as the adjoint of $\Omega( {\rm id})$. We then use this model to develop a general method to detect relations  in Cohen groups. As an illustration we construct the shuffle relations. 
\subsection{The evaluation map ${\rm ev}: \Sigma\Omega^2\Sigma^2 X\rightarrow \Omega\Sigma^2 X$}\label{evaluation}
 In \cite{Milgram74}, Milgram constructed his model for double loop suspensions with a different method. Explicitly, he defined a space of paths in $n$-cube $I^n$ by 
 \begin{equation*}
 {\rm Path}(n)=\big\{
  \sigma: [0,1]\rightarrow I^n~|~\sigma(0)=(0,\ldots,0), \sigma(1)=(1,\ldots,1).
  \big\}
 \end{equation*}
 Then for any connected space $X$, there is a map defined as the composition
 \begin{equation*}
  \psi_n: {\rm Path}(n) \times X^{\times n} \rightarrow \Omega((\Sigma X)^{\times n}) \rightarrow \Omega (J_n(\Sigma X)),
 \end{equation*}
 where $J_n$ is the $n$-th filtration of the classical James construction $J$, the first map is just a shuffle of variables and the second map is the loop of the natural projection. 
On the other hand, Milgram \cite{Milgram} also defined a family of maps 
\begin{equation*}
r_n^\prime: P_n \rightarrow {\rm Path}(n)
\end{equation*}
 by induction on $n$. Indeed, the map sends each vertex to a regular combinatorial path passing $n$ (dimension $1$) edges of $I^n$ in an order determined by the coordinate of the vertex. Any inner point $y$ of $P_n$ can be uniquely expressed as the linear combination of the center $c_n$ of $P_n$ and a point on the boundary, say $z=(z_i, z_j)\in P_i\times P_j$ with $i+j=n$ and $y=(1-t)c_n+tz$ for instance. By induction, we have defined $r_i^\prime(z_i)$ and $r_j^\prime(z_j)$ which determine a path $r_n^\prime(z)$ in $I^n$ through a canonical map 
 \[{\rm Path}(i)\times {\rm Path}(j)\rightarrow {\rm Path}(n).\] 
Then $r_n^\prime(y)$ is obtained by shrinking $r_n^\prime(z)$ to a path $t\cdot r_n^\prime(z)$ connecting $(0,\ldots, 0)$ and $(t,\ldots, t)$, and then joining $(t,\ldots, t)$ with $(1,\ldots ,1)$ by a line segment. It is easy to see this map can be equivariantly extended to $\mathcal{F}(n)$, and we have a map 
\begin{equation*}
r_n: \mathcal{F}(n) \rightarrow {\rm Path}(n),
\end{equation*} 
whose adjoint map $(r_n)^{\sharp}: I\times \mathcal{F}(n)\rightarrow I^n$ is cellular and of degree plus or minus one (Lemma $4.6$ in \cite{Milgram}). Combining the above together, we have constructed a map 
\begin{equation*}
\phi_n: \mathcal{F}(n)\times X^{\times n}\stackrel{r_n\times {\rm id}}{\longrightarrow}{\rm Path}(n)\times X^{\times n} \stackrel{\psi_n}{\longrightarrow} \Omega (J_n(\Sigma X)).
\end{equation*}
Then we can check that the map $\phi_n$ for each $n$ is compatible with the defining relations of $\mathcal{F}(X)$, and the diagrams
\begin{equation*}
    \xymatrix{  \mathcal{F}(n)\times X^{\times n} \ar[rr]^{\phi_n} \ar@{->>}[d]&&
                         \Omega(J_n(\Sigma X)) &\coprod_n \mathcal{F}(n)\times X^{\times n} \ar[rr]^{\coprod_n \Omega i_n\circ\phi_n} \ar@{->>}[d]&&
                         \Omega(J(\Sigma X))\\
                         \mathcal{F}_n(X) \ar@{.>}[urr]_{\tilde{\phi}_n}&&&\mathcal{F}(X) \ar@{.>}[urr]_{\tilde{\phi}}
  }
\end{equation*}
are commutative, where $i_n: J_n(\Sigma X) \rightarrow J(\Sigma X)$ is the natural inclusion. The theorem of Milgram (Corollary $0.20$ in \cite{Milgram74}, Theorem \ref{Milgram}) claims that the map $\tilde{\phi}$ is a homotopy equivalence. We can then choose the adjoint map of $\tilde{\phi}$ as our evaluation map 
\[{\rm ev}:=(\tilde{\phi})^\sharp: \Sigma \mathcal{F}(X) \rightarrow J(\Sigma X).\]
According to the construction, we see that $\tilde{\phi}$ preserves the natural filtrations and gives the following commutative diagram:
\begin{equation*}
  \xymatrix{
     \Sigma \mathcal{F}_{k-1}(X) \ar[rr]^{{\rm ev}_{k-1}} \ar@{_{(}->}[d] &&
      J_{k-1}(\Sigma X)  \ar@{_{(}->}[d]\\
    \Sigma \mathcal{F}_{k}(X) \ar[rr]^{{\rm ev}_{k}} \ar@{->>}[d] &&
      J_{k}(\Sigma X)  \ar@{->>}[d]\\
     \Sigma D_k(X) \ar[rr]^{\bar{{\rm ev}}_{k}} &&
     (\Sigma X) ^{\wedge k},
  }
\end{equation*}
where $D_k(X)=\mathcal{F}(k)^{+}\wedge_{\Sigma_k} X^{\wedge k}$ is the $k$-th divided power. Further, since the map $r_k$ sends any point on the boundary of $\mathcal{F}(k)$ to a path on the boundary of $I^k$, $\tilde{\phi}_n$ sends any point $[a, x]\in (\partial\mathcal{F}(n)\times X^{\times n}/\sim) \subset \mathcal{F}_{n}(X)$ to a loop in $\Omega J_{k-1}(\Sigma X)\subset \Omega J_{k}(\Sigma X)$. Hence, $\bar{{\rm ev}}_{k}$ can be factored as 
\begin{equation*}
\Sigma D_k(X)=\Sigma \mathcal{F}(k)^{+}\wedge_{\Sigma_k} X^{\wedge k} \stackrel{\Sigma q\wedge {\rm id}}{\longrightarrow} \Sigma \bigvee_{k!} S^{k-1} \wedge_{\Sigma_k} X^{\wedge k}\simeq  (\Sigma X) ^{\wedge k}\stackrel{e}{\longrightarrow}  (\Sigma X) ^{\wedge k},
\end{equation*}
where $q$ is the quotient map given by shrinking the $(k-2)$-skeleton of $\mathcal{F}(k)$ to a point. Also, since $(r_n)^{\sharp}$ is of degree plus or minus one and $\bar{{\rm ev}}_{k}$ is functorial in $X$, the map $e$ is of the form $i\times {\rm id}: S^k\wedge X^{\wedge n} \rightarrow  S^k\wedge X^{\wedge n}$ with ${\rm degree}~(i)=\pm 1$. In particular, $e$ is a homotopy equivalence, and we may choose the natural quotient $\tilde{q}_k=\Sigma q\wedge {\rm id}$ as the induced evaluation map $\bar{{\rm ev}}_{k}$.

\subsection{Shuffle map}
The shuffle map was defined in \cite{Wu} to construct shuffle relations in Cohen groups. Here, we recall the definition of the shuffle map and then give a combinatorial description using Milgram's model. 
\subsubsection{~} A convenient way to define the shuffle map is by working at the algebraic level first. Let $T(V)$ be the tensor coalgebra with $V$ the primitive base ($V$ will be realized as the homology of a co-$H$-space later), where the reduced coalgebra morphism is denoted by $\bar{\psi}: T(V)\rightarrow T(V)\otimes T(V)\twoheadrightarrow T(V)\wedge T(V)$. The morphisms $\bar{\psi}$ preserves the usual word length filtration. Let us denote 

\begin{equation*}
J_n(V)=\bigoplus_{j=0}^{n}T_j(V), \ \ T_n(V)=V^{\otimes n}.
\end{equation*}
Then we have a commutative diagram 
\begin{equation}\label{algsh}
  \xymatrix{
  J_{k-1}(V)\ar@{^{(}->}[r]  \ar[d]^{\bar{\psi}|_{ J_{k-1}}}&
  J_k (V)\ar@{->>}[r]  \ar[d]^{\bar{\psi}|_{ J_{k}}}&
  V^{\otimes k}\ar[d]^{{\rm sh}}\\
  {\rm Fil}_{k-1}(T(V)\wedge T(V))  \ar@{^{(}->}[r] &
  {\rm Fil}_{k}(T(V)\wedge T(V))  \ar@{->>}[r]&
  \bigoplus_{i,j >0,i+j=k} V^{\otimes i} \otimes V^{\otimes j}. 
  }
\end{equation}
The shuffle map can be defined by realizing the above diagram. Explicitly, suppose $Y$ is a co-$H$-space, then by definition we have a homotopy commutative diagram
\begin{equation*}
 \xymatrix{
 Y \ar[rr]^{\Delta} \ar[dr]_{\mu^\prime} &&
 Y\times Y\\
 & Y\vee Y\ar@{^{(}->}[ur]&,
 }
\end{equation*}
where $\mu^\prime$ is the comultiplication and $\Delta$ is the diagonal map. There is a homotopy commutative diagram 
\begin{equation*}
\begin{tikzcd}[column sep=0.17in, row sep=0.17in]
& 
(Y\vee Y)^{\times n}\ar[rrrr, dashrightarrow, "h"]\ar[ddd, twoheadrightarrow] \ar[dr, hookrightarrow]&&&&
\bigcup_n Y^{\times i}\wedge Y^{\times j}\ar[ddd, twoheadrightarrow]\ar[dl, hookrightarrow]\\
Y^{\times n} \ar[rr, crossing over, "\Delta^{\times n}~"] \ar[ru, "\mu^{\prime\times n}"] \ar[ddd, twoheadrightarrow] \ar[rrrr, crossing over, bend right=20, "\bar{\Delta}"] &&
(Y\times Y)^{\times n}\ar[rr, "\lambda"] \ar[ddd, crossing over, twoheadrightarrow]  &&
Y^{\times n}\wedge Y^{\times n}\ar[ddd, twoheadrightarrow]&\\
&&&&&\\
& 
J_n(Y\vee Y) \ar[rrrr, dashrightarrow, "\tilde{h}"] \ar[dr, hookrightarrow]&&&&
\bigcup_n J_i(Y)\wedge J_j(Y)\ar[dl, hookrightarrow]\\
J_n(Y)\ar[rr, "J_n(\Delta)"]  \ar[ru, "J_n(\mu^{\prime})"]  \ar[rrrr, bend right=20, "\bar{\Delta}"]&&
J_n(Y\times Y)\ar[rr, "\tilde{\lambda}"]&&
J_n(Y)\wedge J_n(Y)&,
\end{tikzcd}
\end{equation*}
where $\lambda ((y_1,y_1^\prime),\ldots, (y_n,y_n^\prime))=(y_1\wedge \ldots \wedge y_n)\wedge (y_1^\prime\wedge \ldots \wedge y_n^\prime)$, 
\begin{equation*}
\bigcup_n Y^{\times i}\wedge Y^{\times j}=\bigcup_{\substack{\{a_1,\ldots a_i, b_1, \ldots, b_j\}=\{1,\ldots, n\} \\i, j>0, ~a_1<\cdots< a_i, b_1<\cdots <b_j\\  }} (Y_{a_1}\times Y_{a_2}\times \cdots \times Y_{a_i})\wedge (Y_{b_1}\times Y_{b_2}\times \cdots \times Y_{a_j}),
\end{equation*}
$Y_{a_s}=Y$ presents the $a_s$-th $Y$ in the first $Y^{\times n}$ of $Y^{\times n}\wedge Y^{\times n}$ and $Y_{b_t}=Y$ presents the $b_t$-th $Y$ in the second $Y^{\times n}$. 
All the maps in the diagram are natural, and the composition map 
\begin{equation*}
\tilde{\Delta}: J_n(Y)\rightarrow J_n(Y\vee Y) \rightarrow \bigcup_n J_i(Y)\wedge J_j(Y)\end{equation*}
can be viewed as the reduced diagonal map. We then have a commutative diagram 
\begin{equation*}
  \xymatrix{
  J_{k-1}(Y)\ar@{^{(}->}[r]  \ar[d]^{\tilde{\Delta}}&
  J_k (Y)\ar@{->>}[r]  \ar[d]^{\tilde{\Delta}}&
 Y^{\wedge k}\ar[d]^{{\rm sh}}\\
  \bigcup_{k-1} J_i(Y)\wedge J_j(Y)  \ar@{^{(}->}[r] &
 \bigcup_{k} J_i(Y)\wedge J_j(Y) \ar@{->>}[r]&
  \bigvee_{i,j >0,i+j=k} Y^{\wedge i} \wedge Y^{\wedge j},
  }
\end{equation*}
which exactly realizes Diagram \ref{algsh}.
By straightforward computations, we may write the formula of the shuffle map as 
\begin{equation*}
{\rm sh}~(y_1\wedge \cdots \wedge y_k)=\sum_{\substack{i, j>0\\i+j=k}}\sum_{\substack{\{a_1,\ldots a_i, b_1, \ldots, b_j\}=\{1,\ldots, n\} \\ a_1<\cdots< a_i, b_1<\cdots <b_j\\  }}y_{a_1}\wedge \cdots \wedge y_{a_i}\wedge y_{b_1}\wedge \cdots \wedge y_{b_j},
\end{equation*}
which means the $(i, j)$-component of image of $y_1\wedge \cdots \wedge y_k$ in $Y^{\wedge i} \wedge Y^{\wedge j}$ under ${\rm sh}$ is 
\begin{equation*}
{\rm sh}_{i,j}(y_1\wedge \cdots \wedge y_k)=\sum_{\substack{\{a_1,\ldots a_i, b_1, \ldots, b_j\}=\{1,\ldots, n\} \\ a_1<\cdots< a_i, b_1<\cdots <b_j\\  }}y_{a_1}\wedge \cdots \wedge y_{a_i}\wedge y_{b_1}\wedge \cdots \wedge y_{b_j}.
\end{equation*}
Since the loop of the reduced diagonal is null homotopic, we have the following lemma which was used to detect the shuffle relations in Cohen groups \cite{Wu}: 
\begin{lemma}\label{oldnull}
The loop of the composite
\begin{equation*} 
J_k(Y)\twoheadrightarrow Y^{\wedge k} \stackrel{{\rm sh}}{\longrightarrow}\bigvee_{i,j >0,i+j=k} Y^{\wedge i} \wedge Y^{\wedge j} \stackrel{g}{\dashrightarrow} \Omega Z
\end{equation*}
is null-homotopic for any map $g$. 
\end{lemma}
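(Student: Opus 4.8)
The plan is to use the commutative square that realizes \eqref{algsh} to factor the composite in the statement through the reduced diagonal $\tilde{\Delta}$, and then to combine the functoriality of $\Omega$ with the nullity of the looped reduced diagonal. The factorization is formal from the diagram already in hand; the only genuine content is that looping kills a reduced diagonal.

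First I would extract from the right-hand square of the diagram realizing \eqref{algsh} the homotopy identity $\mathrm{sh}\circ q\simeq \pi\circ\tilde{\Delta}$, where $q\colon J_k(Y)\twoheadrightarrow Y^{\wedge k}$ is the pinch onto the top filtration quotient and $\pi\colon \bigcup_k J_i(Y)\wedge J_j(Y)\twoheadrightarrow \bigvee_{i+j=k}Y^{\wedge i}\wedge Y^{\wedge j}$ is the corresponding projection on the bottom row. Setting $\bar{g}=g\circ\pi$, the composite of the statement becomes
\[
g\circ\mathrm{sh}\circ q\ \simeq\ \bar{g}\circ\tilde{\Delta}\colon\quad J_k(Y)\longrightarrow \Omega Z,
\]
so that, for an \emph{arbitrary} $g$, it factors through the reduced diagonal $\tilde{\Delta}$.

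Next I would loop this factorization. Since $\Omega$ is a functor, $\Omega(\bar{g}\circ\tilde{\Delta})\simeq \Omega\bar{g}\circ\Omega\tilde{\Delta}$, so it suffices to show $\Omega\tilde{\Delta}\simeq\ast$. This is the single input already isolated in the discussion preceding the statement: the loop of a reduced diagonal is null-homotopic. In the model case of a reduced diagonal $\bar{\Delta}\colon W\to W\wedge W$ this is the classical $H$-map argument: $\Omega\bar{\Delta}$ is the looped honest diagonal $\gamma\mapsto(\gamma,\gamma)$ followed by the projection $\Omega(W\times W)\to\Omega(W\wedge W)$; in the loop-space $H$-structure $(\gamma,\gamma)$ splits as $(\gamma,e)\cdot(e,\gamma)$, with $e$ the constant loop, each factor lying in the image of a wedge inclusion $\Omega(i_\ell)$, and $i_\ell$ is collapsed to the basepoint by $W\times W\to W\wedge W$; since $\Omega$ of any map is an $H$-map for loop concatenation, both factors vanish and $\Omega\bar{\Delta}\simeq\ast$. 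For our $\tilde{\Delta}$ the same argument applies, the two complementary wedge inclusions now being furnished by the co-$H$ comultiplication $\mu'\colon Y\to Y\vee Y$ used to define $\tilde{\Delta}$. Combining, $\Omega(g\circ\mathrm{sh}\circ q)\simeq\Omega\bar{g}\circ\Omega\tilde{\Delta}\simeq\ast$, which is the assertion.

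The main obstacle is making this $H$-map splitting precise for the \emph{filtered} smash target $\bigcup_k J_i(Y)\wedge J_j(Y)$ rather than for a single smash $W\wedge W$: one must check, filtration by filtration, that each summand $Y^{\wedge i}\wedge Y^{\wedge j}$ is reached only through maps factoring across $Y\vee Y$ (via $\mu'$), so that the collapse onto the smash summands annihilates both loop-product factors uniformly in the filtration. Granting this — equivalently, granting that $\tilde{\Delta}$ genuinely is a reduced diagonal, as asserted after the large homotopy-commutative diagram — the factorization $g\circ\mathrm{sh}\circ q\simeq\bar{g}\circ\tilde{\Delta}$ makes the conclusion immediate.
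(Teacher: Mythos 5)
Your argument is correct and is essentially the paper's own: the paper likewise reads off ${\rm sh}\circ q\simeq \pi\circ\tilde{\Delta}$ from the right-hand square of the diagram realizing the algebraic shuffle diagram, so that the composite factors through the reduced diagonal $\tilde{\Delta}\colon J_k(Y)\to\bigcup_k J_i(Y)\wedge J_j(Y)$, and then invokes the classical fact that the loop of a reduced diagonal is null-homotopic. The only difference is one of detail: the paper leaves both the $H$-map splitting and the filtered-target subtlety you flag entirely implicit (deferring to Wu's memoir), whereas you spell them out and honestly mark the remaining verification.
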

\subsubsection{~} Now we will use Milgram's model to get a combinatorial description of the shuffle map. For the polyhedra $\mathcal{F}(k)$, we have the naive cofibration sequence 
\begin{eqnarray*}
\bigvee_{k!}S^{k-2}\stackrel{\beta}{\longrightarrow}{\rm sk}_{k-2} \mathcal{F}(k)/ {\rm sk}_{k-3} \mathcal{F}(k)& \hookrightarrow& \mathcal{F}(k)/ {\rm sk}_{k-3} \mathcal{F}(k) \twoheadrightarrow  \\
\mathcal{F}(k)/ {\rm sk}_{k-2} \mathcal{F}(k)\simeq \bigvee_{k!}S^{k-1} &\stackrel{\Sigma\beta}{\longrightarrow}&\Sigma {\rm sk}_{k-2}\mathcal{F}(k)/ {\rm sk}_{k-3} \mathcal{F}(k),  
\end{eqnarray*}
where $\beta$ is the attaching map. By applying $(-)^{+}\wedge_{\Sigma_k} X^{\wedge k}$ to the above sequence, we have a cofibration 
\begin{eqnarray*}
\Sigma^{k-2}X^{\wedge k}\stackrel{\beta^{+}\wedge_{\Sigma_k} X^{\wedge k}}{\longrightarrow}{\rm sk}_{k-2} D_k(X)/ {\rm sk}_{k-3} D_k(X)& \hookrightarrow& D_k(X)/ {\rm sk}_{k-3} D_k(X) \twoheadrightarrow  \\
D_k(X)/ {\rm sk}_{k-2} D_k(X)\simeq \Sigma^{k-1}X^{\wedge k} &\stackrel{\Sigma\tilde{\beta}}{\longrightarrow}&\Sigma{\rm sk}_{k-2} D_k(X)/ {\rm sk}_{k-3} D_k(X),  
\end{eqnarray*}
where ${\rm sk}_{i} D_k(X)={\rm sk}_{i}\mathcal{F}(k)^{+}\wedge_{\Sigma_k} X^{\wedge k}$ is the natural filtration, and 
\[\Sigma{\rm sk}_{k-2} D_k(X)/ {\rm sk}_{k-3} D_k(X) \simeq \Sigma \bigvee_{(k-1)\cdot k!} S^{k-2}\wedge_{\Sigma_k} X^{\wedge k}\simeq \bigvee_{k-1} \Sigma^{k-1}X^{\wedge k}.\]
\begin{lemma}\label{combish}
There is a homotopy commutative diagram 
\[
    \xymatrix{  
    (\Sigma X)^{\wedge k} \ar[rr]^{{\rm sh} \ \ \ \ \ \ \ \ \ \ \ } \ar[d]^{\simeq} &&
    \bigvee_{i,j>0, i+j=k} (\Sigma X)^{\wedge i}\wedge (\Sigma X)^{\wedge j} \ar[d]^{\simeq}\\
    \Sigma \bigvee_{k!} S^{k-1}\wedge_{\Sigma_k} X^{\wedge k} \ar[rr]^{\Sigma^2\tilde{\beta} \ \ \ \ \ }&&
    \Sigma^2 \bigvee_{(k-1)\cdot k!} S^{k-2}\wedge_{\Sigma_k} X^{\wedge k}.
  }
\]
\end{lemma}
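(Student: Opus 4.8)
The plan is to reduce the homotopy-commutativity of the square to an identity of matrices in reduced integral homology, and then to identify both matrices with the incidence data of the permutohedral complex $\mathcal{F}(k)$. After the indicated equivalences every corner of the square is a multiple suspension of a wedge of copies of $X^{\wedge k}$: the source $(\Sigma X)^{\wedge k}\cong\Sigma^k X^{\wedge k}$, each target summand $(\Sigma X)^{\wedge i}\wedge(\Sigma X)^{\wedge j}\cong\Sigma^k X^{\wedge k}$, and the bottom map is literally a double suspension $\Sigma^2\tilde\beta$. Both the shuffle map and $\Sigma^2\tilde\beta$ are natural in $X$ and are assembled only from suspension coordinates, permutations of smash factors, and the co-$H$ additive structure; hence a homotopy between the two composites is detected by their effect on $\tilde H_\ast(-;\mathbb{Z})$, for instance by specialising to $X=S^m$ with $m$ large, where both composites become maps $S^{mk+k}\to\bigvee_{k-1}S^{mk+k}$ between wedges of spheres of a common dimension and are therefore determined by their degree matrices. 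So it suffices to compute both composites on homology.

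First I would read off the two vertical equivalences on homology. They arise by collapsing lower skeleta in the skeletal cofibre sequence, so on $\tilde H_\ast$ the left equivalence identifies the source with the top cellular chains $C_{k-1}(\mathcal{F}(k))\otimes\tilde H_\ast(X^{\wedge k})$, a single free $\Sigma_k$-orbit on the $k!$ maximal cells, i.e.\ the orderings of $[k]$; the right equivalence identifies the target with the codimension-one chains $C_{k-2}(\mathcal{F}(k))\otimes\tilde H_\ast(X^{\wedge k})$, which split into $k-1$ free $\Sigma_k$-orbits indexed by the block-size pair $(i,j)$ with $i+j=k$, matching the $k-1$ wedge summands $(\Sigma X)^{\wedge i}\wedge(\Sigma X)^{\wedge j}$. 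Under these identifications the bottom map $\Sigma^2\tilde\beta$ induces precisely the cellular boundary $\partial_{k-1}\colon C_{k-1}\to C_{k-2}$, since the attaching map $\beta$ followed by collapse to ${\rm sk}_{k-2}\mathcal{F}(k)/{\rm sk}_{k-3}\mathcal{F}(k)$ realises the cellular differential, and applying $(-)^{+}\wedge_{\Sigma_k}X^{\wedge k}$ together with double suspension preserves this.

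Next I would compare this with the shuffle map. By its construction as the topological realisation of the reduced shuffle coproduct (Diagram~\ref{algsh}), ${\rm sh}$ induces on homology the map sending a maximal generator $\sigma$ to the signed sum of the two-block partitions $s_1|s_2$ for which $\sigma$ is a shuffle of $s_1$ and $s_2$. On the geometric side the incidence coefficient $[\sigma:s_1|s_2]$ of the permutohedron is non-zero exactly when $s_1|s_2\prec\sigma$ in $\mathcal{L}(k)$, and by the description of $\prec$ in Section~\ref{Permuto} this happens precisely when $\sigma$ is a shuffle of $s_1$ and $s_2$. Thus the supports of the two matrices coincide term by term, and it remains only to match the $\pm1$ coefficients.

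The sign bookkeeping is the main obstacle. Each non-zero incidence number is a unit because the adjoint $(r_n)^{\sharp}$ is cellular of degree $\pm1$ (Lemma~$4.6$ of \cite{Milgram}), so the geometric boundary assigns each admissible face a coefficient $\pm1$; what must be verified is that this sign agrees with the Koszul sign of the corresponding unshuffle permutation in the algebraic coproduct. I would fix coherent orientations of the permutohedral faces from the anti-lexicographic ordering used in the proof of Theorem~\ref{Milgram}, compute the boundary of a single top cell directly, and then propagate the comparison by induction on $k$ through the boundary decomposition $P_k\supset P_i\times P_j$ that defines $r_n'$; an adjacent transposition alters both signs in the same way, so the whole pattern is pinned down by one base case. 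Any residual global sign is absorbed into the degree-$\pm1$ equivalences $e$ and is harmless for a homotopy-commutativity statement. Once the two homology matrices are shown to be equal, the reduction of the first paragraph shows the two composites are homotopic, which proves the lemma.
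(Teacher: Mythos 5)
Your route is genuinely different from the paper's: you reduce everything to an identity of matrices on cellular chains, whereas the paper argues at the space level, using the free $\Sigma_k$-action to restrict the attaching map to the single top cell labelled by the identity, asserting that its projection onto each incident codimension-one cell $S^{k-2}_{(\tau,j)}\wedge X^{\wedge k}$ is homotopic to the identity, and then identifying the passage to the $\Sigma_k$-orbit space with the folding map $\nabla\circ\bigvee_{(\tau,j)}\tau^{\ast}$, which assembles exactly the sum over $(j,k-j)$-shuffles defining ${\rm sh}_{j,k-j}$. Your identification of $\Sigma^2\tilde{\beta}$ with the cellular boundary, and of the supports of the two matrices via the order $\prec$ on $\mathcal{L}(k)$ (a codimension-one face $s_1|s_2$ is incident to the top cell $\sigma$ iff $\sigma$ is a shuffle of $s_1$ and $s_2$), is the same combinatorial input the paper uses, just packaged homologically.

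There is, however, a genuine gap in your first reduction. On each wedge summand of the target both composites lie in the image of $\mathbb{Z}[\Sigma_k]\to[(\Sigma X)^{\wedge k},(\Sigma X)^{\wedge k}]$, i.e.\ they are integer combinations of permutations of the smash factors, and such a class is \emph{not} detected by specialising to $X=S^m$: a permutation $\tau$ acts on $(\Sigma S^m)^{\wedge k}=S^{(m+1)k}$ with degree $\mathrm{sign}(\tau)^{m+1}$, so the degree of $\sum_\tau\epsilon_\tau\tau$ is $\sum_\tau\epsilon_\tau\,\mathrm{sign}(\tau)^{m+1}$, and the two values obtained for $m$ even and $m$ odd give only two linear constraints on the coefficients $\epsilon_\tau$ — nowhere near enough once a shuffle set has more than two elements (already for $(i,j)=(1,k-1)$ there are $k$ of them). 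To make your reduction work you must evaluate on $X$ a wedge of spheres of distinct dimensions and read off the multilinear part of the induced map on homology, which does recover the full $\mathbb{Z}[\Sigma_k]$-coefficient vector; with that repair the strategy is sound. Note also that the per-face sign verification (that every nonzero incidence number is $+1$ under coherent orientations, matching the unsigned shuffle sum, with no residual sign that could merely be absorbed per wedge summand) is precisely the crux of the lemma and remains a sketch in your write-up; the paper's corresponding step is its assertion that $p_{(\tau,j)}\circ\beta'$ is homotopic to the identity, so neither argument escapes this computation.
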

\begin{proof}
In order to prove the lemma, we need to label the spheres involved by their corresponding elements in the poset of $\mathcal{F}(k)$. Hence, we may write 
\[\mathcal{F}(k)/ {\rm sk}_{k-2} \mathcal{F}(k)\simeq \bigvee_{\sigma \in \Sigma_k}S^{k-1}_{(\sigma)}=S^{k-1},\]
\[{\rm sk}_{k-2}\mathcal{F}(k)/ {\rm sk}_{k-3} \mathcal{F}(k) \simeq \bigvee_{(\tau, j)}S^{k-2}_{(\tau, j)}=S^{k-2},\]
where $(\tau, j)\in \mathcal{L}^{k-2}(k)$ (the set of elements of degree $k-2$ in $\mathcal{L}(k)$) corresponds to 
\[
  \xymatrix{
 \textbf{k} \ar[r]^{\tau} \ar@{->>}[d]^{\pi_j} &\textbf{k}\\
 \mathbf{\ubar{2}}
  }
\]
such that $\pi_j (i)=1$ for $i\leq j$ and $\pi_j (i)=2$ for $i> j$.
Since $\gamma^\ast(a=(\tilde{a}, \pi_a))= (\gamma^{-1}\circ \tilde{a},\pi_a)$ for any $\gamma \in \Sigma_k$ and $a\in \mathcal{L}(k)$, then in our two wedge of spheres
\[(\sigma^{-1})^\ast: S^{k-1}_{({\rm id})} \leadsto S^{k-1}_{(\sigma)},  \  \ 
 S^{k-2}_{({\rm id}, j)} \leadsto S^{k-2}_{(\sigma, j)}. \]
 On the other hand, we have a homotopy commutative diagram 
\[
 \xymatrix{
\Sigma S^{k-1}_{({\rm id})}\wedge X^{\wedge k}   
        \ar@{_{(}->}[d] \ar[rr]^{\beta^\prime=\Sigma^2\beta\wedge X^{\wedge k}_{\vert}} \ar@/_5pc/[dd]_{{\rm id}} &&
\Sigma^2 \bigvee_{(\tau, j)\in \mathcal{L}_{{\rm id}}^{k-2}(k)}S^{k-2}_{(\tau, j)} \wedge X^{\wedge k} 
        \ar@{_{(}->}[d]  \ar@/^6pc/[dd]^{\theta} \\
 \Sigma \bigvee_{\sigma \in \Sigma_k}S^{k-1}_{(\sigma)} \wedge X^{\wedge k} \ar@{->>}[d] \ar[rr]^{\Sigma^2\beta\wedge X^{\wedge k} } &&
\Sigma^2 \bigvee_{(\tau, j)\in \mathcal{L}^{k-2}(k)}S^{k-2}_{(\tau, j)}\wedge X^{\wedge k} \ar@{->>}[d]\\
\Sigma \bigvee_{\sigma \in \Sigma_k}S^{k-1}_{(\sigma)} \wedge_{\Sigma_k} X^{\wedge k}  \ar[rr]^{\Sigma^2\tilde{\beta}}&&
\Sigma^2 \bigvee_{(\tau, j)\in \mathcal{L}^{k-2}(k)}S^{k-2}_{(\tau, j)}\wedge_{\Sigma_k} X^{\wedge k}, 
 }
\]
where the composition 
\[\Sigma S^{k-1}_{({\rm id})}\wedge X^{\wedge k} \stackrel{\beta^\prime}{\rightarrow}
\Sigma^2 \bigvee_{(\tau, j)}S^{k-2}_{(\tau, j)} \wedge X^{\wedge k} \stackrel{p_{(\tau, j)}}{\rightarrow}S^{k-2}_{(\tau, j)} \wedge X^{\wedge k}\]
is homotopic to the identity for any projection $p_{(\tau, j)}$ of the ${(\tau, j)}$-component. 
The composition 
\[\Sigma^2 \bigvee_{j}S^{k-2}_{({\rm id}, j)}\wedge X^{\wedge k}\hookrightarrow \Sigma^2 \bigvee_{(\tau, j)\in \mathcal{L}^{k-2}(k)}S^{k-2}_{(\tau, j)}\wedge X^{\wedge k}\twoheadrightarrow\Sigma^2 \bigvee_{(\tau, j)\in \mathcal{L}^{k-2}(k)}S^{k-2}_{(\tau, j)}\wedge_{\Sigma_k} X^{\wedge k}\]
is also homotopic to the identity, and then the homotopy inverse of this map with the pre-composition of $\theta$ defines a map 
\[\tilde{\theta}:\Sigma^2 \bigvee_{(\tau, j)\in \mathcal{L}_{{\rm id}}^{k-2}(k)}S^{k-2}_{(\tau, j)} \wedge X^{\wedge k} \rightarrow  \Sigma^2 \bigvee_{j}S^{k-2}_{({\rm id}, j)}\wedge X^{\wedge k},\]
which essentially is the map 
\[\bigvee_{1\leq j\leq k-1}\big(\nabla\circ \bigvee_{(\tau, j)}\tau^{\ast}\big):\Sigma^2 \bigvee_{1\leq j\leq k-1}\bigvee_{(\tau, j)\in{\rm sh}(j, k-j)} S^{k-2}_{(\tau, j)} \wedge X^{\wedge k} \rightarrow  \Sigma^2 \bigvee_{1\leq j\leq k-1}S^{k-2}_{({\rm id}, j)}\wedge X^{\wedge k}\]
such that \[\tau^\ast(t_1\wedge \ldots\wedge t_k\wedge  x_1\wedge \ldots\wedge x_k)= (t_{\tau(1)}\wedge \ldots\wedge t_{\tau(k)}\wedge  x_{\tau(1)}\wedge \ldots\wedge x_{\tau(k)}),\]
and $\nabla$ is the folding map.
Hence under the equivalence $\Sigma^2 \bigvee_{j}S^{k-2}_{({\rm id}, j)}\wedge X^{\wedge k}\simeq \bigvee_j (\Sigma X)^{\wedge k}$ and other similar ones,
\begin{eqnarray*}
\tilde{\theta} \circ \beta^\prime 
&\simeq&
\big(\bigvee_{1\leq j\leq k-1}(\nabla\circ \bigvee_{(\tau, j)}\tau^{\ast})\big)\circ \beta^\prime \\
&\simeq&
\big(\bigvee_{1\leq j\leq k-1}(\nabla\circ \bigvee_{(\tau, j)}\tau^{\ast})\big)\circ (\bigvee_{1\leq j\leq k-1}u^\prime)\circ u^\prime\\
&\simeq& 
\bigvee_{1\leq j\leq k-1} \big(\nabla\circ(\bigvee_{(\tau, j)}\tau^{\ast})\circ u^\prime\big)\circ u^\prime  \\
&\simeq&
 \bigvee_{1\leq j\leq k-1}{\rm sh}_{j, k-j}\circ u^\prime\\
&\simeq&
{\rm sh},
\end{eqnarray*}
where $u^\prime$'s are the appropriate iterations of the co-multiplication of $(\Sigma X)^{\wedge k}$.
\end{proof}
According to this lemma, we may also denote the double suspended attaching map $\Sigma^2\tilde{\beta}$ by ${\rm sh}$. Combining the description of the evaluation map in Section \ref{evaluation} we have the following corollary:
\begin{corollary}[cf. Lemma \ref{oldnull}]\label{newnull0}
\[{\rm sh}\circ\bar{{\rm ev}}_k: \Sigma D_k(X)\twoheadrightarrow (\Sigma X)^{\wedge k}
\longrightarrow \bigvee_{k-1}(\Sigma X)^{\wedge k}\]
is null homotopic.
\end{corollary}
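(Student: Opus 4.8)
The plan is to recognize ${\rm sh}\circ\bar{{\rm ev}}_k$ as (a suspension of) two consecutive maps in a single Puppe cofiber sequence, where consecutive composites are automatically null homotopic. Both ingredients are already available. By the description of the evaluation map in Section \ref{evaluation}, $\bar{{\rm ev}}_k=\Sigma q\wedge{\rm id}$ is the quotient collapsing the $(k-2)$-skeleton, so it is the suspension of the cofiber projection of the skeletal filtration of $D_k(X)$. By Lemma \ref{combish}, ${\rm sh}$ is the double suspended attaching map $\Sigma^{2}\tilde{\beta}$, i.e. a suspension of the connecting map of the cofibration built from that same filtration. Once both maps are placed on the same filtration, their composite is forced to vanish.

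Concretely, I would work with the cofibration (after applying $(-)^{+}\wedge_{\Sigma_k}X^{\wedge k}$) already recorded in the excerpt,
\[
{\rm sk}_{k-2}D_k(X)/{\rm sk}_{k-3}D_k(X)\hookrightarrow D_k(X)/{\rm sk}_{k-3}D_k(X)\xrightarrow{\ p\ }D_k(X)/{\rm sk}_{k-2}D_k(X)\xrightarrow{\ \Sigma\tilde{\beta}\ }\Sigma\big({\rm sk}_{k-2}D_k(X)/{\rm sk}_{k-3}D_k(X)\big),
\]
where $D_k(X)/{\rm sk}_{k-2}D_k(X)\simeq\Sigma^{k-1}X^{\wedge k}$, so that its suspension is $(\Sigma X)^{\wedge k}$. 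Let $\kappa=q\wedge{\rm id}\colon D_k(X)\to D_k(X)/{\rm sk}_{k-2}D_k(X)$ be the collapse underlying $\bar{{\rm ev}}_k=\Sigma\kappa$; it factors as $D_k(X)\xrightarrow{p'}D_k(X)/{\rm sk}_{k-3}D_k(X)\xrightarrow{p}D_k(X)/{\rm sk}_{k-2}D_k(X)$, so $\bar{{\rm ev}}_k$ factors through $\Sigma p$, while ${\rm sh}=\Sigma^{2}\tilde{\beta}=\Sigma(\Sigma\tilde{\beta})$ is the suspension of the connecting map. Hence
\[
{\rm sh}\circ\bar{{\rm ev}}_k\ \simeq\ \Sigma\big((\Sigma\tilde{\beta})\circ p\big)\circ\Sigma p',
\]
and $(\Sigma\tilde{\beta})\circ p$ is the composite of the cofiber projection $p$ with the connecting map of the same cofibration, hence null homotopic by the Puppe sequence. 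Since suspending and pre-composing preserve nullity, ${\rm sh}\circ\bar{{\rm ev}}_k\simeq\ast$. This is the unstable, combinatorial refinement of Lemma \ref{oldnull}: the null-homotopy is now visible directly from the cell structure of the permutohedron, before looping, rather than only after applying $\Omega$ and invoking the vanishing of the loop of a reduced diagonal.

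The step that needs the most care is checking that the identification of $\bar{{\rm ev}}_k$ with $\Sigma\kappa$ and the identification of ${\rm sh}$ with $\Sigma^{2}\tilde{\beta}$ refer to the \emph{same} homotopy equivalence $(\Sigma X)^{\wedge k}\simeq\Sigma\big(D_k(X)/{\rm sk}_{k-2}D_k(X)\big)$, so that $\bar{{\rm ev}}_k$ really is the cofiber projection of the very cofibration whose connecting map produces ${\rm sh}$. Here the equivalence $e$ of Section \ref{evaluation} (shown there to be of the form $i\wedge{\rm id}$ with $\deg i=\pm1$) and the $\Sigma_k$-equivariance of $\mathcal{F}(k)$ play a role; tracking the suspension coordinate and the $\Sigma_k$-action through $(-)^{+}\wedge_{\Sigma_k}X^{\wedge k}$ should reduce the claim to a routine but slightly tedious diagram chase, which nonetheless must be done with care so that no discrepancy of sign or of wedge component hides an obstruction.
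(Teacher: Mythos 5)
Your proof is correct and is essentially the argument the paper intends: the corollary is stated as an immediate consequence of Lemma \ref{combish} (identifying ${\rm sh}$ with $\Sigma^2\tilde{\beta}$) together with the identification $\bar{{\rm ev}}_k=\Sigma q\wedge{\rm id}$ from Section \ref{evaluation}, and the implicit reason is exactly your Puppe-sequence observation that the connecting map composed with the cofiber projection of the same skeletal cofibration is null. Your closing remark about checking that both identifications use the same equivalence $(\Sigma X)^{\wedge k}\simeq\Sigma\bigl(D_k(X)/{\rm sk}_{k-2}D_k(X)\bigr)$ is a reasonable point of care, and it is settled by the paper's explicit choice of $\tilde{q}_k=\Sigma q\wedge{\rm id}$ as $\bar{{\rm ev}}_k$ after absorbing the degree $\pm1$ equivalence $e$.
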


\subsection{The shuffle relations in Cohen groups}\label{Sh+Co}
In this subsection, we establish a canonical way to detect relations in Cohen groups using generalized normal bi-$\Delta$-extensions. We apply our method to derive shuffle relations as an example.
\subsubsection{Generalized normal bi-$\Delta$-extension}
Before discussing the shuffle relations, we need a more general concept of a normal bi-$\Delta$-extension (See Definition \ref{normalext0}).
\begin{definition}\label{normalext1}
Given a bi-$\Delta$-group $\mathcal{G}$ and a sequence of subgroups $\mathcal{H}=\{H_i~|~H_i\leq\mathcal{G}_i\}_{i\geq 0}$, the \textit{normal bi-$\Delta$-extension} of $\mathcal{H}$ in $\mathcal{G}$ is a bi-$\Delta$-group $\mathcal{N}_\mathcal{G}\mathcal{H}$ such that it is the minimal bi-$\Delta$-subgroup of $\mathcal{G}$ with the properties: 
\begin{itemize}
  \item[~] $\mathcal{N}_\mathcal{G}\mathcal{H}_n\unlhd \mathcal{G}_n$ and $H_n\leq\mathcal{N}_\mathcal{G}\mathcal{H}_n$ for each $n\geq 0$.
\end{itemize}
\end{definition}
The existence of such an extension can be confirmed by direct construction which can be obtained by induction. Indeed, for $H_0$ we have the usual normal bi-$\Delta$-extension $\mathcal{N}_\mathcal{G}\langle H_0\rangle_N$ which we may denote by $\mathcal{N}_\mathcal{G}^{(0)}\mathcal{H}$. Suppose we have constructed a sequence of bi-$\Delta$-groups 
\[\mathcal{N}_\mathcal{G}^{(0)}\mathcal{H}\subseteq\mathcal{N}_\mathcal{G}^{(1)}\mathcal{H}\subseteq\cdots \subseteq\mathcal{N}_\mathcal{G}^{(n-1)}\mathcal{H}\subseteq \mathcal{G}\]
such that $\mathcal{N}_\mathcal{G}^{(i)}\mathcal{H}$ contains $H_j$ for $j\leq i$ and the group in each stage is normal. Then we construct $\mathcal{N}_\mathcal{G}^{(n)}\mathcal{H}$ as follows. First, we may add $\langle H_n\rangle_N$ to the $n$-stage of $\mathcal{N}_\mathcal{G}^{(n-1)}\mathcal{H}$ and take group closure. Then add all the iterated face images of the elements of $H_n$ into the lower stages and take normal closures. We denote this new sequence of groups by $\mathcal{M}_\mathcal{G}^{(n)}\mathcal{H}$, and it is clearly that ${\rm Par}_n\mathcal{M}_\mathcal{G}^{(n)}\mathcal{H}$ is an $n$-partial $\Delta$-normal subgroup of $\mathcal{G}$. For any such partial $\Delta$-subgroup, we can make it into a partial bi-$\Delta$-subgroup by adding cofaces images from lower to higher stages. Further, by taking necessary normal closures, we may obtain an $n$-partial bi-$\Delta$-normal subgroup $\mathcal{P}_\mathcal{G}^{(n)}\mathcal{H}$ of $\mathcal{G}$. Then the usual normal bi-$\Delta$-extension $\mathcal{N}_\mathcal{G}\mathcal{P}_\mathcal{G}^{(n)}\mathcal{H}$ is the required $\mathcal{N}_\mathcal{G}^{(n)}\mathcal{H}$ and we complete the inductive step.
\begin{lemma}
The bi-$\Delta$-group $\mathcal{N}_\mathcal{G}\mathcal{H}=\bigcup_n\mathcal{N}_\mathcal{G}^{(n)}\mathcal{H}$ constructed above is the normal bi-$\Delta$-extension of $\mathcal{H}$ in $\mathcal{G}$.
\end{lemma}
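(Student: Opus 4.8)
The plan is to identify the constructed object $\mathcal{N}_\mathcal{G}\mathcal{H}=\bigcup_n\mathcal{N}_\mathcal{G}^{(n)}\mathcal{H}$ with the normal bi-$\Delta$-extension by verifying the two defining properties of Definition \ref{normalext1}: first, that $\mathcal{N}_\mathcal{G}\mathcal{H}$ is a bi-$\Delta$-subgroup of $\mathcal{G}$ whose $n$-th stage is normal in $\mathcal{G}_n$ and contains $H_n$; and second, that it is contained in every bi-$\Delta$-subgroup enjoying these same properties. Since a minimal subobject with a given closure property is unique when it exists, establishing both clauses identifies the construction with $\mathcal{N}_\mathcal{G}\mathcal{H}$.

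For the first clause, I would observe that each $\mathcal{N}_\mathcal{G}^{(n)}\mathcal{H}=\mathcal{N}_\mathcal{G}\mathcal{P}_\mathcal{G}^{(n)}\mathcal{H}$ is a genuine bi-$\Delta$-subgroup of $\mathcal{G}$ by the ordinary normal bi-$\Delta$-extension construction of Definition \ref{normalext0}, and that the $\mathcal{N}_\mathcal{G}^{(n)}\mathcal{H}$ form an increasing chain. Closure of the union under faces and cofaces is inherited termwise, and normality of each stage $(\mathcal{N}_\mathcal{G}\mathcal{H})_m=\bigcup_n(\mathcal{N}_\mathcal{G}^{(n)}\mathcal{H})_m$ follows because an increasing union of normal subgroups of $\mathcal{G}_m$ is again normal. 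The containment $H_n\leq(\mathcal{N}_\mathcal{G}\mathcal{H})_n$ is immediate, since the very first step in the inductive construction of $\mathcal{N}_\mathcal{G}^{(n)}\mathcal{H}$ inserts $\langle H_n\rangle_N$ into the $n$-th stage.

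For minimality, let $\mathcal{K}\leq\mathcal{G}$ be any bi-$\Delta$-subgroup with $\mathcal{K}_m\unlhd\mathcal{G}_m$ and $H_m\leq\mathcal{K}_m$ for all $m$. I would prove $\mathcal{N}_\mathcal{G}^{(n)}\mathcal{H}\subseteq\mathcal{K}$ by induction on $n$, so that passing to the union yields $\mathcal{N}_\mathcal{G}\mathcal{H}\subseteq\mathcal{K}$. The base case $n=0$ follows from $\langle H_0\rangle_N\leq\mathcal{K}_0$ together with the explicit stage-by-stage construction of the ordinary normal bi-$\Delta$-extension (normal closures of coface images), using that $\mathcal{K}$ is closed under cofaces and has normal stages. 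For the inductive step, assume $\mathcal{N}_\mathcal{G}^{(n-1)}\mathcal{H}\subseteq\mathcal{K}$. The partial object $\mathcal{P}_\mathcal{G}^{(n)}\mathcal{H}$ is generated within ${\rm Par}_n$ from $\mathcal{N}_\mathcal{G}^{(n-1)}\mathcal{H}$ and $H_n$ by repeatedly applying faces, cofaces, and normal closure in $\mathcal{G}$; since $\mathcal{K}$ already contains both, is closed under all structure maps, and has normal stages, every such generating operation keeps us inside $\mathcal{K}$, giving ${\rm Par}_n\mathcal{K}\supseteq\mathcal{P}_\mathcal{G}^{(n)}\mathcal{H}$. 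Invoking the explicit construction of the ordinary extension once more yields $\mathcal{N}_\mathcal{G}^{(n)}\mathcal{H}=\mathcal{N}_\mathcal{G}\mathcal{P}_\mathcal{G}^{(n)}\mathcal{H}\subseteq\mathcal{K}$.

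The main obstacle is the well-definedness and coherence of $\mathcal{P}_\mathcal{G}^{(n)}\mathcal{H}$: one must confirm that after alternately adjoining downward face images, upward coface images, and taking normal closures, the result is closed under \emph{every} structure map and is a bona fide $n$-partial bi-$\Delta$-normal subgroup, rather than merely a partial $\Delta$- or co-$\Delta$-object. The decisive check is that a face of a newly adjoined coface image lies back in the object, which is exactly the bi-$\Delta$ commutation relation $d_jd^i=d^{i-1}d_j$, $\mathrm{id}$, or $d^id_{j-1}$ according as $j<i$, $j=i$, or $j>i$; combined with the fact that faces are group homomorphisms (so that $d_j$ carries conjugates to conjugates and normal closures to normal closures), this shows the alternating enlargement stabilizes after finitely many rounds at a partial bi-$\Delta$-subgroup, justifying each application of Definition \ref{normalext0} above.
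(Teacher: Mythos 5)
Your proposal is correct and follows essentially the same route as the paper, which simply exhibits the inductive step of the construction in a diagram and declares minimality ``clear from the construction''; you have filled in the details (termwise closure of the union, the stage-by-stage induction for minimality, and the $d_jd^i$ commutation check for well-definedness of $\mathcal{P}_\mathcal{G}^{(n)}\mathcal{H}$) that the paper leaves to the reader.
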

\begin{proof}
The inductive step of the construction can be illustrated by the following diagram:
\[
\xymatrix{
  {\rm Par}_{n-1} \mathcal{N}_\mathcal{G}^{(n-1)}\mathcal{H}
  \ar@{^{(}->}[rr]^{\mathcal{N}_\mathcal{G}} 
  \ar@{_{(}->}[d] 
  &&
  \mathcal{N}_\mathcal{G}^{(n-1)}\mathcal{H}
  \ar@{_{(}->}[d]^{+(\langle H_n\rangle_N, d_j)}
 \\
  {\rm Par}_{n} \mathcal{M}_\mathcal{G}^{(n)}\mathcal{H}
 \ar@{^{(}->}[rr]
 \ar@{_{(}->}[d]^{+(d^i)}
 &&
 \mathcal{M}_\mathcal{G}^{(n)}\mathcal{H}
 \ar@{_{(}->}[d]
 \\
 \mathcal{P}_\mathcal{G}^{(n)}\mathcal{H}
 \ar@{^{(}->}[rr]^{\mathcal{N}_\mathcal{G}} 
 &&
 \mathcal{N}_\mathcal{G}^{(n)}\mathcal{H}.
}
\]
Hence, $\mathcal{N}_\mathcal{G}\mathcal{H}$ is a well defined bi-$\Delta$-normal subgroup of $\mathcal{G}$ containing each $H_n$ for $n\geq 0$. The minimality of $\mathcal{N}_\mathcal{G}\mathcal{H}$ is clear from the construction.
\end{proof}
Note that when $\mathcal{H}=\{H_i~|~H_i\unlhd\mathcal{G}_i\}_{0 \leq i\leq n}$ is an $n$-partial bi-$\Delta$-subgroup of $\mathcal{G}$, the two definitions of normal bi-$\Delta$-extensions give the same results. 

\subsubsection{~}\label{+Co} Now let us return to the homotopic context. 
For any $m\geq 1$, there is the subgroup ${\rm Sh}_{m}(\Sigma X; \Omega Z)$ of $\mathcal{Z}_m\Omega Z^{\ast}(\Sigma X)=[(\Sigma X)^{\wedge (m+1)}, \Omega Z]$ consisting of the homotopy classes of the maps of the form 
\[
(\Sigma X)^{\wedge (m+1)}\stackrel{{\rm sh}}{\rightarrow}\bigvee_{m}(\Sigma X)^{\wedge (m+1)}\stackrel{f}{\rightarrow}\Omega Z, \ \ \ \  {\rm for}~{\rm any}~ f: \bigvee_{m}(\Sigma X)^{\wedge (m+1)}\rightarrow \Omega Z.
\]
From Corollary \ref{newnull0}, we have that 
\[\bar{{\rm ev}}_{m+1}^\ast=0: {\rm Sh}^{N}_m(\Sigma X; \Omega Z) \longrightarrow \lbrack\Sigma D_{m+1}(X), \Omega Z\rbrack,\]
where ${\rm Sh}^{N}_m(\Sigma X; \Omega Z)$ is the normal closure of ${\rm Sh}_m(\Sigma X; \Omega Z)$ in $[(\Sigma X)^{\times (m+1)}, \Omega Z]$. (Note $[(\Sigma X)^{\times (m+1)}, \Omega Z]\cong [\Sigma (\Sigma X)^{\times (m+1)}, Z]$ may be non-abelian but admits another abelian group structure.)
Recall that we have the commutative diagram 
\[
  \xymatrix{
     \Sigma \mathcal{F}(X) \ar[rr]^{{\rm ev}}  &&
      J(\Sigma X) \\
    \Sigma \mathcal{F}_{m+1}(X) \ar[rr]^{{\rm ev}_{m+1}} \ar@{->>}[d]^{p}\ar@{^{(}->}[u] &&
      J_{m+1}(\Sigma X)  \ar@{->>}[d]^{p} \ar@{^{(}->}[u]\\
     \Sigma D_{m+1}(X) \ar[rr]^{\bar{{\rm ev}}_{m+1}} &&
     (\Sigma X) ^{\wedge (m+1)},
  }
\]
which implies that 
\[ {\rm ev}_{m+1}^\ast\circ p^\ast=0: {\rm Sh}_m^{N}(\Sigma X; \Omega Z) \longrightarrow \lbrack J_{m+1}(\Sigma X), \Omega Z\rbrack\longrightarrow \lbrack\Sigma \mathcal{F}_{m+1}(X), \Omega Z\rbrack.\]

Now suppose we have a representation $\phi: G\rightarrow \mathcal{Z}_n\Omega Z^{\ast}(\Sigma X)$. Then as before we have a bi-$\Delta$-extension $\phi: \Phi_n G\rightarrow \Omega Z^{\ast}(\Sigma X)$ and a group homomorphism of Cohen groups 
\[\mathfrak{h}\phi:  \mathfrak{h}\Phi_n G\rightarrow \mathfrak{h}\Omega Z^{\ast}(\Sigma X)\cong [J(\Sigma X), \Omega Z].\]
Meanwhile, we may define a sequence of normal subgroups ${\rm Sh}=\{{\rm Sh}_i\}_{i\geq 0}$ by the pullback diagram 
\[
\xymatrix{
{\rm Sh}_i \ar[rr] \ar@{_{(}->}[d] &&
{\rm Sh}_i^{N}(\Sigma X; \Omega Z) \ar@{_{(}->}[d] \\
\Phi_n G_i \ar[rr]^{\phi_i} &&
\Omega Z^{\ast}(\Sigma X)_i.
}
\]
Then apply the generalized normal bi-$\Delta$-extension, we have a commutative diagram of bi-$\Delta$-groups (${\rm Sh}^{N}(\Sigma X; \Omega Z)=\{{\rm Sh}^{N}_i(\Sigma X; \Omega Z) \}_{i\geq 0}$ and ${\rm Sh}^{N}_0(\Sigma X; \Omega Z)=\{0\}$)
\[
\xymatrix{
\mathcal{N}_{\Phi_n G}{\rm Sh} \ar[rr] \ar@{_{(}->}[d] &&
\mathcal{N}_{\Omega Z^{\ast}(\Sigma X)}{\rm Sh}^{N}(\Sigma X; \Omega Z)\ar@{_{(}->}[d] \\
\Phi_n G \ar[rr]^{\phi} &&
\Omega Z^{\ast}(\Sigma X),
}
\]
which implies a commutative diagram of Cohen groups 
\[
\xymatrix{
\mathfrak{h}\mathcal{N}_{\Phi_n G}{\rm Sh} \ar[rr] \ar@{_{(}->}[d] &&
\mathfrak{h}\mathcal{N}_{\Omega Z^{\ast}(\Sigma X)}{\rm Sh}^{N}(\Sigma X; \Omega Z) \ar@{_{(}->}[d] \\
\mathfrak{h}\Phi_n G \ar[rr]^{\mathfrak{h}\phi} &&
[J(\Sigma X), \Omega Z].
}
\]
Since ${\rm Sh}^{N}_m(\Sigma X; \Omega Z)\subseteq\mathcal{Z}_m\Omega Z^{\ast}(\Sigma X)$, any element in $\mathcal{N}_{\Omega Z^{\ast}(\Sigma X)}{\rm Sh}^{N}(\Sigma X; \Omega Z) $ is of the form $\prod_m d^{i_1}d^{i_2}\ldots d^{i_{k_m}}y_m$ with $y_m\in {\rm Sh}_m^{N}(\Sigma X; \Omega Z)$ and $i_1>i_2\cdots>i_m$, which in particular, implies that 
\[\mathcal{Z}_m(\mathcal{N}_{\Omega Z^{\ast}(\Sigma X)}{\rm Sh}^{N}(\Sigma X; \Omega Z) )={\rm Sh}_m^{N}(\Sigma X; \Omega Z).\] 
Then we see that 
\[\mathcal{Z}_m(\mathcal{N}_{\Phi_n G}{\rm Sh})={\rm Sh}_m.\]
Meanwhile at each $m$-th stage we have a commutative diagram
\[
\xymatrix{
\mathfrak{b}_m \mathcal{N}_{\Phi_n G}{\rm Sh} \ar[rr] \ar@{_{(}->}[d]^{b_m}&&
\mathfrak{b}_m\mathcal{N}_{\Omega Z^{\ast}(\Sigma X)}{\rm Sh}\ar@{_{(}->}[d]\\
\mathfrak{h}_m\mathcal{N}_{\Phi_n G}{\rm Sh} \ar[rr] \ar@{_{(}->}[d]^{i_m} &&
\mathfrak{h}_m\mathcal{N}_{\Omega Z^{\ast}(\Sigma X)}{\rm Sh}^{N}(\Sigma X; \Omega Z) \ar@{_{(}->}[d]^{i_m} \\
\mathfrak{h}_m\Phi_n G \ar[rr]^{\mathfrak{h}_m\phi}  \ar[drr]_{e_m}&&
[J_{m+1}(\Sigma X), \Omega Z] \ar[d]^{{\rm ev}_{m+1}^\ast}\\
 &&
\lbrack\Sigma \mathcal{F}_{m+1}(X), \Omega Z\rbrack,
}
\]
where $\mathfrak{b}_m\mathcal{N}_{\Omega Z^{\ast}(\Sigma X)}{\rm Sh}={\rm Ker}({\rm ev}_{m+1}^\ast\circ i_m)$ and the top square is defined by taking the pullback. 
We then have a commutative diagram
\[
\xymatrix{
{\rm Sh}_m^{N}(\Sigma X; \Omega Z) 
\ar@{^{(}->}[rr]  \ar@{=}[d]&&
\mathfrak{b}_{m}\mathcal{N}_{\Omega Z^{\ast}(\Sigma X)}{\rm Sh}
\ar[rr]^{p_m}
\ar@{_{(}->}[d]^{b_m}
&&
\mathfrak{b}_{m-1}\mathcal{N}_{\Omega Z^{\ast}(\Sigma X)}{\rm Sh}
\ar@{_{(}->}[d]^{b_{m-1}}
\\
{\rm Sh}_m^{N}(\Sigma X; \Omega Z) 
\ar@{^{(}->}[rr]  \ar@{_{(}->}[d]
&&
\mathfrak{h}_{m}\mathcal{N}_{\Omega Z^{\ast}(\Sigma X)}{\rm Sh}
\ar@{->>}[rr]^{p_m}
\ar@{_{(}->}[d]^{i_m}
&&
\mathfrak{h}_{m-1}\mathcal{N}_{\Omega Z^{\ast}(\Sigma X)}{\rm Sh}
\ar@{_{(}->}[d]^{i_{m-1}}
\\
[(\Sigma X)^{\wedge (m+1)}, \Omega Z]
\ar@{^{(}->}[rr]  \ar[d]^{\bar{{\rm ev}}_{m+1}^\ast}
&&
[J_{m+1}(\Sigma X), \Omega Z]
\ar@{->>}[rr]^{p_m}
 \ar[d]^{{\rm ev}_{m+1}^\ast}
 &&
 [J_{m}(\Sigma X), \Omega Z]
  \ar[d]^{{\rm ev}_{m}^\ast}
 \\
 [\Sigma D_{m+1}(X), \Omega Z]
 \ar[rr]
 &&
[\Sigma \mathcal{F}_{m+1}(X), \Omega Z]\ar[rr]
&&
[\Sigma \mathcal{F}_{m}(X), \Omega Z],
}
\]
where the composition of maps in each column is zero, and the kernel of $p_m$ in the top row is ${\rm Sh}_m^{N}(\Sigma X; \Omega Z)$. Then we have a morphism 
\[p_m: \mathfrak{b}_m \mathcal{N}_{\Phi_n G}{\rm Sh}\rightarrow \mathfrak{b}_{m-1} \mathcal{N}_{\Phi_n G}{\rm Sh}\]
with kernel ${\rm Sh}_m$, and denote $\mathfrak{b} \mathcal{N}_{\Phi_n G}{\rm Sh}=\lim_m\mathfrak{b}_m \mathcal{N}_{\Phi_n G}{\rm Sh}$.
\begin{proposition}\label{shthm}
We have a commutative diagram of groups
\[
\xymatrix{
\mathfrak{h}\Phi_n G 
\ar[rr]^{\mathfrak{h}\phi}
\ar@{->>}[d]
&&
[J(\Sigma X), \Omega Z]
\ar[d]^{\Omega}
\\
\mathfrak{h}\Phi_n G /\mathfrak{b}\mathcal{N}_{\Phi_n G}{\rm Sh}
\ar@{.>}[rr]^{\Omega\mathfrak{h}\phi}
&&
\lbrack \Omega^2\Sigma^2 X, \Omega^2 Z\rbrack.
}
\]
\end{proposition}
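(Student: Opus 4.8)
The plan is to establish the two facts that make the square commute: that $\mathfrak{b}\mathcal{N}_{\Phi_n G}{\rm Sh}$ is a normal subgroup of $\mathfrak{h}\Phi_n G$, so that the quotient and the surjection on the left are defined, and that $\mathfrak{b}\mathcal{N}_{\Phi_n G}{\rm Sh}\subseteq{\rm Ker}(\Omega\circ\mathfrak{h}\phi)$, so that $\Omega\circ\mathfrak{h}\phi$ descends to the dotted arrow $\Omega\mathfrak{h}\phi$. The normality is inherited from the construction: each ${\rm Sh}_i$ is normal in $\Phi_n G_i$, so its generalized normal bi-$\Delta$-extension $\mathcal{N}_{\Phi_n G}{\rm Sh}$ is normal stagewise, and $\mathfrak{b}_m\mathcal{N}_{\Phi_n G}{\rm Sh}$, being the intersection of $\mathfrak{h}_m\mathcal{N}_{\Phi_n G}{\rm Sh}$ with the $\mathfrak{h}_m\phi$-preimage of the normal subgroup $\mathfrak{b}_m\mathcal{N}_{\Omega Z^{\ast}(\Sigma X)}{\rm Sh}={\rm Ker}({\rm ev}_{m+1}^\ast\circ i_m)$, is normal in $\mathfrak{h}_m\Phi_n G$; passing to the inverse limit keeps normality. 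So the substance is the kernel containment.

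For the containment I would reinterpret the loop homomorphism through the evaluation map. Recall ${\rm ev}:\Sigma\mathcal{F}(X)\to J(\Sigma X)$ was built in Section \ref{evaluation} as the adjoint of $\Omega({\rm id})$, i.e.\ as the counit of the suspension-loop adjunction at $\Omega\Sigma^2 X$. By naturality of that counit, for every class $g\in[J(\Sigma X),\Omega Z]$ the loop $\Omega g$ is the adjoint of $g\circ{\rm ev}={\rm ev}^\ast(g)$ under $[\Omega^2\Sigma^2 X,\Omega^2 Z]\cong[\Sigma\mathcal{F}(X),\Omega Z]$. Hence $\Omega(\mathfrak{h}\phi(x))=0$ is equivalent to ${\rm ev}^\ast(\mathfrak{h}\phi(x))=0$, and it suffices to show that ${\rm ev}^\ast\circ\mathfrak{h}\phi$ kills $\mathfrak{b}\mathcal{N}_{\Phi_n G}{\rm Sh}$.

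Next I would argue stagewise. An element $x\in\mathfrak{b}\mathcal{N}_{\Phi_n G}{\rm Sh}=\lim_m\mathfrak{b}_m\mathcal{N}_{\Phi_n G}{\rm Sh}$ is a compatible system $(x_m)$; since $\mathfrak{h}\phi$ and ${\rm ev}$ preserve the skeletal filtrations, the restriction of ${\rm ev}^\ast(\mathfrak{h}\phi(x))$ to $\Sigma\mathcal{F}_{m+1}(X)$ equals ${\rm ev}_{m+1}^\ast(\mathfrak{h}_m\phi(x_m))$. The defining pullback places $\mathfrak{h}_m\phi(x_m)$ in $\mathfrak{b}_m\mathcal{N}_{\Omega Z^{\ast}(\Sigma X)}{\rm Sh}={\rm Ker}({\rm ev}_{m+1}^\ast\circ i_m)$, so this restriction is null-homotopic; equivalently, this is the vanishing of the column composites in the large diagram preceding the statement, whose leftmost column is null exactly by Corollary \ref{newnull0} (the vanishing of ${\rm sh}\circ\bar{{\rm ev}}_{m+1}$). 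Therefore ${\rm ev}^\ast(\mathfrak{h}\phi(x))$ dies on every filtration stage, i.e.\ maps to $0$ in $\lim_m[\Sigma\mathcal{F}_{m+1}(X),\Omega Z]$.

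The remaining, and genuinely delicate, step is to promote this stagewise vanishing to the honest triviality of ${\rm ev}^\ast(\mathfrak{h}\phi(x))$ on $\Sigma\mathcal{F}(X)=\mathrm{colim}_m\Sigma\mathcal{F}_{m+1}(X)$, past the potential ${\lim_m}^1$ term in the Milnor sequence for $[\Sigma\mathcal{F}(X),\Omega Z]$. This is where the inverse-limit (Cohen-group) bookkeeping is essential: because $x$ lives in the tower $\{\mathfrak{b}_m\mathcal{N}_{\Phi_n G}{\rm Sh},p_m\}$ whose transition maps have kernel ${\rm Sh}_m$ and which map compatibly under the ${\rm ev}_{m+1}^\ast$, the null-homotopies produced at successive stages can be chosen coherently, yielding $\Omega(\mathfrak{h}\phi(x))=0$ in the limit. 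This coherence of the identification $\Omega\simeq{\rm ev}^\ast$ with the filtration is precisely the loop-homomorphism proposition established in the appendix (Section \ref{AppendA}); granting it, $\Omega\circ\mathfrak{h}\phi$ factors through $\mathfrak{h}\Phi_n G/\mathfrak{b}\mathcal{N}_{\Phi_n G}{\rm Sh}$ and the square commutes. I expect this limit-coherence to be the main obstacle, the finite-stage input being already packaged in the diagram and Corollary \ref{newnull0}.
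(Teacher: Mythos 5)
Your proof takes essentially the same route as the paper's: the paper's entire argument is the observation that $e_m\circ i_m\circ b_m=0$ for each $m$ (your ``stagewise vanishing'', which is exactly the column-vanishing in the large diagram fed by Corollary \ref{newnull0}), followed by the assertion that this ``implies at the infinity'' that $\mathrm{ev}^\ast\circ\mathfrak{h}\phi\circ i\circ b=0$. Your additional bookkeeping on normality and on the adjunction identifying $\Omega g$ with $\mathrm{ev}^\ast(g)$ is correct and only makes explicit what the paper leaves implicit.

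One caveat: the step you flag as ``genuinely delicate'' --- promoting vanishing on every $\Sigma\mathcal{F}_{m+1}(X)$ to vanishing in $[\Sigma\mathcal{F}(X),\Omega Z]$ past a possible ${\lim}^1$ term --- is \emph{not} resolved anywhere in the appendix. Proposition \ref{lastnull} there is itself only a statement about null-homotopy on a single filtration stage, so citing Section \ref{AppendA} for the limit coherence is a phantom reference; the paper itself simply asserts the passage to infinity. If you want to close this step honestly, the cleanest argument is that the tower $\{[\Sigma\mathcal{F}_{m+1}(X),\Omega Z]\}_m$ has surjective transition maps (for instance because $\Sigma\mathcal{F}(X)\simeq\Sigma\Omega^2\Sigma^2X$ admits the suspension splitting $\bigvee_m\Sigma D_m(X)$ compatible with the filtration), so the tower is Mittag--Leffler, ${\lim}^1$ vanishes, and stagewise triviality of $\mathrm{ev}^\ast(\mathfrak{h}\phi(x))$ does give triviality on the colimit. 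With that substitution your argument is complete and matches the paper's.
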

\begin{proof}
We have $e_{m}\circ i_m\circ b_m=0$ for each $m$ which implies at the infinity
\[e\circ i\circ b=0:\mathfrak{b}\mathcal{N}_{\Phi_n G}{\rm Sh}\rightarrow \mathfrak{h}\Phi_n G \stackrel{\mathfrak{h}\phi}{\rightarrow} [J(\Sigma X), \Omega Z]\stackrel{{\rm ev}^\ast}{\rightarrow}\lbrack\Sigma \mathcal{F}(X), \Omega Z\rbrack.\]
\end{proof}

\section{Higher relations in Cohen groups}\label{Todasection}
\noindent In the spirit of our combinatorial treatment in Section \ref{Sh+Co}, we may find more relations in Cohen groups for double loop suspensions using the cell structure of permutohedra.
\subsection{Ladder spaces and Toda brackets}
\begin{definition}
A \textit{(pointed) ladder space} is a sequence of pairs of (pointed) topological spaces $\{(A_i, B_i)\}_{0\leq i\leq n}$ ($n$ can be $\infty$) with structural (pointed) maps $f_i: B_i \rightarrow A_i$ such that 
\[B_i\stackrel{f_i}{\longrightarrow} A_i\longrightarrow A_{i+1}\]
is a co-fibre sequence for each $i$. We may also call the total space $A={\rm colim}_{i}A_i$ the ladder space, and denote $A=\{(A_i, B_i)\}_{i\geq 0}$.
\end{definition}
For any such ladder space, we have an organised diagram 
\[
\xymatrix{ 
B_0 \ar[r]^{f_0} &
A_0\ar[d] &&&&&
\\
B_1 \ar[r]^{f_1} &
A_1  \ar[d]\ar[r]^{p_1}&
\Sigma B_0 \ar[r]^{\Sigma f_0} &
\Sigma A_0 \ar[d] &
&&\\
B_2 \ar[r]^{f_2} &
A_2  \ar[d]\ar[r]^{p_2}&
\Sigma B_1 \ar[r]^{\Sigma f_1} &
\Sigma A_1 \ar[r]^{\Sigma p_1}\ar[d] &
\Sigma^2 B_0 \ar[r]^{\Sigma^2 f_0}&
\Sigma^2 A_0 \ar[d]
&\\
B_3 \ar[r]^{f_3} &
A_3  \ar[d]\ar[r]^{p_3}&
\Sigma B_2 \ar[r]^{\Sigma f_2} &
\Sigma A_2 \ar[d]\ar[r]^{\Sigma p_2}&
\Sigma^2 B_1 \ar[r]^{\Sigma^2 f_1}&
\Sigma^2 A_1 \ar[d]\ar[r]^{\Sigma^2 p_1}&
\Sigma^3 B_0 \ar[r]&
\cdots
\\
& \vdots &&
\vdots &&
\vdots &
}
\]
Denote $p_i\circ f_i=g_i$ for each $i\geq 1$. Then by definition $\Sigma g_{i}\circ g_{i+1}$ is null homotopic. The constructions here obviously lead us to the Toda bracket \cite{Toda}. For any sequence of based maps
\[X\stackrel{\alpha}{\rightarrow} Y\stackrel{\beta}{\rightarrow}Z\stackrel{\gamma}{\rightarrow}W\]
such that $\beta\circ\alpha$ and $\gamma\circ\beta$ are both null-homotopic, Toda defined a set \[\langle\gamma, \beta, \alpha\rangle_T\subseteq \lbrack\Sigma X, W\rbrack,\]
which is a certain double coset of $\gamma_\ast [\Sigma X, Z]$ and $(\Sigma \alpha)^\ast[\Sigma Y, W]$. The higher oder generalizations were defined and studied by Spanier \cite{Spanier} and Cohen \cite{JCohen}, and generalized by Shipley for any triangulated category in the new century \cite{Shipley}. For any sequence of based maps
\begin{equation}\label{nseq}
Y_0\stackrel{\alpha_1}{\rightarrow} Y_1\stackrel{\alpha_2}{\rightarrow}\cdots \stackrel{\alpha_n}{\rightarrow}Y_n,
\end{equation}
the classical higher Toda bracket
\[\langle \alpha_n, \cdots, \alpha_2, \alpha_1\rangle_{T}\]
is a certain subset of $[\Sigma^{n-2} Y_0 , Y_n]$. The existence and vanishing of higher Toda brackets are characterized by the following theorem of Spanier:
\begin{theorem}[Theorem $6.3$ of \cite{Spanier}]
The Toda bracket 
\[\langle \alpha_n, \cdots, \alpha_2, \alpha_1\rangle_{T}\]
is well defined and vanishes (which means `$0$' belongs to the set) if and only if the sequence (\ref{nseq}) \textit{splits} in the sense of the following:
\begin{itemize}
\item[~] There exists a commutative diagram 
\[
\xymatrix{
&&Z_2 \ar[d]^{\gamma_2} & Z_3\ar[d]^{\gamma_3}&\cdots&Z_{n-1}\ar[d]^{\gamma_{n-1}}\\
Y_0 \ar[r]_{\alpha_1} & Y_1\ar[r]_{\alpha_2}\ar[ru]_{\beta_2} &
Y_2 \ar[r]_{\alpha_3}\ar[ru]_{\beta_3}&
Y_4  \ar[r]_{\alpha_4}\ar[ru]&
\cdots \ar[ru]\ar[r]_{\alpha_{n-1}}&
Y_{n-1}\ar[r]_{\alpha_{n}}&
Y_{n},
}
\]
such that $\beta_{i+1}\circ \gamma_{i}$ for $2\leq i\leq n-2$ and the two end composites $\beta_2\circ \alpha_1$ and $\alpha_n\circ \gamma_{n-1}$ are null homotoptic.  
\end{itemize}
\end{theorem}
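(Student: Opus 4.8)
The plan is to work in the stable homotopy category, where the sequence (\ref{nseq}) and its brackets are governed by cofibre sequences, and to argue by induction on $n$ using the inductive (filtered) definition of the higher Toda bracket. Recall that $\langle\alpha_n,\ldots,\alpha_1\rangle_T$ is defined through a \emph{defining system}: a choice of null-homotopies for each consecutive composite $\alpha_{i+1}\alpha_i$ together with higher coherence homotopies filling the successive mapping cones, and the bracket is the resulting obstruction in $[\Sigma^{n-2}Y_0,Y_n]$ to extending the system one stage further. Thus the bracket is well defined exactly when a defining system exists, and it vanishes exactly when such a system can be completed; the goal is to match this completion datum with the splitting datum $\{(Z_i,\beta_i,\gamma_i)\}$.

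For the base case $n=3$ the middle conditions are vacuous, so a splitting is a factorization $\alpha_2=\gamma_2\beta_2$ through some $Z_2$ with $\beta_2\alpha_1\simeq 0$ and $\alpha_3\gamma_2\simeq 0$. For $(\Leftarrow)$ I would argue that, since $\beta_2\alpha_1\simeq 0$, the map $\beta_2$ factors as $\bar\beta_2\circ q$ through the cofibre $q\colon Y_1\to C_{\alpha_1}$; then $\gamma_2\bar\beta_2$ is an extension $\hat\alpha_2\colon C_{\alpha_1}\to Y_2$ of $\alpha_2$, and the resulting bracket representative is the descent to $\Sigma Y_0$ of $\alpha_3\hat\alpha_2=(\alpha_3\gamma_2)\bar\beta_2$, which is null because $\alpha_3\gamma_2\simeq 0$; hence $0$ lies in the bracket. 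For $(\Rightarrow)$ I would run this in reverse: a null representative yields an extension of $\alpha_2$ over $C_{\alpha_1}$ together with a null-homotopy of its composite with $\alpha_3$, and taking $Z_2=C_{\alpha_1}$ and reading off $\beta_2,\gamma_2$ from the cofibre structure produces the factorization. This is the classical middle-factorization description of the ordinary Toda bracket.

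For the inductive step, assuming the result for sequences of length $n-1$, I would peel off the first map: form $C_1=C_{\alpha_1}$ with connecting map $\partial\colon\Sigma Y_0\to C_1$, use $\alpha_2\alpha_1\simeq 0$ to extend $\alpha_2$ to $\tilde\alpha_2\colon C_1\to Y_2$, and consider the shortened sequence $C_1\xrightarrow{\tilde\alpha_2}Y_2\xrightarrow{\alpha_3}\cdots\xrightarrow{\alpha_n}Y_n$, whose higher null-homotopies are inherited from a defining system for the original. The key compatibility, which follows from the inductive definition of the bracket, is that $\langle\alpha_n,\ldots,\alpha_1\rangle_T$ is obtained from $\langle\alpha_n,\ldots,\alpha_3,\tilde\alpha_2\rangle_T$ by restriction along $\Sigma^{n-3}\partial\colon\Sigma^{n-2}Y_0\to\Sigma^{n-3}C_1$, a degree check that organizes the whole argument. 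Applying the inductive hypothesis, the shortened bracket vanishes and its sequence splits, yielding data $\{(Z_i,\beta_i,\gamma_i)\}_{3\le i\le n-1}$ together with a factorization of $\tilde\alpha_2$ and the two end conditions; transporting this back across the cofibre sequence for $\alpha_1$ should recover $Z_2,\beta_2,\gamma_2$ at the seam and convert the shortened left end condition into $\beta_2\alpha_1\simeq 0$.

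The hard part will be the inductive step, and specifically the bookkeeping at the seam: I must verify that the factorization of the extended map $\tilde\alpha_2$ through $C_1$ descends to a genuine factorization of $\alpha_2$ itself through a space $Z_2$ in the original sequence, and that the higher null-homotopies supplied by the completed defining system assemble into precisely the cross-composite conditions $\beta_{i+1}\gamma_i\simeq 0$ for $2\le i\le n-2$. Naturality of cofibre sequences and of connecting maps should force these identifications, but keeping track of the suspensions introduced at each stage, so that the right-hand end condition $\alpha_n\gamma_{n-1}\simeq 0$ lands in the correct degree, is the delicate point; this is exactly where Spanier's filtered-space formalism does the organizing work.
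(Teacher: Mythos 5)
First, a point of reference: the paper does not prove this statement at all --- it is quoted verbatim as Theorem $6.3$ of \cite{Spanier} and used as a black box (only the direction ``splits $\Rightarrow$ the bracket is defined and vanishes'' is actually invoked, for ladder spaces). So there is no in-paper argument to compare against, and your proposal has to stand on its own as a reconstruction of Spanier's theorem.

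Your base case $n=3$ is correct and standard. The genuine gap is in the inductive step, and it sits exactly where you flag ``the hard part'': the claimed key compatibility is both unproved and, as written, not even well posed. The connecting map of the cofibre sequence $Y_0\xrightarrow{\alpha_1}Y_1\to C_{\alpha_1}$ runs $\partial\colon C_{\alpha_1}\to\Sigma Y_0$, not $\Sigma Y_0\to C_{\alpha_1}$, so $\Sigma^{n-3}\partial$ induces $\partial^\ast\colon[\Sigma^{n-2}Y_0,Y_n]\to[\Sigma^{n-3}C_{\alpha_1},Y_n]$; there is no ``restriction'' carrying the shortened bracket $\langle\alpha_n,\dots,\alpha_3,\tilde\alpha_2\rangle_T\subseteq[\Sigma^{n-3}C_{\alpha_1},Y_n]$ forward to $[\Sigma^{n-2}Y_0,Y_n]$. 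The actual passage is a lifting problem along $\partial^\ast$, solvable only for those representatives of the shortened bracket that die on $\Sigma^{n-3}Y_1$, and the assertion that the set of such lifts coincides with the $n$-fold bracket of the original sequence is essentially the whole content of the theorem --- it does not ``follow from the inductive definition,'' not least because Massey-type defining systems, Toda's iterated construction, and Spanier's carrier/filtered-space definition of the higher bracket are not obviously equivalent, and Theorem $6.3$ is proved in the last formalism. Two further issues compound this: your inductive step only sketches ``vanishing $\Rightarrow$ splitting,'' whereas the theorem is an equivalence and the paper needs the converse direction; and working in the stable category proves a weaker statement than the unstable one the paper applies to the spaces $\Sigma D_k(X)$. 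To repair the argument you would either need to prove the lifting/compatibility lemma relating the $n$-fold and $(n-1)$-fold brackets for your chosen definition (in both directions), or follow Spanier and encode the splitting data $\{(Z_i,\beta_i,\gamma_i)\}$ directly as a filtered space whose associated obstruction is the bracket.
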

\begin{corollary}\label{laddertoda}
For any ladder space $A=\{(A_i, B_i)\}_{i\geq 0}$, the associated sequence 
\[B_{n}\stackrel{g_n}{\longrightarrow} \Sigma B_{n-1}\stackrel{\Sigma g_{n-1}}{\longrightarrow}\cdots \stackrel{\Sigma^{n-1}g_1}{\longrightarrow}\Sigma ^{n}B_{0}\]
gives an $n$-length Toda bracket
\[\langle \Sigma^{n-1}g_1, \cdots, \Sigma g_{n-1}, g_n\rangle_{T}\]
which is well defined and vanishes. 
\end{corollary}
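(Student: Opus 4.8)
The plan is to apply Spanier's theorem (Theorem $6.3$ of \cite{Spanier}, quoted above) directly, by exhibiting an explicit splitting of the sequence built from the ladder. First I would fix notation: writing $\alpha_i=\Sigma^{i-1}g_{n-i+1}$ for $1\leq i\leq n$, the sequence in question is $Y_0\xrightarrow{\alpha_1}Y_1\xrightarrow{\alpha_2}\cdots\xrightarrow{\alpha_n}Y_n$ with $Y_i=\Sigma^i B_{n-i}$. Each consecutive composite is $\alpha_{i+1}\circ\alpha_i=\Sigma^{i-1}(\Sigma g_{n-i}\circ g_{n-i+1})$, which is null-homotopic since $\Sigma g_j\circ g_{j+1}\simeq\ast$ by the defining property of a ladder space noted above; hence the higher Toda bracket $\langle\Sigma^{n-1}g_1,\ldots,\Sigma g_{n-1},g_n\rangle_T$ is at least defined.

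For the splitting I would use the factorisation $g_i=p_i\circ f_i$ already built into the ladder, suitably suspended. Concretely, set $Z_i=\Sigma^{i-1}A_{n-i+1}$, together with $\beta_i=\Sigma^{i-1}f_{n-i+1}\colon Y_{i-1}\to Z_i$ and $\gamma_i=\Sigma^{i-1}p_{n-i+1}\colon Z_i\to Y_i$ for $2\leq i\leq n-1$. Then $\gamma_i\circ\beta_i=\Sigma^{i-1}(p_{n-i+1}\circ f_{n-i+1})=\Sigma^{i-1}g_{n-i+1}=\alpha_i$, so the triangles in Spanier's diagram commute and $Z_i,\beta_i,\gamma_i$ are legitimate splitting data.

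The key point is then to verify the three families of null-homotopies required by Spanier, and all of them collapse to the single fact that consecutive maps in a Puppe sequence compose to a null map. The cofibre sequence $B_j\xrightarrow{f_j}A_j\to A_{j+1}$ extends to the Puppe sequence whose connecting map is $p_{j+1}\colon A_{j+1}\to\Sigma B_j$ followed by $\Sigma f_j\colon\Sigma B_j\to\Sigma A_j$, so that $\Sigma f_j\circ p_{j+1}\simeq\ast$. Suspending and shifting indices yields $\beta_{i+1}\circ\gamma_i=\Sigma^{i-1}(\Sigma f_{n-i}\circ p_{n-i+1})\simeq\ast$ for $2\leq i\leq n-2$, while the two end conditions $\beta_2\circ\alpha_1=\Sigma f_{n-1}\circ p_n\circ f_n\simeq\ast$ and $\alpha_n\circ\gamma_{n-1}=\Sigma^{n-2}(\Sigma p_1\circ\Sigma f_1\circ p_2)\simeq\ast$ follow by the same mechanism. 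With the splitting diagram in place, Spanier's theorem immediately gives that the bracket is well defined and vanishes.

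I expect the only genuine obstacle to be the bookkeeping of indices: the Toda bracket is read right-to-left while the suspension degree increases left-to-right, which forces the intermediate spaces $Z_i$ and the maps $\beta_i,\gamma_i$ to be indexed with the reversal $n-i+1$, and one must check that the boundary cases $i=2$ and $i=n-1$ fall within the range of Spanier's hypotheses. Once this indexing is pinned down, no further homotopy-theoretic input is needed beyond the Puppe sequence and Spanier's criterion, so the argument is essentially formal.
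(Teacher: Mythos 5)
Your proposal is correct and is exactly the argument the paper intends: the organized diagram of the ladder space displayed just before the corollary is precisely Spanier's splitting diagram, with the suspended $A$'s as the intermediate spaces $Z_i$, the suspended $f$'s and $p$'s as the $\beta_i$ and $\gamma_i$, and all required null-homotopies coming from consecutive maps in the Puppe sequences. Your index bookkeeping checks out, so there is nothing to add.
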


\subsection{Secondary relations in Cohen groups}
Now for $D_k(X)=\mathcal{F}(k)^{+}\wedge_{\Sigma_k} X^{\wedge k}$, there are natural cofibrations 
\[\bigvee_{{k-1\choose{i+1}}}\Sigma^iX^{\wedge k}\simeq \bigvee_{k!\cdot {k-1\choose{k-i-2}}}S^{i}\wedge_{\Sigma_k} X^{\wedge k}\stackrel{f_i}{\longrightarrow}{\rm sk}_{i} D_k(X)\longrightarrow {\rm sk}_{i+1} D_k(X) \]
determined by the attaching maps of $\mathcal{F}(k)$ for any $0\leq i\leq k-2$, where ${\rm sk}_{i} D_k(X)={\rm sk}_{i}\mathcal{F}(k)^{+}\wedge_{\Sigma_k} X^{\wedge k}$.
Hence $D_k(X)$ is a ladder space such that 
\[D_k(X)=\{{\rm sk}_{i} D_k(X), \bigvee_{{k-1\choose{i+1}}}\Sigma^iX^{\wedge k}\}_{0\leq i\leq k-2}.\]
However, this ladder space is not useful as we will see in a moment. Instead, we consider the suspension of $D_k(X)$ with the inherited ladder structure:
\[\Sigma D_k(X)=\{\Sigma{\rm sk}_{i} D_k(X), \bigvee_{{k-1\choose{i+1}}}\Sigma^{i+1}X^{\wedge k}\}_{0\leq i\leq k-2}.\]
We then have a sequence of maps 
\begin{displaymath}
\xymatrix{
&\Sigma {\rm sk}_{k-2}D_k(X) \ar[d]^{\Sigma p_{k-2}}&
\Sigma^2 {\rm sk}_{k-3}D_k(X) \ar[d]^{\Sigma^2 p_{k-3}}&
\Sigma^3 {\rm sk}_{k-4}D_k(X) \ar[d]^{\Sigma^3 p_{k-4}}\\
\Sigma^{k-1}X^{\wedge k} \ar[ur]^{\Sigma f_{k-2}} \ar[r]_{\Sigma g_{k-2}}&
\Sigma^2\displaystyle{\bigvee_{ (k-1)}}\Sigma^{k-3}X^{\wedge k} \ar[ur]^{\Sigma^2 f_{k-3}} \ar[r]_{\Sigma^2 g_{k-3}}&
\Sigma^3 \displaystyle{\bigvee_{{k-1\choose{2}}}} \Sigma^{k-4}X^{\wedge k} \ar[ur]^{\Sigma^3 f_{k-4}} \ar[r]_{\Sigma^3 g_{k-4}}&
\Sigma^4 \displaystyle{\bigvee_{{k-1\choose{3}}}} \Sigma^{k-5}X^{\wedge k}
}
\end{displaymath}
which determines a Toda bracket $\langle \Sigma^3 g_{k-4},\Sigma^2 g_{k-3},\Sigma g_{k-2}\rangle_{T}$. By Corollary \ref{laddertoda}, we see this bracket vanishes. Hence $\langle \Sigma^3 g_{k-4},\Sigma^2 g_{k-3},\Sigma g_{k-2}\rangle_{T}$ as a group is isomorphic to the subgroup ${\rm Indet}_k={\rm Indet}((\Sigma X)^{\wedge k},\bigvee_{{k-1\choose{3}}}\Sigma^{-1}(\Sigma X)^{\wedge k})=$
\[(\Sigma^2 g_{k-2})^\ast[\bigvee_{k-1}(\Sigma X)^{\wedge k}, \bigvee_{{k-1\choose{3}}}\Sigma^{-1}(\Sigma X)^{\wedge k}]+(\Sigma^3g_{k-4})_\ast[(\Sigma X)^{\wedge k}, \bigvee_{{k-1\choose{2}}}\Sigma^{-1}(\Sigma X)^{\wedge k}]\]
of $[(\Sigma X)^{\wedge k},\bigvee_{{k-1\choose{3}}}\Sigma^{-1}(\Sigma X)^{\wedge k}]$.
The following lemma allows us to detect more relations in Cohen groups for double loop suspensions.
\begin{lemma}\label{2ndnull}
For any $h\in {\rm Indet}_k$, the composite $ h\circ \bar{{\rm ev}}_k$ is null homotopic. 
\end{lemma}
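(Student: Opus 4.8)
The plan is to reduce the statement to the attaching-map structure of the permutohedron encoded in the ladder space $\Sigma D_k(X)$, handling the two generating families of ${\rm Indet}_k$ separately: the primary family dies by the shuffle relation, while the secondary family requires the ladder Toda-bracket machinery. First I would record, from Lemma \ref{mainlemma}, that up to the self-equivalence $e$ of $(\Sigma X)^{\wedge k}$ (which has degree $\pm 1$ and is harmless, since ${\rm Indet}_k$ is a subgroup closed under negation) the map $\bar{{\rm ev}}_k$ equals $\Sigma c$, where $c\colon D_k(X)\to \Sigma^{k-1}X^{\wedge k}$ is the collapse of ${\rm sk}_{k-2}D_k(X)$ onto the top cell-quotient; equivalently $\bar{{\rm ev}}_k$ is, up to $e$, the top connecting map of the ladder $\Sigma D_k(X)=\{\Sigma{\rm sk}_i D_k(X),\ \bigvee_{\binom{k-1}{i+1}}\Sigma^{i+1}X^{\wedge k}\}$. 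In particular $\bar{{\rm ev}}_k$ is a suspension, hence a co-$H$-map, so $\bar{{\rm ev}}_k^{\ast}$ is additive and it suffices to annihilate the two summands of ${\rm Indet}_k$ separately.

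The primary summand is immediate. Writing $\alpha=\Sigma g_{k-2}$, Lemma \ref{combish} gives $\Sigma\alpha=\Sigma^2 g_{k-2}={\rm sh}$, so every generator $v\circ\Sigma^2 g_{k-2}=v\circ{\rm sh}$ of $(\Sigma^2 g_{k-2})^{\ast}[\,\cdot\,]$ satisfies
\[ v\circ{\rm sh}\circ\bar{{\rm ev}}_k \simeq v\circ(\,{\rm sh}\circ\bar{{\rm ev}}_k\,)\simeq \ast \]
by Corollary \ref{newnull0} (equivalently, this is the Puppe exactness $\Sigma f_{k-2}\circ p_{k-1}\simeq\ast$ at the top of the ladder).

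The secondary summand, the image of $(\Sigma^3 g_{k-4})_{\ast}=\gamma_{\ast}$ with $\beta=\Sigma^2 g_{k-3}$ and $\gamma=\Sigma^3 g_{k-4}$, is the heart of the matter. Here I would use $\Sigma\alpha\circ\bar{{\rm ev}}_k\simeq\ast$ to lift $\bar{{\rm ev}}_k$ through the cofibre sequence $C_\alpha\xrightarrow{q_\alpha}\Sigma(\Sigma^{k-1}X^{\wedge k})\xrightarrow{\Sigma\alpha}\Sigma Y_1$ of $\alpha$, obtaining $\ell\colon\Sigma D_k(X)\to C_\alpha$ with $q_\alpha\circ\ell\simeq\bar{{\rm ev}}_k$. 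If $\tilde\beta\colon C_\alpha\to Y_2$ is the extension of $\beta$ over $C_\alpha$ presenting the chosen bracket element $h$ (so $\gamma\circ\tilde\beta\simeq h\circ q_\alpha$), then $h\circ\bar{{\rm ev}}_k\simeq\gamma\circ\tilde\beta\circ\ell$, and this value is independent of the lift $\ell$ because two lifts differ by a map factoring through $Y_1$ and $\gamma\circ\beta\simeq\ast$. The composite $\tilde\beta\circ\ell$ represents the ladder Toda bracket $\langle\Sigma^2 g_{k-3},\Sigma g_{k-2},c\rangle$ of the top three connecting maps of $\Sigma D_k(X)$, which is defined and vanishes by Corollary \ref{laddertoda}; postcomposing with $\gamma$ and using $\gamma\circ\beta\simeq\ast$ kills the $\beta_{\ast}$-part of its indeterminacy, reducing the whole lemma to the single assertion $\gamma_{\ast}\circ\bar{{\rm ev}}_k^{\ast}=0$, i.e. $\gamma\circ u\circ\bar{{\rm ev}}_k\simeq\ast$ for every $u\colon(\Sigma X)^{\wedge k}\to Y_2$.

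The hard part is exactly this last reduction. The residual indeterminacy of $\langle\Sigma^2 g_{k-3},\Sigma g_{k-2},c\rangle$ that is \emph{not} of the form $\beta_\ast(\,\cdot\,)$ consists of maps factoring through $\bar{{\rm ev}}_k$ itself, so naively postcomposing with $\gamma$ merely reproduces a term of the same secondary type, and any purely formal Toda juggling is circular. Breaking this circularity is the obstacle, and it is precisely where the concrete cellular geometry of the permutohedron $P_k$ must enter: I expect to show that $\gamma$, being three suspensions of the cellular boundary three dimensions below the top, annihilates every class pulled back along the top-cell collapse $\bar{{\rm ev}}_k^{\ast}$, by realizing $\gamma\circ u\circ\bar{{\rm ev}}_k$ as a composite of two \emph{consecutive} ladder connecting maps and invoking $\Sigma g_{k-3}\circ\Sigma g_{k-2}\simeq\ast$ (a single instance of $\partial^2=0$ on $P_k$) rather than an abstract bracket identity. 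The auxiliary facts used along the way—that $\Sigma^2 g_{k-2}={\rm sh}$, that $\bar{{\rm ev}}_k$ is a suspension so $\bar{{\rm ev}}_k^{\ast}$ is additive, and that $\gamma\circ\tilde\beta\circ\ell$ is independent of the lift—are routine.
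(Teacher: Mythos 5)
Your handling of the first summand of ${\rm Indet}_k$ is fine: $\bar{{\rm ev}}_k$ is a suspension, so precomposition with it is additive, and $v\circ{\rm sh}\circ\bar{{\rm ev}}_k\simeq\ast$ follows from Corollary \ref{newnull0}. The genuine gap is the second summand $(\Sigma^3 g_{k-4})_\ast\bigl[(\Sigma X)^{\wedge k},\bigvee_{\binom{k-1}{2}}\Sigma^{-1}(\Sigma X)^{\wedge k}\bigr]$. Your argument reduces the lemma to the assertion that $\Sigma^3 g_{k-4}\circ u\circ\bar{{\rm ev}}_k\simeq\ast$ for an \emph{arbitrary} map $u$, and then stops at ``I expect to show\dots''. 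The mechanism you propose for closing it --- realizing $\Sigma^3 g_{k-4}\circ u\circ\bar{{\rm ev}}_k$ as two consecutive connecting maps of the ladder and invoking $\partial^2=0$ --- cannot work as stated, because for arbitrary $u$ the composite $u\circ\bar{{\rm ev}}_k$ is not a connecting map of anything; it is just some map into the wedge $\bigvee_{\binom{k-1}{2}}\Sigma^{-1}(\Sigma X)^{\wedge k}$. So the proposal is incomplete precisely at the point you yourself identify as the heart of the matter.

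For comparison, the paper does not split ${\rm Indet}_k$ into two summands at all. Since the bracket $\langle \Sigma^3 g_{k-4},\Sigma^2 g_{k-3},\Sigma g_{k-2}\rangle_{T}$ vanishes (Corollary \ref{laddertoda}), the coset it represents coincides with its indeterminacy subgroup, i.e.\ ${\rm Indet}_k$ \emph{is} the bracket as a set; so every $h\in{\rm Indet}_k$ is an element of the bracket, not merely of one of its two generating families. Writing $\bar{{\rm ev}}_k$ as the suspension of $p_{k-1}$, Toda's composition formula then gives
\begin{equation*}
h\circ\bar{{\rm ev}}_k\in\langle \Sigma^3 g_{k-4},\Sigma^2 g_{k-3},\Sigma g_{k-2}\rangle_{T}\circ\bar{{\rm ev}}_k\subseteq\langle \Sigma^3 g_{k-4},\Sigma^2 g_{k-3},\Sigma g_{k-2}\circ p_{k-1}\rangle_{T},
\end{equation*}
and $\Sigma g_{k-2}\circ p_{k-1}\simeq\ast$ because these are two consecutive connecting maps of the cofibration defining the top cell of $D_k(X)$; the paper then concludes by Toda's Proposition $1.2$ that a bracket with a null entry is $\{0\}$. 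Note that the circularity you worry about --- that the $\gamma_\ast$-part of the indeterminacy survives naive postcomposition --- is exactly the content that the paper discharges by this last citation, so if you want to repair your proof the single statement to establish is the vanishing of $\langle \Sigma^3 g_{k-4},\Sigma^2 g_{k-3},0\rangle_{T}$ on $\Sigma D_k(X)$; everything else in your write-up is routine.
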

\begin{proof}
We have that $\Sigma g_{k-2}$ is the desuspension of the shuffle map, and also the composition of maps 
\[D_k(X) \stackrel{p_{k-1}=\Sigma^{-1}\bar{{\rm ev}}_{k}}{\longrightarrow}\Sigma^{k-1}X^{\wedge k} \stackrel{\Sigma g_{k-2}=\Sigma^{-1}{\rm sh}}{\longrightarrow}\displaystyle{\bigvee_{k-1}}\Sigma^{k-1}X^{\wedge k}\]
is null homotopic. On the other hand, Toda (Proposition $1.2$ of \cite{Toda}) proved that
\[\langle\gamma, \beta, \alpha\rangle_T \circ \Sigma \delta\subseteq \langle\gamma, \beta, \alpha\circ \delta\rangle_T,\] 
and \[\langle\gamma, \beta, \alpha\rangle_T=\{0\}\]
if one of $\alpha$, $\beta$ or $\gamma$ is $0$.
Accordingly, 
\begin{eqnarray*}
&&\langle \Sigma^3 g_{k-4},\Sigma^2 g_{k-3},\Sigma g_{k-2}\rangle_{T}\circ \bar{{\rm ev}}_k \\
&\subseteq& \langle \Sigma^3 g_{k-4},\Sigma^2 g_{k-3},\Sigma g_{k-2}\circ\Sigma^{-1} \bar{{\rm ev}}_k \rangle_{T}\\
&=&\{0\},
\end{eqnarray*}
and the lemma follows.
\end{proof}
Now we may define ${\rm T}_{m}(\Sigma X; \Omega Z)$ to be the subgroup 
\[ \lbrack (\Sigma X)^{\wedge (m+1)}\stackrel{h}{\rightarrow}\bigvee_{{m\choose{3}}}\Sigma^{-1}(\Sigma X)^{\wedge (m+1)}\stackrel{f}{\rightarrow}\Omega Z~|~\forall~f, \forall ~h\in {\rm Indet}_{m+1}\rbrack \]
of $\mathcal{Z}_m \Omega Z^\ast(\Sigma X)$, and go through all the arguments in Section \ref{+Co} using ${\rm T}_{m}(\Sigma X; \Omega Z)$ instead of ${\rm Sh}_m(\Sigma X; \Omega Z)$. We then summarize the corresponding results in the following theorem. 
\begin{theorem}\label{Todarelation}
Given a representation $\phi: G\rightarrow \mathcal{Z}_n\Omega Z^{\ast}(\Sigma X)$ and hence its bi-$\Delta$-extension $\phi: \Phi_n G\rightarrow\Omega Z^{\ast}(\Sigma X)$, we have a bi-$\Delta$-normal subgroup $\mathcal{N}_{\Phi_n G} {\rm T}$ of $\Phi_n G$ as the normal bi-$\Delta$-extension of ${\rm T}=\{{\rm T}_i\}_{i\geq 4}$, where $T_i$ is defined by the pullback diagram 
\[
\xymatrix{
{\rm T}_i \ar[rr] \ar@{_{(}->}[d] &&
{\rm T}_i^{N}(\Sigma X; \Omega Z) \ar@{_{(}->}[d] \\
\Phi_n G_i \ar[rr]^{\phi_i} &&
\Omega Z^{\ast}(\Sigma X)_i.
}
\]
Moreover, we have a commutative diagram of groups
\[
\xymatrix{
\mathfrak{h}\Phi_n G 
\ar[rr]^{\mathfrak{h}\phi}
\ar@{->>}[d]
&&
[J(\Sigma X), \Omega Z]
\ar[d]^{\Omega}
\\
\mathfrak{h}\Phi_n G /\langle \mathfrak{b}\mathcal{N}_{\Phi_n G}{\rm Sh},\mathfrak{b}\mathcal{N}_{\Phi_n G}{\rm T}\rangle_{N}
\ar@{.>}[rr]^{\Omega\mathfrak{h}\phi}
&&
\lbrack \Omega^2\Sigma^2 X, \Omega^2 Z\rbrack,
}
\]
where \[\mathfrak{b}\mathcal{N}_{\Phi_n G}{\rm T}=\lim_m\mathfrak{b}_m\mathcal{N}_{\Phi_n G}{\rm T},\] 
and $\mathfrak{b}_m\mathcal{N}_{\Phi_n G}{\rm T}$ is defined by the pullback diagram 
\[
\xymatrix{
\mathfrak{b}_m \mathcal{N}_{\Phi_n G}{\rm T} \ar[rr] \ar@{_{(}->}[d]^{b_m}&&
\mathfrak{b}_m\mathcal{N}_{\Omega Z^{\ast}(\Sigma X)}{\rm T}\ar@{_{(}->}[d]\\
\mathfrak{h}_m\mathcal{N}_{\Phi_n G}{\rm T} \ar[rr]&&
\mathfrak{h}_m\mathcal{N}_{\Omega Z^{\ast}(\Sigma X)}{\rm T}^{N}(\Sigma X; \Omega Z)
}
\]
with
\[\mathfrak{b}_m\mathcal{N}_{\Omega Z^{\ast}(\Sigma X)}{\rm T}={\rm Ker}(\mathfrak{h}_m\mathcal{N}_{\Omega Z^{\ast}(\Sigma X)}{\rm T}^{N}(\Sigma X; \Omega Z) \rightarrow [\Sigma\mathcal{F}_{m+1}(X), \Omega Z]).\]
In addition, the sequence
\[0\rightarrow T_m\rightarrow \mathfrak{b}_m\mathcal{N}_{\Phi_n G}{\rm T}\rightarrow \mathfrak{b}_{m-1}\mathcal{N}_{\Phi_n G}{\rm T}\]
is exact.
\end{theorem}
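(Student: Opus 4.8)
The plan is to run the machinery of Section~\ref{+Co}---which produced the shuffle relations of Proposition~\ref{shthm}---essentially verbatim, with the family ${\rm T}_m(\Sigma X;\Omega Z)$ playing the role of ${\rm Sh}_m(\Sigma X;\Omega Z)$. The only genuinely new input is Lemma~\ref{2ndnull}: for every $h\in{\rm Indet}_{m+1}$ and every $f$, the composite $f\circ h\circ\bar{{\rm ev}}_{m+1}$ is null homotopic. Since an element of ${\rm T}_m(\Sigma X;\Omega Z)$ is by definition of the form $f\circ h$ with $h\in{\rm Indet}_{m+1}$, this gives
\[
\bar{{\rm ev}}_{m+1}^\ast=0\colon{\rm T}_m^{N}(\Sigma X;\Omega Z)\longrightarrow[\Sigma D_{m+1}(X),\Omega Z],
\]
the extension to normal closures being automatic because $\bar{{\rm ev}}_{m+1}^\ast$ is a group homomorphism and hence annihilates the normal closure of anything it annihilates. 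Composing with the evaluation square of Section~\ref{evaluation} then yields ${\rm ev}_{m+1}^\ast\circ p^\ast=0$ on ${\rm T}_m^N(\Sigma X;\Omega Z)$, which is exactly the seed fact that drove the shuffle argument.

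Before invoking the machinery I would record that ${\rm Indet}_k$ is a subgroup of $[(\Sigma X)^{\wedge k},\bigvee_{{k-1\choose 3}}\Sigma^{-1}(\Sigma X)^{\wedge k}]$; this is automatic, since the indeterminacy of any three-fold Toda bracket is the sum of two subgroups of the relevant hom-group, as displayed just before Lemma~\ref{2ndnull}. Consequently ${\rm T}_m(\Sigma X;\Omega Z)$ is a subgroup of $\mathcal{Z}_m\Omega Z^\ast(\Sigma X)=[(\Sigma X)^{\wedge(m+1)},\Omega Z]$, its normal closure ${\rm T}_m^N(\Sigma X;\Omega Z)$ is normal, and the pullback square defining ${\rm T}_i$ along $\phi_i$ makes sense. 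I would then form the generalized normal bi-$\Delta$-extension $\mathcal{N}_{\Phi_nG}{\rm T}$ of ${\rm T}=\{{\rm T}_i\}_{i\geq4}$ (setting ${\rm T}_i=\{0\}$ for $i<4$) via Definition~\ref{normalext1}, together with its companion $\mathcal{N}_{\Omega Z^\ast(\Sigma X)}{\rm T}^N(\Sigma X;\Omega Z)$. Exactly as for ${\rm Sh}$, because every ${\rm T}_m^N(\Sigma X;\Omega Z)$ consists of Moore cycles, every element of the extension has the normal form $\prod_m d^{i_1}\cdots d^{i_{k_m}}y_m$ with $y_m\in{\rm T}_m^N(\Sigma X;\Omega Z)$, giving the Moore-cycle identifications $\mathcal{Z}_m(\mathcal{N}_{\Omega Z^\ast(\Sigma X)}{\rm T}^N(\Sigma X;\Omega Z))={\rm T}_m^N(\Sigma X;\Omega Z)$ and $\mathcal{Z}_m(\mathcal{N}_{\Phi_nG}{\rm T})={\rm T}_m$.

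With this in place I would assemble the three-column ladder diagram of Section~\ref{+Co}, now with ${\rm ev}_{m+1}^\ast$ and $\bar{{\rm ev}}_{m+1}^\ast$ in the bottom rows; the seed vanishing guarantees that the composite down each column is zero. Defining $\mathfrak{b}_m\mathcal{N}_{\Omega Z^\ast(\Sigma X)}{\rm T}$ as the kernel of ${\rm ev}_{m+1}^\ast\circ i_m$ and pulling back along $\mathfrak{h}_m\phi$ produces $\mathfrak{b}_m\mathcal{N}_{\Phi_nG}{\rm T}$, and a diagram chase shows that the restriction of $p_m$ to $\mathfrak{b}_m\mathcal{N}_{\Phi_nG}{\rm T}$ has kernel exactly ${\rm T}_m$; this is both the asserted exactness of $0\to T_m\to\mathfrak{b}_m\mathcal{N}_{\Phi_nG}{\rm T}\to\mathfrak{b}_{m-1}\mathcal{N}_{\Phi_nG}{\rm T}$ and the statement that the inverse system is well defined, so $\mathfrak{b}\mathcal{N}_{\Phi_nG}{\rm T}=\lim_m\mathfrak{b}_m\mathcal{N}_{\Phi_nG}{\rm T}$ exists. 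Passing to the limit gives $e\circ i\circ b=0$, i.e. ${\rm ev}^\ast\circ\mathfrak{h}\phi$ kills $\mathfrak{b}\mathcal{N}_{\Phi_nG}{\rm T}$, just as in the proof of Proposition~\ref{shthm}. Since the loop homomorphism $\Omega$ is realized, through the adjunction $[\Sigma\mathcal{F}(X),\Omega Z]\cong[\Omega^2\Sigma^2X,\Omega^2Z]$, as the evaluation-induced map ${\rm ev}^\ast$, both $\mathfrak{b}\mathcal{N}_{\Phi_nG}{\rm Sh}$ and $\mathfrak{b}\mathcal{N}_{\Phi_nG}{\rm T}$ lie in $\ker(\Omega\circ\mathfrak{h}\phi)$; hence so does the normal closure $\langle\mathfrak{b}\mathcal{N}_{\Phi_nG}{\rm Sh},\mathfrak{b}\mathcal{N}_{\Phi_nG}{\rm T}\rangle_N$, and $\Omega\circ\mathfrak{h}\phi$ factors through the displayed quotient.

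The substantive content is carried entirely by Lemma~\ref{2ndnull}, and behind it the vanishing of the ladder Toda bracket of Corollary~\ref{laddertoda}; the remainder is bookkeeping transported from Section~\ref{+Co}. The step I expect to need the most care is the Moore-cycle identification $\mathcal{Z}_m(\mathcal{N}_{\Phi_nG}{\rm T})={\rm T}_m$ together with the kernel computation for $p_m$: one must check that the generalized normal bi-$\Delta$-extension of a family indexed by $i\geq4$ (rather than a single group or an honest partial bi-$\Delta$-subgroup) introduces no spurious Moore cycles in degrees where coface images of lower-degree generators might interfere, and that forming the \emph{joint} normal closure with the shuffle family ${\rm Sh}$ leaves these identifications undisturbed. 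Both follow from the normal-form description of elements of a generalized normal bi-$\Delta$-extension, but this is where the argument is least automatic.
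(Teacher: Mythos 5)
Your proposal is correct and follows exactly the route the paper takes: the paper itself proves this theorem only by saying "go through all the arguments in Section \ref{+Co} using ${\rm T}_{m}(\Sigma X;\Omega Z)$ instead of ${\rm Sh}_m(\Sigma X;\Omega Z)$," with Lemma \ref{2ndnull} supplying the seed vanishing $\bar{{\rm ev}}_{m+1}^\ast=0$ on ${\rm T}_m^N(\Sigma X;\Omega Z)$ in place of Corollary \ref{newnull0}. Your added attention to the Moore-cycle identification and the joint normal closure is, if anything, more careful than the paper's own treatment.
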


\section{Appendix: James-Hopf operations of abelian bi-$\Delta$-groups}\label{AppendA}
\noindent As we mentioned in the introduction, a Cohen representation can be used to study many kinds of maps in homotopy theory. One of the classic tools to construct maps is the James-Hopf invariant. In \cite{Wu}, the second author defined a combinatorial analogy to James-Hopf operation for any (weak) bi-$\Delta$-group. Given any bi-$\Delta$-group $\mathcal{G}$, the \textit{James-Hopf homomorphism} $H_{k,n}: \mathcal{G}_k\rightarrow \mathcal{G}_n$ is defined by $H_{k,k}={\rm id}$, and for $n\geq k$ and $x\in \mathcal{G}_k$
\[H_{k,n}(x)=\prod_{0\leq i_1<i_2<\cdots< i_{n-k}\leq n} d^{i_{n-k}}d^{i_{n-k-1}}\cdots d^{i_{1}}(x)\]
with lexicographic order from right. We may also set $H_{k,n}=0$ for $k>n$.
\begin{lemma}\label{chainhom}
Suppose $\mathcal{G}$ is an abelian bi-$\Delta$-group, then the James-Hopf operations induce a sequence of group homomorphisms of Cohen groups 
\[H_{k,n}:\mathfrak{h}_k\mathcal{G}\rightarrow \mathfrak{h}_n\mathcal{G}.\]
Moreover, there are combinatorial equalities
\[p_nH_{k,n}=H_{k,n-1}+H_{k-1,n-1}p_k,\]
\[H_{n,m}H_{k,n}={{m-k}\choose{m-n}} H_{k,m}.\]
And the $H_{k,n}$'s determine a well defined morphism (also in non-abelian case)
\[H_k: \mathcal{Z}_k\mathcal{G}\rightarrow \mathfrak{h}\mathcal{G}.\]
\end{lemma}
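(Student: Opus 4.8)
The plan is to derive all four assertions from a single face computation, performed first in the abelian setting and then refined on Moore cycles. Throughout I write $d_j$ for faces and $d^i$ for cofaces, subject to the bi-$\Delta$-relations recalled above. Since $\mathcal{G}$ is abelian each coface $d^i$ is a homomorphism and each $G_n$ is commutative, so the defining product for $H_{k,n}$ is order-independent and $H_{k,n}\colon G_k\to G_n$ is a homomorphism; as composites and products of homomorphisms into an abelian group are again homomorphisms, once $H_{k,n}(\mathfrak{h}_k\mathcal{G})\subseteq\mathfrak{h}_n\mathcal{G}$ is known the restriction to $\mathfrak{h}_k\mathcal{G}$ is automatically a homomorphism.

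First I would prove the membership $H_{k,n}(\mathfrak{h}_k\mathcal{G})\subseteq\mathfrak{h}_n\mathcal{G}$ and the identity $p_nH_{k,n}=H_{k,n-1}+H_{k-1,n-1}p_k$ in one stroke, by computing $d_jH_{k,n}(x)$ for $x\in\mathfrak{h}_k\mathcal{G}$ and each $0\le j\le n$. Using that $d_j$ is a homomorphism, distribute it over the product indexed by the $(n-k)$-subsets $S=\{i_1<\cdots<i_{n-k}\}$ of $\{0,\dots,n\}$ and reduce each term $d_j\,d^{i_{n-k}}\cdots d^{i_1}(x)$ by sliding $d_j$ rightward through the cofaces via $d_jd^i=d^{i-1}d_j$ for $j<i$, $d_jd^i=\mathrm{id}$ for $j=i$, and $d_jd^i=d^id_{j-1}$ for $j>i$. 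Each subset $S$ falls into exactly one of two classes: either the sliding face meets a matching coface and cancels, leaving a composite of $n-k-1$ cofaces applied to $x$; or it slides past all cofaces and reaches $x$, turning into $d_{j'}x=p_kx$ (this is where $x\in\mathfrak{h}_k\mathcal{G}$ enters), leaving a composite of $n-k$ cofaces applied to $p_kx$. An index-shift bijection identifies the cancelling class with the indexing set of $H_{k,n-1}$ and the surviving class with that of $H_{k-1,n-1}$, and, crucially, shows the resulting total is independent of $j$. This yields simultaneously $d_0H_{k,n}(x)=\cdots=d_nH_{k,n}(x)$, i.e. $H_{k,n}(x)\in\mathfrak{h}_n\mathcal{G}$, and the value $p_nH_{k,n}(x)=H_{k,n-1}(x)+H_{k-1,n-1}(p_kx)$. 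The bookkeeping of this bijection—matching which subsets cancel against which surviving indices, with all the $\pm1$ shifts in the coface indices—is the main obstacle, and I would isolate it as a purely combinatorial lemma about $(n-k)$-subsets of $\{0,\dots,n\}$.

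For the second identity I would distribute $H_{n,m}$ over $H_{k,n}(x)=\prod_S d^S(x)$ and rewrite each composite $d^Td^S$ of $m-k$ cofaces in canonical strictly-increasing-index form $d^U$ using only the coface relation $d^jd^i=d^{i+1}d^j$ for $j\le i$. The content is then the counting lemma that, for each $(m-k)$-subset $U$, the number of pairs $(T,S)$ with $|T|=m-n$, $|S|=n-k$ and $d^Td^S=d^U$ equals $\binom{m-k}{m-n}$; equivalently this counts the factorizations of a fixed order-preserving injection through an intermediate stage, i.e. the interleavings of the two blocks of cofaces. Summing gives $H_{n,m}H_{k,n}=\binom{m-k}{m-n}H_{k,m}$.

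Finally, for $H_k\colon\mathcal{Z}_k\mathcal{G}\to\mathfrak{h}\mathcal{G}$ I would specialize to $x\in\mathcal{Z}_k\mathcal{G}\subseteq\mathfrak{h}_k\mathcal{G}$, where $p_kx=0$ by definition of Moore cycles (Proposition~\ref{progroup}). Then the surviving class above contributes nothing and the face computation collapses to $d_jH_{k,n}(x)=H_{k,n-1}(x)$ for every $j$. This step holds verbatim without commutativity: every term that would obstruct independence of $j$ is now the identity element, so its position in the ordered product is immaterial, which both re-establishes $H_{k,n}(x)\in\mathfrak{h}_n\mathcal{G}$ and gives $p_nH_{k,n}(x)=H_{k,n-1}(x)$ in the non-abelian case. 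Hence the tuple $\big(H_{k,n}(x)\big)_{n\ge k}$ is compatible with the bonding maps $p_n$ and, since $p_kx=0$ makes all components below level $k$ trivial, defines an element $H_k(x)\in\lim_n\mathfrak{h}_n\mathcal{G}=\mathfrak{h}\mathcal{G}$, as required.
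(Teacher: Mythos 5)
The paper states Lemma~\ref{chainhom} without proof (it is imported from the second author's memoir \cite{Wu}), so there is no in-paper argument to compare against; your reconstruction is the standard one and is correct. The face-through-coface computation, the two-class split of the index sets (with the surviving class producing $d_{j'}x=p_kx$ and the cancelling class reindexing to $H_{k,n-1}$), the $\binom{m-k}{m-n}$ count of factorizations $d^Td^S=d^U$, and the collapse on Moore cycles are exactly the right mechanisms, and your sanity checks against $p_n\sigma_n=\mathrm{id}+\sigma_{n-1}p_{n-1}$ and $H_{n,m}\sigma_n=(m-n+1)H_{n-1,m}$ (the two instances actually used in Proposition~\ref{split}) would confirm the signs and shifts. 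Two small points worth making explicit if you write this up: in the non-abelian case you should note that the index-shift bijection $S\mapsto S'$ is order-preserving for the lexicographic ordering used to define the product, so that deleting the identity factors really leaves the factors of $H_{k,n-1}(x)$ in the correct order; and the word ``morphism'' in the last clause should be read as ``well-defined map,'' since without commutativity $H_{k,n}(xy)=\prod_S d^S(x)d^S(y)$ need not equal $H_{k,n}(x)H_{k,n}(y)$ even on $\mathcal{Z}_k\mathcal{G}$ --- your argument correctly establishes well-definedness into the limit, which is the substantive content.
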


\begin{proposition}\label{split}
Given any abelian bi-$\Delta$-group $\mathcal{G}$ such that $\mathfrak{h}_i\mathcal{G}=0$ for any $i\leq k$, and $\mathcal{G}$ is $p$-local for some prime $p$ (which means each $\mathcal{G}_j$ is $p$-local),
then the short exact sequence
\[0\rightarrow \mathcal{Z}_m\mathcal{G}\rightarrow \mathfrak{h}_m\mathcal{G}\rightarrow \mathfrak{h}_{m-1}\mathcal{G}\rightarrow 0\]splits for any $m\leq p+k$.
Further, there is a short exact sequence for each $m$
\[0\rightarrow \mathcal{Z}_{m+1}\mathcal{G}\oplus\mathcal{Z}_{m+2}\mathcal{G}\oplus \cdots\oplus \mathcal{Z}_{m+p}\mathcal{G}\rightarrow \mathfrak{h}_{m+p}\mathcal{G}\rightarrow \mathfrak{h}_m\mathcal{G}\rightarrow 0.\]
\end{proposition}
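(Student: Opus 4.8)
The plan is to produce both splittings directly from the James--Hopf homomorphisms $H_{k,n}$ of Lemma \ref{chainhom}, using Proposition \ref{progroup} to supply the short exact sequences. First I would record two immediate consequences of the hypotheses. By Proposition \ref{progroup} every $p_n\colon\mathfrak{h}_n\mathcal{G}\to\mathfrak{h}_{n-1}\mathcal{G}$ is an epimorphism with kernel $\mathcal{Z}_n\mathcal{G}$; and since $\mathfrak{h}_i\mathcal{G}=0$ for $i\le k$ we get $\mathcal{Z}_i\mathcal{G}=\ker(p_i)=0$ for $i\le k$, so the tower $\mathfrak{h}_m\mathcal{G}\to\cdots\to\mathfrak{h}_{k+1}\mathcal{G}\to\mathfrak{h}_k\mathcal{G}=0$ is bounded below. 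This boundedness anchors all the constructions below.

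The engine is the following observation. For $x\in\mathcal{Z}_k\mathcal{G}$ one has $p_kx=0$, so the first combinatorial identity of Lemma \ref{chainhom} collapses to $p_nH_{k,n}(x)=H_{k,n-1}(x)$. Iterating this down the tower gives, for each $j$ and each $x\in\mathcal{Z}_{m+j}\mathcal{G}$,
\[
(p_{m+j+1}\cdots p_{m+p})\,H_{m+j,m+p}(x)=x ,
\]
and
\[
(p_{m+j}\cdots p_{m+p})\,H_{m+j,m+p}(x)=H_{m+j,m+j-1}(x)=0 ,
\]
the last equality because $H_{k,n}=0$ for $k>n$. Thus each $s_j:=H_{m+j,m+p}|_{\mathcal{Z}_{m+j}\mathcal{G}}$ is a section of the relevant graded quotient.

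For the second assertion I would set up the descending filtration $F_j=\ker\bigl(p_{m+j+1}\cdots p_{m+p}\colon\mathfrak{h}_{m+p}\mathcal{G}\to\mathfrak{h}_{m+j}\mathcal{G}\bigr)$, so that $0=F_p\subseteq\cdots\subseteq F_0=\ker(p_{m+1}\cdots p_{m+p})$ and, by Proposition \ref{progroup}, $F_{j-1}/F_j\cong\mathcal{Z}_{m+j}\mathcal{G}$. The computation above shows $s_j(\mathcal{Z}_{m+j}\mathcal{G})\subseteq F_{j-1}$ and that $s_j$ splits $F_{j-1}\twoheadrightarrow F_{j-1}/F_j$. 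Since $\mathcal{G}$ is abelian the $s_j$ assemble into a homomorphism $\Psi=\sum_{j=1}^p s_j\colon\bigoplus_{j=1}^p\mathcal{Z}_{m+j}\mathcal{G}\to\mathfrak{h}_{m+p}\mathcal{G}$, and peeling off the filtration from the bottom (applying the longest composite $p_{m+2}\cdots p_{m+p}$ to $\Psi(z)$ recovers the coordinate in $\mathcal{Z}_{m+1}\mathcal{G}$, after which one subtracts $s_1$ of it and repeats) shows $\Psi$ is injective and surjective onto $F_0$; this is precisely the asserted exact sequence. For the first assertion the same decomposition applied to the bounded tower $\mathfrak{h}_m\mathcal{G}=\ker(p_{k+1}\cdots p_m)$ yields $\mathfrak{h}_m\mathcal{G}\cong\bigoplus_{i=k+1}^m\mathcal{Z}_i\mathcal{G}$ compatibly with $p_m$, under which $p_m$ becomes the projection killing $\mathcal{Z}_m\mathcal{G}$ (since $p_mH_{i,m}(z_i)=H_{i,m-1}(z_i)$ for $i\le m-1$ and $=0$ for $i=m$); hence $0\to\mathcal{Z}_m\mathcal{G}\to\mathfrak{h}_m\mathcal{G}\to\mathfrak{h}_{m-1}\mathcal{G}\to0$ splits. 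Here the range $m\le p+k$ is exactly the requirement that this tower have at most $p$ steps.

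The hard part will be the bookkeeping that keeps the James--Hopf sections mutually consistent across the tower, and this is where $p$-locality enters through the second identity $H_{n,m}H_{k,n}=\binom{m-k}{m-n}H_{k,m}$: composing James--Hopf operators within a span of at most $p$ stages produces only binomial coefficients $\binom{m-i}{m-j}$ with $m-i\le p-1$, which are prime to $p$ and hence units in the $p$-local groups $\mathcal{G}_j$, so the relevant operators are invertible where required. I expect the only genuinely technical step to be verifying that every binomial coefficient arising from iterating this identity in the given range indeed has top entry at most $p-1$; granting this, both splittings and the direct-sum identification follow formally from the section computation of the second paragraph.
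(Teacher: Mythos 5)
Your argument is correct granting Lemma \ref{chainhom} as stated, and it follows the same basic strategy as the paper (James--Hopf sections plus peeling off the kernel filtration of the tower $\mathfrak{h}_{m+p}\mathcal{G}\to\cdots\to\mathfrak{h}_m\mathcal{G}$), but the execution is genuinely different in one respect. The paper builds its sections from the composites $\sigma_{m+p}\sigma_{m+p-1}\cdots\sigma_{m+p-i+1}=i!\cdot H_{m+p-i,m+p}$, i.e.\ it only invokes the identity $p_nH_{k,n}=H_{k,n-1}+H_{k-1,n-1}p_k$ in the special case $k=n-1$ and then composes; this forces the factor $i!$ into the composite $p_{m+p-i,m+p}\circ\sigma_{m+p,m+p-i}=i!\cdot\mathrm{id}$, and $p$-locality together with the bound $i<p$ (whence $m\le p+k$ in the first assertion) is used precisely to divide by $i!$. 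You instead apply the identity for general $k$ directly to $x\in\mathcal{Z}_{m+j}\mathcal{G}$, where the term $H_{m+j-1,n-1}p_{m+j}x$ dies, so that $H_{m+j,m+p}|_{\mathcal{Z}_{m+j}}$ is a section on the nose with no factorial to cancel --- equivalently, you are using the last clause of Lemma \ref{chainhom}, that $H_k\colon\mathcal{Z}_k\mathcal{G}\to\mathfrak{h}\mathcal{G}$ is already well defined. The upshot is that your proof never uses $p$-locality or the bound $m\le p+k$ at all, so (taken at face value) it proves a stronger statement than the proposition. Two remarks. First, your closing paragraph is a red herring: the identity $H_{n,m}H_{k,n}=\binom{m-k}{m-n}H_{k,m}$ and its binomial coefficients never enter your construction, so there is no remaining ``bookkeeping'' step where $p$-locality is needed; you should either delete that paragraph or recognize that the argument is already complete without it. Second, since an explicitly stated hypothesis of the proposition goes entirely unused in your proof, the honest thing to do is to flag this: either the hypothesis is superfluous (which your argument suggests, and which is consistent with the paper's own unproved assertion that $H_k$ is well defined on Moore cycles), or the unproved combinatorial identity in Lemma \ref{chainhom} should be rechecked in the form you use it (for general $k$, not just $k=n-1$); it does check out in low-degree computations, so I believe your version, but the discrepancy deserves a sentence.
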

\begin{proof} We only need to prove the second statement and for simplicity we may omit $\mathcal{G}$ in the notation.
By Lemma \ref{chainhom}, we have 
\[
p_n\sigma_n={\rm id}+\sigma_{n-1}p_{n-1},
\]
\[
H_{n,m}\sigma_n=(m-n+1)H_{n-1,m},
\]
where $\sigma_n=H_{n-1, n}=\sum\limits_{i=0}^{n}d^i$.
Hence, \[H_{n,m}\sigma_{n}\sigma_{n-1}\cdots\sigma_{s+1}=P_{n-s}^{m-s}H_{s,m},\]
where the combinatorial number 
\[P_{n-s}^{m-s}=(m-s)\cdots(m-n+2)(m-n+1).\]
In particular,
\[\sigma_{n}\sigma_{n-1}\cdots\sigma_{s+1}=(n-s)!\cdot H_{s,n}.\]
We then can show the following claims by straightforward calculations:
\begin{itemize}
\item 
\textbf{Claim $1$:}
~$p_{m+n}\sigma_{m+n}=n\cdot {\rm id}$ on $\mathcal{Z}_{m}$, where we view 
\[ \mathcal{Z}_{m}~{\rm as}~\sigma_{m+n-1}\sigma_{m+n-2}\cdots\sigma_{m+1}\mathcal{Z}_{m} \in \mathfrak{h}_{m+n-1}.\]
\item 
\textbf{Claim $2$:}
~\[p_{m+1}p_{m+2}\cdots p_{m+n}\sigma_{m+n}\cdots\sigma_{m+2}\sigma_{m+1\vert\mathcal{Z}_m}=n!\cdot{\rm id}_{\vert\mathcal{Z}_m}.\]
\end{itemize}
Then we can inductively construct a sequence of commutative diagrams $(\mathcal{D}^{(i)})$ for $0<i< p$
\[
\xymatrix{
\mathfrak{f}^{(i-1)}\ar@{^{(}->}[rr] \ar@{=}[d] &&
\mathfrak{f}^{(i)} \ar@{->>}[rr] \ar@{_{(}->}[d]&&
\mathcal{Z}_{m+p-i}\ar@{_{(}->}[d]\ar@/_0.7pc/[ll]_{\sigma_{m+p,m+p-i}}  \\
\mathfrak{f}^{(i-1)}\ar@{^{(}->}[rr] \ar[d]&&
\mathfrak{h}_{m+p}\ar@{->>}[rr]^{p_{m+p-i,m+p}}\ar@{->>}[d]&&
\mathfrak{h}_{m+p-i}\ar@{->>}[d]\\
0\ar[rr]&&
\mathfrak{h}_{m+p}/\mathfrak{f}^{(i)}\ar[rr]^{\cong}&&
\mathfrak{h}_{m+p-i-1},
}
\]
where all the rows and columns are short exact sequences, 
\[\mathfrak{f}^{(i-1)}\cong \mathcal{Z}_{m+p}\oplus \mathcal{Z}_{m+p-1}\oplus \cdots\oplus \mathcal{Z}_{m+p-i+1},\] and 
\[p_{m+p-i,m+p}=p_{m+p-i+1}\cdots p_{m+p-1}p_{m+p},\] 
\[\sigma_{m+p,m+p-i}=\sigma_{m+p}\sigma_{m+p-1}\cdots\sigma_{m+p-i+1}.\]
Since $p_{m+p-i,m+p}\cdot\sigma_{m+p,m+p-i}=i!\cdot{\rm id}_{\vert\mathcal{Z}_{m+p-i}}$, the top row of $(\mathcal{D}^{(i)})$ splits which induces the diagram $(\mathcal{D}^{(i+1)})$ for $i+1< p$. Then middle column of $(\mathcal{D}^{(p-1)})$ is the desired short exact sequence.
\end{proof}
This proposition has direct implication for the homotopy exponent problem. Let $Y$ be a connected co-$H$-space. Suppose we have a morphism of bi-$\Delta$-groups 
\[\mathcal{G}\rightarrow \Omega \Sigma Y^\ast(Y)=\{[Y^{\times (n+1)}, \Omega \Sigma Y]\}_{n\geq 0}\]
such that there exists some $g\in \mathfrak{h} \mathcal{G}$ as the representative of $[{\rm id}]\in [\Omega \Sigma Y, \Omega \Sigma Y]$.
Further, suppose we have a commutative diagram of groups homomorphisms 
\[
\xymatrix{
\mathfrak{h} \mathcal{G}\ar@{->>}[d] \ar[rr]
&&
[J(Y), J(Y)]
\ar[d]^{\Omega}
\\
\mathfrak{h} (\mathcal{G}/\mathcal{H})
\ar@{.>}[rr]
&&
\lbrack \Omega^2\Sigma Y, \Omega^2\Sigma Y\rbrack,
}
\]
where $\mathcal{H}$ is a normal bi-$\Delta$-subgroup of $\mathcal{G}$ such that $\mathcal{K}=\mathcal{G}/\mathcal{H}$ is an abelian bi-$\Delta$-group.
Then the problem of the $p$-exponent of $[\Omega^2 \Sigma Y, \Omega^2 \Sigma Y]$ is equivalent to a sequence of extension problems for $m\geq 1$:
\[0\rightarrow \mathcal{Z}_m\mathcal{K}\rightarrow \mathfrak{h}_m\mathcal{K}\rightarrow \mathfrak{h}_{m-1}\mathcal{K}\rightarrow 0.\]
We may call these extensions the obstructions to the exponent problem for $[\Omega^2 \Sigma Y, \Omega^2 \Sigma Y]$, and then for $\pi_\ast(\Sigma Y)$.
\begin{corollary}\label{0obstruction}
For the $p$-exponent of the homotopy groups $\pi_\ast(\Sigma Y)$, the first non-trivial obstruction can only appear when $m=p$. Further, it suffices to consider the obstructions when $m=(k+1)p-1$ for $k\geq 1$ associated to extensions
\[0\rightarrow \bigoplus_{i=0}^{p-1}\mathcal{Z}_{kp+i}\mathcal{K}\rightarrow \mathfrak{h}_{(k+1)p-1}\mathcal{K}\rightarrow \mathfrak{h}_{kp-1}\mathcal{K}\rightarrow 0.\]
\end{corollary}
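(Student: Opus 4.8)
The plan is to read off both assertions from Proposition \ref{split}, once the problem has been put in $p$-local form. Since we are after the $p$-exponent, I would first localize the entire diagram preceding the statement at $p$, so that $\mathcal{K}=\mathcal{G}/\mathcal{H}$ is a $p$-local abelian bi-$\Delta$-group and Proposition \ref{split} applies. As explained just before the corollary, the $p$-exponent of $[\Omega^2\Sigma Y,\Omega^2\Sigma Y]$ (and hence of $\pi_\ast(\Sigma Y)$, through the representative $g$ of $[{\rm id}]$) is governed by the tower of single-step extensions
\[0\rightarrow \mathcal{Z}_m\mathcal{K}\rightarrow \mathfrak{h}_m\mathcal{K}\rightarrow \mathfrak{h}_{m-1}\mathcal{K}\rightarrow 0,\]
whose inverse limit computes $\mathfrak{h}\mathcal{K}$; a split stage contributes no obstruction.

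For the first assertion I would apply the splitting part of Proposition \ref{split} with the vacuous hypothesis $k=-1$: the condition $\mathfrak{h}_i\mathcal{K}=0$ for all $i\leq -1$ holds trivially (stages are indexed by $i\geq 0$), so the proposition forces the displayed single-step extension to split for every $m\leq p-1$. Thus no obstruction can occur below $m=p$, which is precisely the claim that the first non-trivial obstruction can only appear at $m=p$.

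For the second assertion I would substitute $m\mapsto kp-1$ into the block short exact sequence of Proposition \ref{split}, which (needing only $p$-locality, since the splittings in its proof rely on $i!$ being units for $i<p$) yields
\[0\rightarrow \bigoplus_{i=0}^{p-1}\mathcal{Z}_{kp+i}\mathcal{K}\rightarrow \mathfrak{h}_{(k+1)p-1}\mathcal{K}\rightarrow \mathfrak{h}_{kp-1}\mathcal{K}\rightarrow 0\]
for every $k\geq 0$. The structural point is then cofinality: the subsequence $\{\mathfrak{h}_{kp-1}\mathcal{K}\}_{k\geq 0}$ together with the composed face maps $p_n$ is cofinal in the inverse system $\{\mathfrak{h}_m\mathcal{K}\}_m$, so that $\mathfrak{h}\mathcal{K}=\lim_m \mathfrak{h}_m\mathcal{K}=\lim_k \mathfrak{h}_{kp-1}\mathcal{K}$. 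Hence the group controlling the exponent is assembled entirely from these block extensions, and the intermediate stages need not be treated separately. The $k=0$ block degenerates to $\mathfrak{h}_{p-1}\mathcal{K}\cong\bigoplus_{i=0}^{p-1}\mathcal{Z}_i\mathcal{K}$, which is already split by the first part, so only the blocks with $k\geq 1$ carry genuine obstructions, giving the stated reduction.

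The index bookkeeping and the identification of the two inverse limits via cofinality are routine. The main obstacle I anticipate lies not in this formal deduction but in the bridge preceding it: one must make precise that a compatible splitting of the tower really does bound the order of $[{\rm id}]$ in $[\Omega^2\Sigma Y,\Omega^2\Sigma Y]$, and then transfer this bound to the $p$-exponent of $\pi_\ast(\Sigma Y)$ through $g$. This is the step where the purely algebraic content of Proposition \ref{split} has to be matched to the geometric diagram, and where the precise meaning of \emph{obstruction to the exponent problem} must be pinned down.
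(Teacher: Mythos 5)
Your deduction is correct and is essentially the paper's own (the corollary is stated there without proof, as an immediate consequence of Proposition \ref{split} and the preceding discussion): the first claim is Proposition \ref{split} applied with the vacuous hypothesis $k=-1$, giving splittings for $m\leq p-1$, and the second is the block exact sequence with $m\mapsto kp-1$ together with cofinality of the subtower $\{\mathfrak{h}_{kp-1}\mathcal{K}\}$, the $k=0$ block being absorbed by the first part. Your closing caveat about what exactly \emph{obstruction to the exponent problem} bounds is fair, but that imprecision lies in the paper's setup rather than in the corollary's proof, which is purely the formal bookkeeping you carried out.
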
 
Now we turn to study some natural idempotents of abelian bi-$\Delta$-groups. Our way of constructing idempotents may be helpful for producing a functorial homotopy decomposition of $\Omega\Sigma Y$. 
\begin{proposition}
Given any abelian bi-$\Delta$-group $\mathcal{G}$ such that the sequence 
\[0\rightarrow \mathcal{Z}_m\mathcal{G}\stackrel{i_m}{\rightarrow} \mathfrak{h}_m\mathcal{G}\rightarrow \mathfrak{h}_{m-1}\mathcal{G}\rightarrow 0\]
splits for some $m$, then the composition
\[e_m^{(n)}: \mathfrak{h}_n\mathcal{G}\stackrel{p_{m,n}}{\longrightarrow} \mathfrak{h}_{m}\mathcal{G} \stackrel{\pi_m}{\longrightarrow} \mathcal{Z}_{m}\mathcal{G} \stackrel{i_m}{\hookrightarrow} \mathfrak{h}_{m}\stackrel{H_{m,n}}{\longrightarrow} \mathfrak{h}_{n\mathcal{G}}\]
is an idempotent for any $n\geq m$, i.e., $e_m^{(n)}\circ e_m^{(n)}=e_m^{(n)}$, where $\pi_m$ is a retraction such that $\pi_m\circ i_m={\rm id}$.
\end{proposition}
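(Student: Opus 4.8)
The plan is to strip $e_m^{(n)}=H_{m,n}\circ i_m\circ\pi_m\circ p_{m,n}$ down to a single identity on the Moore cycles. Writing out the square,
\[
e_m^{(n)}\circ e_m^{(n)}=H_{m,n}\circ i_m\circ\big(\pi_m\circ p_{m,n}\circ H_{m,n}\circ i_m\big)\circ\pi_m\circ p_{m,n},
\]
I see that it suffices to prove the inner composite
\[
\pi_m\circ p_{m,n}\circ H_{m,n}\circ i_m\colon \mathcal{Z}_m\mathcal{G}\longrightarrow \mathcal{Z}_m\mathcal{G}
\]
is the identity; then the outer factors reassemble to give back exactly $e_m^{(n)}$. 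Since the splitting hypothesis supplies $\pi_m\circ i_m=\mathrm{id}_{\mathcal{Z}_m\mathcal{G}}$, it is in turn enough to establish the cleaner statement that $p_{m,n}\circ H_{m,n}$ restricts to the inclusion on cycles, i.e.
\[
p_{m,n}\circ H_{m,n}\circ i_m=i_m\colon \mathcal{Z}_m\mathcal{G}\longrightarrow \mathfrak{h}_m\mathcal{G}.
\]

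I would prove this last identity by induction on $n-m$, the base case $n=m$ being immediate since $p_{m,m}$ and $H_{m,m}$ are both the identity. For the inductive step I factor $p_{m,n}=p_{m,n-1}\circ p_n$ and invoke the combinatorial relation $p_nH_{m,n}=H_{m,n-1}+H_{m-1,n-1}p_m$ from Lemma \ref{chainhom}. The point is that $\mathcal{Z}_m\mathcal{G}=\bigcap_i\ker(d_i)$ lies in the kernel of $p_m=d_{i\vert}$, so precomposing this relation with $i_m$ kills the second summand and yields $p_n\circ H_{m,n}\circ i_m=H_{m,n-1}\circ i_m$. Applying $p_{m,n-1}$ and using the inductive hypothesis then gives $p_{m,n}\circ H_{m,n}\circ i_m=p_{m,n-1}\circ H_{m,n-1}\circ i_m=i_m$, closing the induction.

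I do not expect a genuine obstacle here; the content is entirely in selecting the right identity to restrict. The one subtlety worth flagging is that the product formula $\sigma_{m+n}\cdots\sigma_{m+1}=n!\cdot H_{m,m+n}$ together with Claim $2$ in the proof of Proposition \ref{split} would only yield $n!\cdot(p_{m,n}\circ H_{m,n}-i_m)=0$ on $\mathcal{Z}_m\mathcal{G}$, which is insufficient in the presence of torsion. The argument above deliberately uses the finer recursion $p_nH_{m,n}=H_{m,n-1}+H_{m-1,n-1}p_m$ rather than the $\sigma$-products, so that the identity on cycles holds exactly, with no division by $n!$. Finally I would record that everything is legitimate because $\mathcal{G}$ is abelian, so that the $H_{m,n}$ are homomorphisms and the relations of Lemma \ref{chainhom} apply, and because the splitting assumption is precisely what provides the retraction $\pi_m$ with $\pi_m\circ i_m=\mathrm{id}$.
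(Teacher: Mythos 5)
Your proof is correct and takes essentially the same route as the paper: the paper's entire proof is the one line that the proposition ``follows from $p_{m,n}\circ H_{m,n}\circ i_m=i_m$,'' which is precisely the identity you isolate after peeling off the outer factors and using $\pi_m\circ i_m=\mathrm{id}$. You go further by actually verifying that identity by induction on $n-m$ via $p_nH_{m,n}=H_{m,n-1}+H_{m-1,n-1}p_m$ together with $p_m\circ i_m=0$ (since $\ker p_m=\mathcal{Z}_m\mathcal{G}$), a step the paper leaves implicit, and your observation that the $\sigma$-product formula would only give the identity up to a factor of $n!$ is a legitimate reason to prefer the finer recursion.
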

\begin{proof} The lemma follows from $p_{m,n}\circ H_{m,n}\circ i_m=i_m$.
\end{proof}
\begin{corollary}
Given any abelian bi-$\Delta$-group $\mathcal{G}$ such that $\mathfrak{h}_i\mathcal{G}=0$ for any $i\leq k$, and $\mathcal{G}$ is $p$-local for some prime $p$, then $e_m^{(n)}$ is an idempotent for any $n\geq m$ and $m\leq p+k$.
\end{corollary}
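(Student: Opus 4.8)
The plan is to recognize this corollary as a direct synthesis of the two immediately preceding results, requiring no genuinely new argument. The hypotheses imposed here---that $\mathcal{G}$ is an abelian bi-$\Delta$-group, that $\mathfrak{h}_i\mathcal{G}=0$ for all $i\leq k$, and that $\mathcal{G}$ is $p$-local---coincide verbatim with those of Proposition \ref{split}. So the first step is simply to invoke Proposition \ref{split}, which guarantees that for every index $m$ satisfying $m\leq p+k$ the short exact sequence
\[0\rightarrow \mathcal{Z}_m\mathcal{G}\stackrel{i_m}{\rightarrow} \mathfrak{h}_m\mathcal{G}\rightarrow \mathfrak{h}_{m-1}\mathcal{G}\rightarrow 0\]
splits.

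Second, I would feed this splitting into the Proposition immediately preceding the corollary. That proposition asserts precisely that whenever the displayed sequence splits at a given level $m$---so that a retraction $\pi_m$ with $\pi_m\circ i_m={\rm id}$ exists---the associated composition $e_m^{(n)}$ is idempotent for every $n\geq m$. Since Proposition \ref{split} supplies the required splitting for each $m\leq p+k$, the idempotency conclusion follows for exactly the range of $m$ claimed, and for all $n\geq m$.

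There is essentially no obstacle to overcome, since all the analytic content has already been discharged: the splitting lives in Proposition \ref{split}, whose proof rests on the James--Hopf identities of Lemma \ref{chainhom} together with the two numerical claims, while the idempotency itself reduces to the identity $p_{m,n}\circ H_{m,n}\circ i_m=i_m$ used in the preceding Proposition. The only point meriting a sentence of care is bookkeeping: one should confirm that the retraction $\pi_m$ appearing in the definition of $e_m^{(n)}$ is taken to be the splitting produced by Proposition \ref{split}, so that the hypothesis of the idempotency proposition is met at the correct level $m$. Once this matching of data is noted, the corollary is immediate.
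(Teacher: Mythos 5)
Your proposal is correct and matches the paper's (implicit) argument exactly: the corollary is stated without proof precisely because it is the immediate combination of Proposition \ref{split}, which supplies the splitting of $0\rightarrow \mathcal{Z}_m\mathcal{G}\rightarrow \mathfrak{h}_m\mathcal{G}\rightarrow \mathfrak{h}_{m-1}\mathcal{G}\rightarrow 0$ for $m\leq p+k$, with the preceding Proposition, which converts any such splitting into the idempotency of $e_m^{(n)}$ for $n\geq m$. Your remark about choosing $\pi_m$ to be the retraction furnished by that splitting is the right bookkeeping point and nothing further is needed.
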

\begin{corollary}
Given any abelian bi-$\Delta$-group $\mathcal{G}$ such that $\mathfrak{h}\mathcal{G}$ is completely splittable, which means the sequence 
\[0\rightarrow \mathcal{Z}_m\mathcal{G}\stackrel{i_m}{\rightarrow} \mathfrak{h}_m\mathcal{G}\rightarrow \mathfrak{h}_{m-1}\mathcal{G}\rightarrow 0\]
splits for each $m$, then the composition 
\[e_{m,n}: \mathfrak{h}\mathcal{G}  \stackrel{p_{n, \infty}}{\longrightarrow}\mathfrak{h}_n\mathcal{G}\stackrel{e_m^{(n)}}{\longrightarrow} \mathfrak{h}_n\mathcal{G}  \stackrel{i_{\infty, n}}{\longrightarrow}\mathfrak{h}\mathcal{G}\]
is an idempotent for any $n\geq m$, where $p_{n, \infty}i_{\infty, n}={\rm id}$.
\end{corollary}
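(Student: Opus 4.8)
The plan is to recognize $e_{m,n}$ as the conjugate of the idempotent $e_m^{(n)}$ from the preceding Proposition by the retraction pair $(i_{\infty,n}, p_{n,\infty})$, and to observe that such a conjugation preserves idempotency. First I would assemble the two inputs. By hypothesis $\mathfrak{h}\mathcal{G}$ is completely splittable, so in particular the short exact sequence $0\rightarrow \mathcal{Z}_m\mathcal{G}\rightarrow \mathfrak{h}_m\mathcal{G}\rightarrow \mathfrak{h}_{m-1}\mathcal{G}\rightarrow 0$ splits for the fixed $m$; together with the assumption that $\mathcal{G}$ is abelian, this is exactly the hypothesis of the previous Proposition, which therefore gives that $e_m^{(n)}\colon \mathfrak{h}_n\mathcal{G}\rightarrow \mathfrak{h}_n\mathcal{G}$ satisfies $e_m^{(n)}\circ e_m^{(n)}=e_m^{(n)}$ for every $n\geq m$. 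Second, I would invoke the retraction relation $p_{n,\infty}\circ i_{\infty,n}=\mathrm{id}_{\mathfrak{h}_n\mathcal{G}}$ supplied in the statement.

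With these two facts the verification is a direct cancellation. Unwinding the definition $e_{m,n}=i_{\infty,n}\circ e_m^{(n)}\circ p_{n,\infty}$ twice, the inner pair $p_{n,\infty}\circ i_{\infty,n}$ collapses to the identity, and the two surviving copies of $e_m^{(n)}$ merge by idempotency:
\[
e_{m,n}\circ e_{m,n}
= i_{\infty,n}\circ e_m^{(n)}\circ\bigl(p_{n,\infty}\circ i_{\infty,n}\bigr)\circ e_m^{(n)}\circ p_{n,\infty}
= i_{\infty,n}\circ\bigl(e_m^{(n)}\circ e_m^{(n)}\bigr)\circ p_{n,\infty}
= i_{\infty,n}\circ e_m^{(n)}\circ p_{n,\infty}
= e_{m,n}.
\]
This is the entire content of the idempotency claim, and it is purely formal once the retraction pair and the idempotency of $e_m^{(n)}$ are in place.

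The one point that I would treat as the genuine (though still routine) work, rather than the formal cancellation above, is the \emph{existence} of a section $i_{\infty,n}$ with $p_{n,\infty}\circ i_{\infty,n}=\mathrm{id}$, i.e.\ assembling the level-wise splittings into a single compatible retraction of the tower. Since every stage of the tower splits and all groups are abelian, I would argue that complete splittability yields a decomposition $\mathfrak{h}_n\mathcal{G}\cong\bigoplus_{j\le n}\mathcal{Z}_j\mathcal{G}$, whence $\mathfrak{h}\mathcal{G}=\lim_n\mathfrak{h}_n\mathcal{G}\cong\prod_j\mathcal{Z}_j\mathcal{G}$; under this identification $p_{n,\infty}$ is the canonical projection onto the first $n+1$ factors, and the inclusion of those factors furnishes the required $i_{\infty,n}$. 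I expect this compatibility of splittings across the inverse limit to be the main obstacle, modest as it is, whereas the idempotency itself follows immediately from the conjugation identity.
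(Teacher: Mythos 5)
Your proof is correct and matches the paper's intent: the paper offers no written proof for this corollary, treating it as the immediate formal consequence of the preceding Proposition together with the retraction identity $p_{n,\infty}\circ i_{\infty,n}=\mathrm{id}$, which is exactly the cancellation you carry out. Your remark that complete splittability is what furnishes the section $i_{\infty,n}$ of the tower (via $\mathfrak{h}_n\mathcal{G}\cong\bigoplus_{j\le n}\mathcal{Z}_j\mathcal{G}$) correctly identifies the only non-formal ingredient, which the paper likewise absorbs into the hypothesis.
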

To close the appendix, we construct some partially null homotopic self maps of double loop suspensions in terms of topological James-Hopf operations which are closely related to the combinatorial versions.
\begin{proposition}\label{lastnull}
For any map $f\in {\rm Sh}_{m}^{N}(\Sigma X; \Omega Z)+{\rm T}_{m}^{N}(\Sigma X; \Omega Z)\subseteq\mathcal{Z}_m\Omega Z^{\ast}(\Sigma X)$, the loop of the composition
\[\Omega\Sigma^2X \stackrel{H_{m+1}}{\longrightarrow} \Omega\Sigma\big((\Sigma X)^{\wedge(m+1)}\big) \stackrel{J(f)}{\longrightarrow}\Omega Z\]
is null homotopic on the natural $(m+1)$-th filtration $\mathcal{F}_{m+1}(X)$ of $\Omega^2\Sigma^2X\simeq \mathcal{F}(X)$, where $H_{m+1}$ is the James-Hopf invariant.
\end{proposition}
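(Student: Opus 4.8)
The plan is to dualize the statement via the loop--suspension adjunction and then transport the problem onto the combinatorial model $\mathcal{F}(X)$ through the evaluation map constructed in Section \ref{evaluation}. Write $g = J(f)\circ H_{m+1}\colon \Omega\Sigma^2 X\to \Omega Z$. By the identification $\mathcal{F}(X)\simeq \Omega^2\Sigma^2 X$ of Theorem \ref{Milgram}, together with the fact that ${\rm ev}\colon \Sigma\mathcal{F}(X)\to J(\Sigma X)$ is by construction the adjoint of this equivalence (i.e.\ the counit), the map $\Omega g$ restricted to the filtration $\mathcal{F}_{m+1}(X)$ is null homotopic if and only if its adjoint
\[
\Sigma\mathcal{F}_{m+1}(X)\xrightarrow{\ {\rm ev}_{m+1}\ } J_{m+1}(\Sigma X)\hookrightarrow J(\Sigma X)\xrightarrow{\ g\ }\Omega Z
\]
is null homotopic. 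So first I would reduce the proposition to the vanishing of this composite, which involves only the $(m+1)$-st stage of the filtration.

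Next I would analyze $g$ on the James filtration. The key input is the classical behaviour of the James--Hopf invariant on words: on $J_{m+1}(\Sigma X)$ the map $H_{m+1}$ sends a reduced word to the product of its length-$(m+1)$ subwords, so it annihilates $J_m(\Sigma X)$ and, on $J_{m+1}(\Sigma X)$, lands in the bottom cell $(\Sigma X)^{\wedge(m+1)}\subset J\big((\Sigma X)^{\wedge(m+1)}\big)$, where it agrees with the James quotient $q_{m+1}\colon J_{m+1}(\Sigma X)\twoheadrightarrow J_{m+1}(\Sigma X)/J_m(\Sigma X)\cong (\Sigma X)^{\wedge(m+1)}$. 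Since $J(f)$ restricts to $f$ on that bottom cell, I obtain a homotopy
\[
g\big|_{J_{m+1}(\Sigma X)}\ \simeq\ f\circ q_{m+1}.
\]
I would record this as a short preparatory lemma; note that only $J(f)|_{(\Sigma X)^{\wedge(m+1)}}=f$ is used, not the full multiplicative structure of $J(f)$.

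It then remains to combine this with the factorization of the evaluation map from Lemma \ref{mainlemma}. Using the commuting square relating ${\rm ev}_{m+1}$ and $\bar{{\rm ev}}_{m+1}$ from Section \ref{evaluation}, namely $q_{m+1}\circ {\rm ev}_{m+1}=\bar{{\rm ev}}_{m+1}\circ p$ with $p\colon \Sigma\mathcal{F}_{m+1}(X)\twoheadrightarrow \Sigma D_{m+1}(X)$ the natural quotient, the composite above becomes
\[
\Sigma\mathcal{F}_{m+1}(X)\xrightarrow{\ p\ }\Sigma D_{m+1}(X)\xrightarrow{\ \bar{{\rm ev}}_{m+1}\ }(\Sigma X)^{\wedge(m+1)}\xrightarrow{\ f\ }\Omega Z,
\]
so it suffices to show $\bar{{\rm ev}}_{m+1}^\ast(f)=0$. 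Because $\bar{{\rm ev}}_{m+1}^\ast$ is a homomorphism for the abelian group structure on $[(\Sigma X)^{\times(m+1)},\Omega Z]$, and such a homomorphism annihilates a set of generators if and only if it annihilates the whole normal closure, it is enough to treat the two summands separately. For $f\in {\rm Sh}^N_m(\Sigma X;\Omega Z)$ the vanishing is Corollary \ref{newnull0} with $k=m+1$, and for $f\in {\rm T}^N_m(\Sigma X;\Omega Z)$ it is Lemma \ref{2ndnull} applied to the relevant $h\in {\rm Indet}_{m+1}$. Adding the two contributions gives $\bar{{\rm ev}}_{m+1}^\ast(f)=0$, and hence the desired null homotopy.

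The main obstacle is the preparatory lemma of the second paragraph: one must pin down the James--Hopf invariant on the filtration $J_{m+1}(\Sigma X)$ precisely enough to produce an honest homotopy $g|_{J_{m+1}}\simeq f\circ q_{m+1}$, which requires a point-set model of $H_{m+1}$ (for instance the combinatorial James--Hopf map) whose restriction to words of length $\le m+1$ is literally the quotient onto the top smash factor. Everything else is formal: the adjunction identification of the first step is built into the very definition of ${\rm ev}$, and the last two steps are a direct application of results already established.
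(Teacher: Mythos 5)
Your proposal is correct and follows essentially the same route as the paper's (very terse) proof: the paper simply cites Corollary \ref{newnull0}, Lemma \ref{2ndnull}, and the fact from \cite{Wu} that $J(f)\circ H_{m+1}$ represents the combinatorial James--Hopf operation $H_m(f)$, which is exactly the content of your preparatory lemma identifying $g|_{J_{m+1}(\Sigma X)}$ with $f\circ q_{m+1}$. Your write-up just makes explicit the adjunction step and the factorization through $\bar{{\rm ev}}_{m+1}$ that the paper leaves implicit.
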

\begin{proof}
The proposition follows from Lemma \ref{newnull0} and \ref{2ndnull} and the fact that (\cite{Wu}) 
\[H_m(f):\Omega\Sigma^2X \rightarrow \Omega Z\]
is represented by $J(f)\circ H_{m+1}$.
\end{proof}

\noindent\textbf{Acknowledgements.} 
We want to thank Stephen Theriault most warmly for proofreading this paper. We are also indebted to the referee for his/her careful reading of our manuscript, which has improved the exposition of this paper.


\begin{thebibliography}{10}
\bibitem{Barratt} M. G. Barratt and P. J. Eccles, {\em $\Gamma^{+}$-structures \MyRoman{1}: A free group functor for stable homotopy theory}, Topology \textbf{13} (1974) 25-45.

\bibitem{Berger} C. Berger, {\em Cellular structures for $E_n$-operads}, Workshop on Operads, Osnabr\"{u}ck, (1998), pp. 4-22.

\bibitem{BZ} P. V. Blagojevi\`{c} and G. M. Ziegler, {\em Convex equipartitions via equivariant obstruction theory}, Israel J. Math. \textbf{200} (2014), 49-77.

\bibitem{Cohen86} F. R. Cohen, {\em A course in some aspects of classic homotopy theory}, in: Lecture Notes in Math., \textbf{1286} (1986), Springer, Berlin, 1-92.

\bibitem{Cohen95} F. R. Cohen, {\em On combinatorial group theory in homotopy}, Contemp. Math. \textbf{188} (1995), 57-63.

\bibitem{CMT} F. R. Cohen, J. P. May and L. R. Taylor, {\em Splitting of certain space $CX$}, Math. Proc. Camb. Phil. Soc. \textbf{84} (1978), 465-496.  

\bibitem{CMW} F. R. Cohen, R. Mikhailov and J. Wu, {\em A combinatorial approach to the exponents of Moore spaces}, to appear in Math. Z.. 

\bibitem{JCohen} J. M. Cohen, {\em The decomposition of stable homotopy}, Ann. of Math. (2) \textbf{87} (1968), 305-320.

\bibitem{James} I. M. James, {\em Reduced product spaces}, Ann. of Math. (2) \textbf{62} (1955), 170-197.

\bibitem{May} J. P. May, {\em The geometry of iterated loop spaces}, Lecture Notes in Math. \textbf{271}, Springer Verlag (1972).

\bibitem{MW} R. Mikhailov and J. Wu, {\em On the metastable homotopy of ${\rm mod}~2$ Moore spaces}, Algebr. Geom. Topol. \textbf{16} (2106), 1773-1797. 

\bibitem{Milgram} R. J. Milgram, {\em Iterated loop spaces}, Ann. Math. \textbf{84} (1966), 386-403.

\bibitem{Milgram74} R. J. Milgram, {\em Unstable homotopy from the stable point of view}, Lecture Notes in Math. \textbf{368}, Springer Verlag (1974).

\bibitem{KZ} R. M. Kaufmann and Y. Zhang, {\em Permutohedral structures of $E_2$ operads}, Forum Math., online ahead of print.

\bibitem{Shipley} B. Shipley, {\em An algebraic model for rational $S^1$-equivariant stable homotopy theory}, Q. J. Math. \textbf{53} (2002), no. 1, 87-110.

\bibitem{Smith} J. H. Smith, {\em Simplicial group models for $\Omega^n\Sigma^n X$}, Israel J. Math. \textbf{66} (1989), 330-350.

\bibitem{Spanier} E. H. Spanier, {\em Higher order operations}, Trans. Amer. Math. Soc. \textbf{109} (1963), 509-539.

\bibitem{Toda} H. Toda, {\em Composition methods in the homotopy groups of spheres}, Adv. in Math. Study \textbf{49}, Princeton U. Press, Princeton (1962).

\bibitem{Selick} P. S. Selick and J. Wu, {\em On natural coalgebra decompositions of tensor algebras and loop susupensions}, Memoirs AMS \textbf{148} (2000), No. 701.

\bibitem{Wu2003} J. Wu, {\em Homotopy theory of the suspensions of the projective plane}, Memoirs AMS \textbf{162} (2003), No. 769.

\bibitem{Wu} J. Wu, {\em On maps from loop suspensions to loops spaces and the shuffle relations on the Cohen groups}, Memoirs AMS \textbf{180} (2006), No. 851.


\end{thebibliography}
\end{document}